\definecolor{mygreen}{RGB}{46,139,87}
\newcommand{\AddresseLAGA}{{
  \bigskip
  \footnote{
  LAGA, Universit\'e Paris 13, Villetaneuse, France}}}
  \newcommand{\AddressePOEMS}{{
    \bigskip
    \footnote{
    POEMS (CNRS-ENSTA Paristech-INRIA, Universit\'e Paris-Saclay), 828 Boulevard des Mar\'echaux, Palaiseau, France}}}
\newcommand{\Corr}{\footnote{
{Corresponding author. }
  \textit{E-mail address}: \texttt{sonia.fliss@ensta-paristech.fr}
}
}
\newtheorem{prop}{Proposition}
\newtheorem{thm}{Theorem}
\newtheorem{lemme}{Lemma}
\newtheorem{rem}{Remark}
\newtheorem{corollaire}{Corollary}
\def\N{\mathbb{N}}
\def\Z{\mathbb{Z}}
\def\R{\mathbb{R}}
\def\eps{\varepsilon}
\def\dsp{\displaystyle}
\begin{document}
	\title{Trapped modes in thin and infinite ladder like domains. Part 1 : existence results}
\author{B\'erang\`ere Delourme\AddresseLAGA, Sonia Fliss\AddressePOEMS\Corr, Patrick Joly\AddressePOEMS, Elizaveta Vasilevskaya\AddresseLAGA}
	\maketitle
\abstract{
\noindent The present paper deals with the wave propagation in a particular two dimensional structure, obtained from a localized perturbation of a reference periodic medium. This reference medium is a ladder like domain, namely a thin periodic structure (the thickness being characterized by a small parameter $\eps > 0$) whose limit (as $\eps$ tends to 0) is a periodic graph. The localized perturbation consists in changing the geometry of the reference medium by modifying the thickness of one rung of the ladder. Considering the scalar Helmholtz equation with Neumann boundary conditions in this domain,  we wonder whether such a geometrical perturbation is able to produce localized eigenmodes.  To address this question, we use a standard approach of asymptotic analysis that consists of three main steps. We first find the formal limit of the eigenvalue problem as the $\varepsilon$ tends to 0. In the present case, it corresponds to an eigenvalue problem for a second order differential operator defined along the periodic graph. Then, we proceed to an explicit calculation of the spectrum of the limit operator. Finally, we prove that the spectrum of the initial operator is close to the spectrum of the limit operator.  In particular, we prove the existence of localized modes provided that the geometrical perturbation consists in diminishing the width of one rung of the periodic thin structure. Moreover, in that case, it is possible to create as many eigenvalues as one wants, provided that $\varepsilon$ is small enough. Numerical experiments illustrate the theoretical results.

~\\\\\textit{Keywords:
	spectral theory, periodic media, quantum
  graphs, trapped modes
}
\section{Introduction}
Photonic crystals, also known as electromagnetic bandgap metamaterials,
are 2D or 3D periodic media designed to control the light
propagation. Indeed, the multiple scattering resulting from the periodicity of
the material can give rise to destructive interferences at some range
of frequencies.  It follows that there
might exist intervals of frequencies (called gaps) wherein the monochromatic waves
cannot propagate. At the same time, a local perturbation of the
crystal can produce  defect mid-gap modes, that is to say
solutions to the homogeneous time-harmonic wave equation, at a fixed frequency located inside one gap, 
that remains strongly localized in the vicinity of the
perturbation. This localization phenomenon is of particular interest
for a variety of promising
applications in optics, for instance the design of highly efficient waveguides \cite{Joannopoulos:1995,Johnson:2002}. \\
~\\\\
\noindent From a mathematical point of view, the presence of gaps is theoretically
explained by the band-gap structure of the spectrum of the periodic partial
differential operator associated with the wave propagation in such
materials (see for instance \cite{Eastham:1973,
  Kuchment:1993}).  In turn, the localization effect is directly linked to the
possible presence of discrete spectrum appearing when perturbing the
perfectly periodic operator. A thorough mathematical description of
photonic crystals can be found in \cite{Kuchment:2001}. Without being exhaustive, let us
remind the reader about a few important results on the topic.
In the one dimensional case, it is
well-known \cite{Borg:1946} that a periodic
material has infinitely many gaps unless it is constant. By contrast,
in 2D and 3D, a periodic medium might or might not have
gaps. Nevertheless, several configurations where at least one gap
do exist can be
found in \cite{Figotin:1996a,Figotin:1996b,HoangPlumWieners2009,Nazarov:2010a,Nazarov:2012a,NazarovBakharevRuotsalainen:2013,Khrabustovskyi2014,KhrabustovskyiKhruslov2015} and
references therein. In
any case, except in dimension one, the number of gaps is expected to
be finite. This statement, known as the Bethe Sommerfeld conjecture is
fully demonstrated in 
\cite{Parnovski:2008,Parnovski:2010} for the periodic Schrödinger operator
but is still partially open for Maxwell equations (see
\cite{Vorobets:2011}). For the localization effect, \cite{Figotin:1997,Figotin:1998b,Ammari:2004,Kuchment:2003,KuchmentOng:2010,NazarovLadder2014, BrownHoangPlumWood2014,BrownHoangPlumWood2015}
several papers exhibit situations where a compact (resp. lineic)  perturbation of a
periodic medium give rise to localized (resp. guided) modes. It seems that the first results concern strong material perturbations : for local perturbation \cite{Figotin:1997,Figotin:1998b} and for lineic perturbation \cite{Kuchment:2003,KuchmentOng:2010}. There exist fewer results about weak material perturbations : \cite{BrownHoangPlumWood2014,BrownHoangPlumWood2015} deal with 2D lineic perturbations. Finally, geometrical perturbations are considered {{in}}~\cite{NazarovLadder2012,NazarovLadder2014}, { where the geometrical domain under investigation is exactly the same as ours but with homogeneous Dirichlet boundary conditions on the boundary of the ladder.} 
As in our case,  changing the size of one or several rungs of the ladder can create eigenvalues inside a gap {(see also remark~\ref{remarkDirichletLadder})}. 
~\\\\
\noindent The aim of this paper is to complement the references
mentioned above by proving the existence of localized midgap
modes created by a geometrical perturbation of a particular periodic medium. We
shall use a standard approach of analysis (used in
\cite{Figotin:1996a,Nazarov:2012a}) that consists in
comparing the periodic medium
with a reference one, for which
theoretical results are available. To be more specific, we are interested in the
Laplace operator with Neumann boundary condition
in a ladder-like periodic
waveguide.  As the thickness of the rungs (proportional to a small parameter $\varepsilon$)  tends to zeros, the domain
shrinks to an (infinite) periodic graph. More precisely, the spectrum of the
operator posed on the 2D domain tends to the spectrum of a
self-adjoint operator posed on the limit graph
(\cite{SchatzmannRubinstein:2001,KuchmentZeng:2001,Saito:2000,Post:2006,panasenko2007}). 
This limit operator consists of the second order derivative operator on
each edge of the graph together with transmission conditions (called
Kirchhoff conditions) at
its vertices (\cite{Exner:1996,carlson:1998,KuchmentZeng:2001}). As
opposed to the initial operator, the spectrum of the limit operator
can be explicitly determined
using a finite difference scheme (\cite{AvishaiLuck:1992,Exner:1995}). From a mode of the limit operator, we construct a so called {\it quasi-mode} and we are able to
prove that, for $\varepsilon$ sufficiently small, the diminution of the thickness of one rung of the ladder gives rise to
a localized mode. Moreover, diminushing $\varepsilon$, it is possible to create as many eigenvalues as one wants. We point out that the analysis of quantum graphs has been a very
active research area for the last three decades and we refer the reader to
the surveys
\cite{KuchmentQuantumgraphs1,KuchmentQuantumgraphs2,KuchmentQuantumSurvey}
as well as the books \cite{KuchmentBookGraph,PostBook} for an overview and an exhaustive bibliography of
this field.

\section{Presentation of the problem}
In the present work we study the propagation of waves in a ladder-like
periodic medium (see figure \ref{bande}). The homogeneous domain
$\Omega_{\varepsilon}$ (we will call it ladder) consists of the
infinite band of height $L$ minus an infinite set of equispaced
rectangular obstacles. The domain is $1$-periodic in one space
direction, corresponding to the variable $x$. The distance between two
consecutive obstacles is equal to the distance between the obstacles and
the boundary of the band and is denoted by $\varepsilon$.  

\begin{figure}[htbp]
	\begin{center}
	\subfloat[The
  unperturbed periodic ladder]{\label{bande}\includegraphics[width=0.5\textwidth, trim = 4cm 6cm 3cm 7cm, clip]{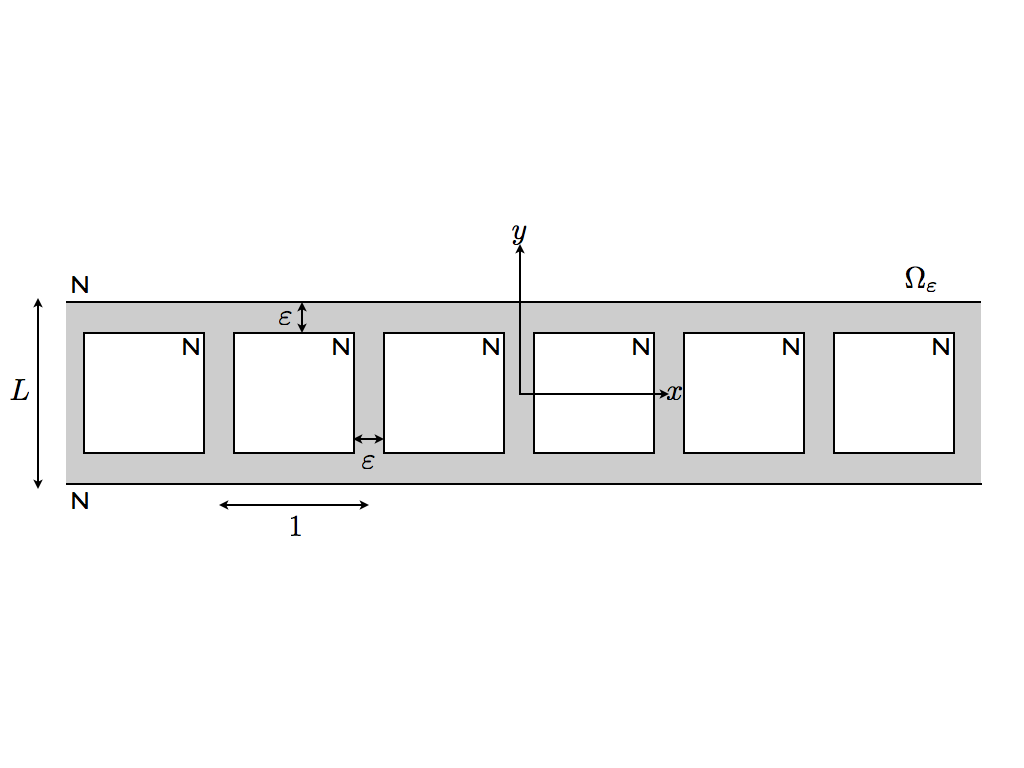}}\\
	\subfloat[The
  perturbed ladder]{\label{bande_mu}\includegraphics[width=0.5\textwidth, trim = 4cm 6cm 3cm 7cm, clip]{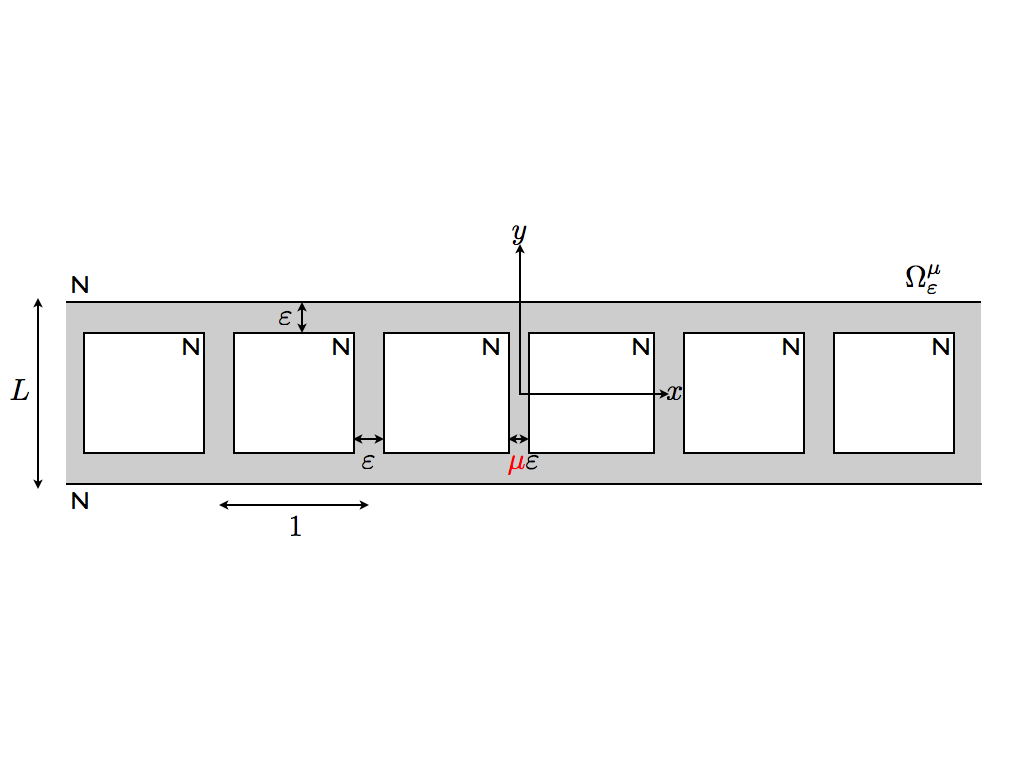}}	
  \caption{The unperturbed and the perturbed periodic ladders}   
\end{center}\end{figure}

\begin{rem}{Some extensions}
	We can change the distance between 2 consecutive obstacles from $\varepsilon$ to $\nu\varepsilon$. The study will depend on this new parameter but its conclusions remain the same.
	\end{rem}

The aim of this work is to find localized modes, that is solutions of the homogeneous scalar wave equation with Neumann boundary condition
\begin{equation}
\label{wave}
\frac{\partial^2v}{\partial{t}^2} -\Delta v = 0,\qquad \partial_n v=0\quad\text{on}\quad\partial\Omega_{\varepsilon},
\end{equation}
which are confined in the $x$-direction. 
\\[12pt]
Without giving a strict mathematical formulation (this will be done in the following section) a localized mode can be understood as a solution of the wave equation (\ref{wave}), which is harmonic in time
\begin{equation}
\label{wavesol1}
v(x,y,t)=u(x,y) \; e^{i\omega t},\qquad v\in L_2(\Omega_{\varepsilon}).
\end{equation}
where the function $u$ (which does not depend on time) belongs to $L_2$. The factor $e^{i\omega t}$ shows the harmonic dependence on time. Injecting (\ref{wavesol1}) into (\ref{wave}) leads to the following problem for the function $u$:
\begin{equation}
\label{eigproblem}
\begin{cases}
-\Delta u=\omega^2u\,&\text{ in }\Omega_{\varepsilon} \; ,\\
\displaystyle\partial_n u =0 & \text{ in }\partial \Omega_{\varepsilon}.
\end{cases}
\end{equation}
Problem (\ref{eigproblem}) is an eigenvalue problem posed in the
unbounded domain $\Omega_{\varepsilon}$. In order to create
eigenvalues, we introduce a local
perturbation in this perfectly periodic domain (the delicate question of existence of eigenvalues (or flat bands) for the unperturbed problem is not addressed in this paper, see for the absence of flat bands in waveguide problems (or the absolute continuity of the spectrum) for instance \cite{Sobolev:2002,friedlander:2003,Suslina:2002} and for the existence \cite{Filonov2001Absolute} ). The perturbed domain is obtained by changing the
distance between two consecutive obstacles from $\varepsilon$ to
$\mu\varepsilon$, $\mu>0$ (see
Figure \ref{bande_mu} in the case where $\mu< 1$). It
corresponds to modify the width of one vertical rung of the ladder
from $|x|< \varepsilon/2$ to $|x|< \mu \varepsilon/2$. 

As we will see such a perturbation does not change the continuous
spectrum of the underlying operator but it can introduce a non-empty
discrete spectrum. Our aim is to find eigenvalues by playing with the
values of 
$\mu$ and $\varepsilon$, $\varepsilon$ being treated as a small parameter.
\begin{rem}
	Imitating the approach developped in this article, it is also possible to study sufficient conditions which ensures the existence of guided modes in a ladder-like open periodic waveguides. More precisely, the domain is $\R^2$ minus an infinite set of equispaced perfect conductor rectangular obstacles with Neumann boundary conditions. And this domain is perturbed by a lineic defect, by changing the distance between two consecutive columns of obstacles. There exists a guided mode with a $\beta$ wave number if and only if there exists a localized mode in a perturbed periodic ladder where $\beta$-boundary conditions are imposed. The results of the present paper can be extended and the sufficient condition which ensures the existence of guided modes remains basically the same. 
	\end{rem}

\section{Mathematical formulation of the problem}
This section describes a mathematical framework for the analysis of the
spectral problem formulated above. We introduce the
operator $A_{\varepsilon}^{\mu}$, acting in the space
$L_2(\Omega_{\varepsilon}^{\mu})$, associated with the eigenvalue
problem (\ref{eigproblem}) in the perturbed domain:
\begin{equation*}
A^{{\mu}}_{\varepsilon}u=-\Delta u,\qquad
D(A^{{\mu}}_{\varepsilon})=\left\{u\in H^1_{\Delta}(\Omega_{\varepsilon}^{\mu}),\quad\partial_n u|_{\partial\Omega_{\varepsilon}^{\mu}}=0\right\}.
\end{equation*}
Here $\displaystyle H^1_{\Delta}(\Omega_{\varepsilon}^{\mu})=\left\{u\in H^1(\Omega_{\varepsilon}^{\mu}),\quad\Delta u\in L_2(\Omega_{\varepsilon}^{\mu})\right\}$.
The operator $A_{\varepsilon}^{\mu}$ is self-adjoint and positive. Our goal is to characterize its spectrum and, more precisely, to find sufficient conditions which ensures the existence of eigenvalues.  
\subsection{The essential spectrum of
  $A^{\mu}_{\varepsilon}$} To determine the essential spectrum of the
operator $A^{{\mu}}_{\varepsilon}$, we consider the case $\mu=1$,
where the domain $\Omega_{\varepsilon}$ is perfectly periodic (see Figure
\ref{bande}). We will denote the corresponding operator
$A_{\varepsilon}$. The Floquet-Bloch theory shows that the spectrum of periodic elliptic operators is reduced to its essential spectrum and  has a band-gap structure~\cite{Eastham:1973,Reed:1972,
  Kuchment:1993}:
\begin{equation}
\label{spectrum_intervals_general}
\sigma(A_{\varepsilon})=\sigma_{ess}(A_{\varepsilon})=\mathbb{R}\setminus\bigcup\limits_{1\leqslant n\leqslant N}]a_n,b_n[,
\end{equation}
where, in the previous formula, the union disappears if $N=0$.
For $N>0$, the intervals $]a_n,b_n[$ are called spectral gaps. Their number $N$
is conjectured to be finite (Bethe-Sommerfeld, 1933, \cite{Parnovski:2008,Parnovski:2010,Vorobets:2011}). The band-gap structure of the spectrum is a consequence of the following result given by the Floquet-Bloch theory:
\begin{equation}
\label{spectrum_union_general}
\sigma(A_{\varepsilon})=\bigcup\limits_{\theta\in[-\pi,\pi[}\sigma\left(A_{\varepsilon}(\theta)\right).
\end{equation}
Here $A_{\varepsilon}(\theta)$ is the Laplace operator defined on the
periodicity cell
$\mathcal{C}_{\varepsilon}=\Omega_{\varepsilon}\cap\big\{x\in(-1/2,1/2
\, )\big\}$ (see Figure \ref{cell}) with $\theta$-quasiperiodic
boundary conditions on the lateral boundaries $\Gamma_\varepsilon^\pm
= \partial \mathcal{C}_{\varepsilon} \cap \{x = \pm 1/2\}$ and
homogeneous Neumann boundary conditions on the remaining part
$\Gamma_\varepsilon = \partial \mathcal{C}_{\varepsilon} \, \setminus
(\Gamma_\varepsilon^- \cup \Gamma_\varepsilon^+ )$ of the boundary: for $\theta\in[-\pi,\pi]$,
\begin{align*}
&A_{\varepsilon}(\theta)\,:\;L_2(\mathcal{C}_{\varepsilon})\longrightarrow L_2(\mathcal{C}_{\varepsilon}),\qquad A_{\varepsilon}(\theta)u=-\Delta u,\\
&D\big(A_{\varepsilon}(\theta)\big)=\left\{u\in
  H^1_{\Delta}(\mathcal{C}_{\varepsilon}) ,\quad\partial_n
  u|_{\Gamma_\varepsilon}=0, \quad  u|_{\Gamma_\varepsilon^+}=e^{-i\theta} \; u|_{\Gamma_\varepsilon^-},\quad\partial_x u|_{\Gamma_\varepsilon^+}=e^{-i\theta} \; \partial_x u|_{\Gamma_\varepsilon^-}\right\}.
\end{align*}
\begin{center}
\begin{figure}[h]
\begin{center}
\includegraphics[width=0.4\textwidth, trim = 10cm 8cm 10cm 10cm,clip]{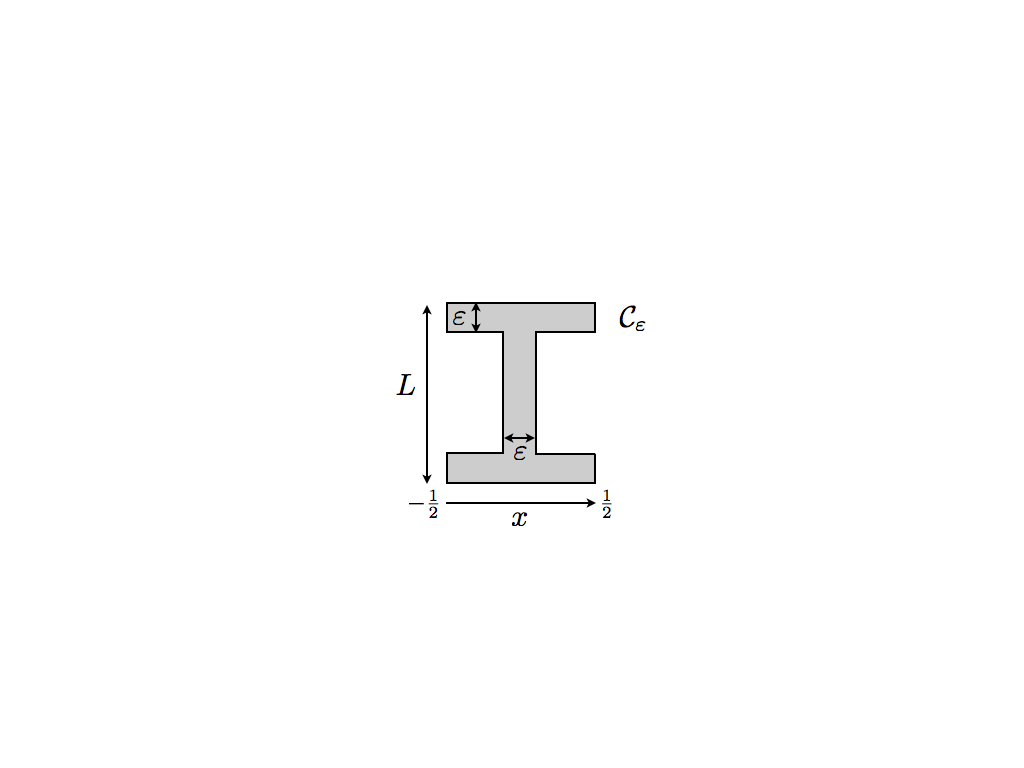}\caption{\label{cell}Periodicity cell}
\end{center}       
\end{figure}
\end{center}
For each $\theta\in[-\pi,\pi[$ the operator $A_{\varepsilon}(\theta)$
is self-adjoint, positive and its resolvent is compact. Its spectrum is then a sequence of non-negative eigenvalues of finite multiplicity tending to infinity:
\begin{equation}
\label{eigs}
0\leqslant\lambda^{(1)}_\varepsilon(\theta)\leqslant\lambda^{(2)}_\varepsilon(\theta)\leqslant\dots\leqslant\lambda^{(n)}_\varepsilon(\theta)\leqslant\dots,\qquad \lim\limits_{n\rightarrow\infty}\lambda^{(n)}_\varepsilon(\theta)=+\infty.
\end{equation} 
In (\ref{eigs}) the eigenvalues are repeated with their
multiplicity. The representative curves of the functions $\theta\mapsto\lambda_n(\varepsilon,\theta)$ are called dispersion curves and are known to be continuous and non-constant (cf. Theorem
XIII.86, volume IV in \cite{Reed:1972}). The fact that the dispersion curves are non-constant implies that the operator $A_{\varepsilon}$ has no eigenvalues of infinite multiplicity. Finally, (\ref{spectrum_union_general}) can be rewritten as
\begin{equation*}
\sigma(A_{\varepsilon})=\bigcup\limits_{n\in\mathbb{N}}\lambda^{(n)}_\varepsilon\big([-\pi,\pi]\big),
\end{equation*}
which gives (\ref{spectrum_intervals_general}). The conjecture of
Bethe-Sommerfeld means that for $n$ large enough the intervals
$\lambda_n\big(\varepsilon,[-\pi,\pi]\big)$ overlap or only touch. \\

\noindent Since
$D(A_{\varepsilon}(-\theta))=\overline{D(A_{\varepsilon}(\theta))}$
and the operators $A_\varepsilon$ have real coefficients, the function
$\lambda_n(\theta)$ are even.  Thus, it is sufficient to consider
$\theta\in[0,\pi]$ in (\ref{spectrum_union_general}).  This will be
used systematically in the rest of the paper.\\

 \noindent {As expected (this is related to Weyl's Theorem, see  in
 \cite[Chapter 13, Volume 4]{Reed:1972}, \cite[Chapter 9]{BirmanSolomjakBook} and
 \cite[Theorem 1]{Figotin:1997}), the essential spectrum is stable under a perturbation of the thickness
 of one rung of the ladder. In the present case, the domains of definition of the resolvents of  $A_{\varepsilon}^{\mu}$  and $A_{\varepsilon}^{\mu}$ are not the same. As a result, we cannot directly apply the standard results (in \cite[Chapter 9]{BirmanSolomjakBook}).}
\begin{prop}
\label{essential_spectrum_same}
$\sigma_{ess}(A_{\varepsilon}^{\mu})=\sigma_{ess}(A_{\varepsilon})$.
\end{prop}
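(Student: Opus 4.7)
Since $\Omega_\varepsilon$ and $\Omega_\varepsilon^\mu$ coincide outside a bounded region $K$ containing the perturbed rung, and the two operators act by the same differential expression with the same Neumann condition on that common part, the natural route is Weyl's criterion: $\lambda \in \sigma_{ess}(A_\varepsilon^\mu)$ if and only if there exists a singular sequence, i.e. a family $(u_n) \subset D(A_\varepsilon^\mu)$ with $\|u_n\|_{L_2}=1$, $u_n \rightharpoonup 0$ weakly in $L_2$, and $(A_\varepsilon^\mu-\lambda)u_n \to 0$ in $L_2$. The same criterion holds for $A_\varepsilon$. The plan is to transfer any singular sequence for one operator into a singular sequence for the other by cutting it off away from $K$; the two inclusions are symmetric so I describe only $\sigma_{ess}(A_\varepsilon^\mu) \subset \sigma_{ess}(A_\varepsilon)$.

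\textbf{Cutoff argument.} Pick $R>0$ such that $K \subset \{|x|<R\}$ and a smooth cutoff $\chi$ vanishing on $\{|x|\leq R\}$ and equal to $1$ on $\{|x|\geq R+1\}$. Given a singular sequence $(u_n)$ for $A_\varepsilon^\mu$ at $\lambda$, set $v_n := \chi\, u_n$. Since $v_n$ vanishes on a neighborhood of $K$, its extension by zero belongs to $D(A_\varepsilon)$ and satisfies the correct Neumann condition on $\partial\Omega_\varepsilon$. The normalization $\|v_n\|\to 1$ and the weak convergence $v_n \rightharpoonup 0$ in $L_2(\Omega_\varepsilon)$ are immediate: since $(1-\chi)$ is compactly supported, Rellich's theorem gives $(1-\chi)u_n \to 0$ strongly in $L_2$. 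For the residual, one expands
\[
(-\Delta-\lambda)\,v_n \;=\; \chi\,(-\Delta-\lambda)\,u_n \;-\; 2\,\nabla\chi\cdot\nabla u_n \;-\; (\Delta\chi)\,u_n .
\]
The first term tends to $0$ in $L_2$ by hypothesis; the third tends to $0$ in $L_2$ by Rellich applied on $\mathrm{supp}\,\Delta\chi$. Everything reduces to controlling the commutator term $\nabla\chi\cdot\nabla u_n$ in $L_2$.

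\textbf{The delicate point and main obstacle.} The crux is to prove that $\nabla u_n \to 0$ strongly in $L_2$ on the support of $\nabla \chi$. For this I would combine a localized energy estimate with Rellich's theorem. Choosing a smooth cutoff $\psi$ equal to $1$ on $\mathrm{supp}\,\nabla\chi$ and compactly supported in a slightly larger region, I test $(-\Delta-\lambda)u_n=:f_n$ against $\psi^2 u_n$ and integrate by parts on $\Omega_\varepsilon^\mu$; all boundary contributions vanish thanks to the Neumann condition $\partial_n u_n=0$, so the reentrant corners of the rectangular obstacles cause no trouble and only a Lipschitz regularity of the boundary is needed. Using $u_n$ bounded in $L_2$ and $f_n \to 0$, one obtains that $(u_n)$ is bounded in $H^1(\mathrm{supp}\,\psi)$, hence $u_n \to 0$ strongly in $L_2$ on $\mathrm{supp}\,\psi$ by Rellich. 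A second integration by parts of the form
\[
\int \psi\,|\nabla u_n|^2 \;=\; -\int \psi\,u_n\,\Delta u_n \;-\; \int u_n\, \nabla\psi \cdot \nabla u_n
\]
combined with the previous bounds (each term on the right being the product of a sequence tending to $0$ in $L_2$ and a bounded one) yields $\nabla u_n \to 0$ in $L_2(\mathrm{supp}\,\nabla\chi)$. This closes the argument and, together with the symmetric construction from $A_\varepsilon$ to $A_\varepsilon^\mu$, proves the proposition. The main difficulty is entirely concentrated in this commutator estimate, since the explicit expression of $D(A_\varepsilon^\mu)$ via $H^1_\Delta$ does not \emph{a priori} yield $H^1_{loc}$ bounds on singular sequences.
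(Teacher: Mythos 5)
Your proposal follows essentially the same route as the paper: transfer of Weyl singular sequences by a cutoff supported away from the perturbed rung, Rellich's theorem to kill the zero-order commutator term, and an integration by parts (exploiting the $L_2$-boundedness of $\Delta u_n$ and the strong local $L_2$-convergence of $u_n$ to $0$) to control $\nabla\chi\cdot\nabla u_n$; the paper gets the $H^1$ bound globally from the form identity $\|\nabla u_n\|^2=(A u_n-\lambda u_n,u_n)+\lambda\|u_n\|^2$ rather than from your localized test against $\psi^2 u_n$, but that difference is cosmetic. The one claim you assert without justification is that $\chi u_n$ satisfies the Neumann condition on $\partial\Omega_\varepsilon$: for a cutoff of the form you describe this fails on the vertical sides of the rectangular obstacles, where the outward normal is horizontal and $\partial_n(\chi u_n)=u_n\,\partial_n\chi\neq 0$ in general, so you must additionally impose $\partial_n\chi|_{\partial\Omega_\varepsilon}=0$ --- for instance by letting $\chi$ transition only inside a horizontal rung, away from every vertical edge --- which is exactly hypothesis (a) of the paper's Lemma~\ref{lemme_chi} and the choice made in its proof of the proposition.
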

\noindent {This  stability result is given in \cite[\S 4 Chapter 3, Theorem 4.1 Chapter 5]{nazarov1994elliptic} and ~\cite[Theorem 5]{NazarovPeriodic81}.  For the sake of completeness,  we provide a constructive proof based on the following assertion.}
\begin{lemme}
\label{lemme_chi}
Let $\chi\in C^{\infty}(\Omega_{\varepsilon})$ be a function such that 
\begin{enumerate}
\item[(a)]$\partial_n\chi|_{\partial\Omega_{\varepsilon}}=0$,
\item [(b)]$\exists \; M>0$ such that $\;|x|>M\;\Rightarrow\;\chi(x,y)=1$.
\end{enumerate}
If $\{u_j\}_{j\in\mathbb{N}}$ is a singular sequence for the operator $A_{\varepsilon}$ corresponding to the value $\lambda$, then there exists a subsequence of $\{\chi u_{j}\}_{{j}\in\mathbb{N}}$ which is also a singular sequence for the operator $A_{\varepsilon}$ corresponding to the value $\lambda$. 
\end{lemme}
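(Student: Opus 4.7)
The plan is to verify the three Weyl properties for $\chi u_j$ (along a subsequence): membership in $D(A_\varepsilon)$ with $L_2$-norm bounded below, weak $L_2$-convergence to $0$, and $(A_\varepsilon - \lambda)(\chi u_j)\to 0$ strongly in $L_2$. Domain membership is immediate: $\chi$ is $C^\infty$, so $\chi u_j\in H^1_\Delta(\Omega_\varepsilon)$, and assumption (a) together with $\partial_n u_j|_{\partial\Omega_\varepsilon}=0$ give $\partial_n(\chi u_j) = (\partial_n\chi)u_j + \chi\,\partial_n u_j = 0$. Weak convergence $\chi u_j\rightharpoonup 0$ is preserved since $\chi$ is bounded. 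Setting $K := \Omega_\varepsilon\cap\{|x|\leq M\}$ and normalizing $\|u_j\|_{L_2}=1$, assumption (b) gives $\chi\equiv 1$ on $\Omega_\varepsilon\setminus K$, hence
\[
\|\chi u_j\|^2_{L_2} \;\geq\; 1 - \|u_j\|^2_{L_2(K)},
\]
reducing the nontriviality claim to $L_2(K)$-decay of $u_j$.

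For the remaining Weyl property I expand the commutator
\[
(A_\varepsilon - \lambda)(\chi u_j) \;=\; \chi\, f_j \;-\; 2\,\nabla \chi \cdot \nabla u_j \;-\; (\Delta \chi)\, u_j, \qquad f_j := (A_\varepsilon-\lambda)u_j,
\]
with $\chi f_j\to 0$ in $L_2$ by hypothesis. The two commutator terms have integrands supported in the bounded set $K$. Using Green's formula and the Neumann condition, $\|\nabla u_j\|^2_{L_2} = (A_\varepsilon u_j, u_j)_{L_2} = \lambda + (f_j, u_j)_{L_2}$ stays bounded, so $\{u_j\}$ is bounded in $H^1(\Omega_\varepsilon)$. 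The Rellich embedding $H^1(K)\hookrightarrow L_2(K)$ combined with $u_j\rightharpoonup 0$ in $L_2$ then yields a subsequence (still denoted $\{u_j\}$) such that $u_j\to 0$ strongly in $L_2(K)$. This closes the nontriviality claim and handles $(\Delta\chi)u_j \to 0$ in $L_2$.

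The delicate step is the term $\nabla\chi\cdot\nabla u_j$, since Rellich provides no control on $\nabla u_j$. Here I would use a Caccioppoli-type estimate exploiting the near-eigenequation. Choose a smooth cutoff $\eta$, independent of $j$, equal to $1$ on $\mathrm{supp}\,\nabla\chi$ and compactly supported in the $x$-direction. Testing the equation $-\Delta u_j = \lambda u_j - f_j$ against $\eta^2\overline{u_j}$, integrating by parts (the boundary integral vanishes thanks to the Neumann condition on $u_j$), and absorbing the cross product via Young's inequality gives
\[
\tfrac{1}{2}\int \eta^2 |\nabla u_j|^2 \;\leq\; 2\int |\nabla \eta|^2 |u_j|^2 \;+\; |\lambda|\int \eta^2 |u_j|^2 \;+\; \|f_j\|_{L_2}\,\|\eta^2 u_j\|_{L_2}.
\]
The right-hand side tends to $0$ thanks to $u_j\to 0$ in $L_2(\mathrm{supp}\,\eta)$ and $f_j\to 0$ in $L_2$, so $\nabla u_j\to 0$ in $L_2$ on $\mathrm{supp}\,\nabla\chi$, giving $\nabla \chi\cdot\nabla u_j\to 0$ in $L_2(\Omega_\varepsilon)$.

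The main obstacle is precisely this last estimate: the commutator introduces $\nabla u_j$, while standard compactness only hands us $L_2$-strong convergence of $u_j$. The Caccioppoli trick bridges the gap by using that $u_j$ \emph{almost} solves the Helmholtz equation; crucially it relies only on the Neumann condition in the integration by parts and not on any $H^2$-regularity, so the re-entrant corners of the rectangular obstacles do not obstruct the argument. Once the three Weyl properties hold along the extracted subsequence, the lemma is proved.
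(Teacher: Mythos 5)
Your proposal is correct, and its overall skeleton (verification of the four Weyl properties, reduction of the lower bound on $\|\chi u_j\|_{L_2}$ to strong $L_2$-decay of $u_j$ on $K=\Omega_\varepsilon\cap\{|x|\leq M\}$ via boundedness in $H^1$ and Rellich, and the commutator expansion of $(A_\varepsilon-\lambda)(\chi u_j)$) coincides with the paper's proof. The one step where you genuinely diverge is the troublesome term $\nabla\chi\cdot\nabla u_j$. The paper writes $\|\nabla\chi\cdot\nabla u_j\|_{L_2}^2=\int_{K_\varepsilon}|\nabla\chi|^2\,\nabla u_j\cdot\nabla\overline{u_j}$ and integrates by parts once more, moving all derivatives onto $u_j$ so that the integral becomes $-\int_{K_\varepsilon}u_j\bigl(|\nabla\chi|^2\Delta\overline{u_j}+\nabla(|\nabla\chi|^2)\cdot\nabla\overline{u_j}\bigr)$, which vanishes in the limit by Cauchy--Schwarz since $u_j\to 0$ in $L_2(K_\varepsilon)$ while $\nabla u_j$ and $\Delta u_j$ stay bounded in $L_2$ (the latter because $\Delta u_j=\lambda u_j-f_j$). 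You instead establish a local Caccioppoli estimate by testing the near-eigenequation against $\eta^2\overline{u_j}$, which yields the strictly stronger conclusion that $\nabla u_j\to 0$ strongly in $L_2$ on a neighbourhood of $\mathrm{supp}\,\nabla\chi$. Both arguments use only the Neumann condition to kill the boundary term and avoid any $H^2$-regularity near the corners; yours buys local strong $H^1$-convergence at the price of introducing an auxiliary cutoff $\eta$, while the paper's is a one-line computation that only controls the specific weighted integral it needs. The single point to tidy up in your version is that the strong $L_2$-convergence must be arranged on $\mathrm{supp}\,\eta$, which may slightly exceed $\{|x|\leq M\}$; it suffices to run the Rellich extraction on $\{|x|\leq M+1\}$ from the start.
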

\begin{proof}
By definition of a singular sequence, the sequence $\{u_j\}_{j\in\mathbb{N}}$ has the following properties:
\begin{enumerate}
\item $u_j\in D(A_{\varepsilon}),\quad j\in\mathbb{N}$;
\item $\inf\limits_{j\in\mathbb{N}}\|u_j\|_{L_2(\Omega_{\varepsilon})}>0$;
\item $u_j\stackrel{w}{\longrightarrow}0$\; in $\;L_2(\Omega_{\varepsilon})$;
\item $A_{\varepsilon} \, u_j-\lambda \, u_j\longrightarrow 0$ \; in $\;L_2(\Omega_{\varepsilon})$.
\end{enumerate}
Let us show that there exists a subsequence of $\{\chi u_{j}\}_{{j}\in\mathbb{N}}$ which has the same properties. The property 1 is verified by the whole sequence $\chi u_{j}$ thanks to property (a) . To prove property 2, it suffices to show that there exists a subsequence, still denoted $u_j$, such that
\begin{equation}
\label{help_proof_sp_ess}
\|u_j\|_{L_2(K_\varepsilon)}\longrightarrow 0,\quad j\longrightarrow\infty,\quad \text{  with }\; K_\varepsilon := \big\{ (x,y) \in \overline{\Omega_{\varepsilon}} \; / \; |x| \leq M \big\}. 
\end{equation}
Indeed, (\ref{help_proof_sp_ess}) and property (b) imply $\,\inf\limits_{j\in\mathbb{N}}\|\chi u_j\|_{L_2(\Omega_{\varepsilon})}\geqslant\inf\limits_{j\in\mathbb{N}}\|u_j\|_{L_2\left(\Omega_{\varepsilon}\cap\{|x|>M\}\right)}>0$. To prove (\ref{help_proof_sp_ess}), we write
\begin{equation*}
\|\nabla u_j\|^2_{L_2(\Omega_{\varepsilon})} = (A_{\varepsilon} \, u_j-\lambda \, u_j,u_j)_{L_2(\Omega_{\varepsilon})}+ \lambda \; \|u_j\|^2_{L_2(\Omega_{\varepsilon})}.
\end{equation*}
Then properties 3 and 4 imply that $u_j$ is bounded in $H^1(\Omega_{\varepsilon})$. By compactness, one can thus extract a subsequence that converges weakly $H^1(\Omega_{\varepsilon})$ and strongly in $L^2(K_\varepsilon)$ to a limit which is necessarily $0$ thanks to property 3, which proves  (\ref{help_proof_sp_ess}). For the sequel, we work with the above subsequence.
\\[12pt]   The property 3 being obvious the only thing to show is the property 4 for the sequence $\{\chi u_j\}_{j\in\mathbb{N}}$. We have: 
\begin{equation*}
\left\|A_{\varepsilon}(\chi u_j)-\lambda(\chi u_j)\right\|_{L_2(\Omega_{\varepsilon})}^2\leqslant\left\|\chi(A_{\varepsilon}u_j-\lambda u_j)\right\|_{L_2(\Omega_{\varepsilon})}^2+2 \, \|\nabla\chi \cdot \nabla u_j\|_{L_2(\Omega_{\varepsilon})}^2+\|u_j \, \Delta\chi\|_{L_2(\Omega_{\varepsilon})}^2.
\end{equation*}
The first and the last terms in the right-hand side tend to zero
thanks to property 4 and (\ref{help_proof_sp_ess})). Let us estimate
the second term. Using first  property (b), then properties (a) and (1)
together with an integration by parts, we obtain
\begin{align*}
\|\nabla\chi \cdot \nabla u_j\|_{L_2(\Omega_{\varepsilon})}^2=\int\limits_{K_{\varepsilon}}\nabla u_j  \cdot \nabla\overline{u}_j \, |\nabla\chi|^2 \, d{\bf x}=-\int\limits_{K_\varepsilon}u_j \,
\Big( |\nabla\chi|^2 \, \Delta \overline{u_j} + \nabla\big(|\nabla\chi|^2 ) \cdot \nabla \overline{u_j}\Big)\, d{\bf x} 
\end{align*}
which tends to $0$ due to (\ref{help_proof_sp_ess}) since $\nabla u_j$ is bounded in $L^2(\Omega_\varepsilon)$ as well as $\Delta u_j$ (by properties 3 and 4).  
\end{proof}
\begin{proof}[Proof of Proposition \ref{essential_spectrum_same}]
It is sufficient to take a function $\chi$ in the previous lemma which does not depend on $y$, vanishes in a neighbourhood of the perturbed edge and such that $\nabla\chi$ vanishes in a neighbourhood of all vertical edges. Then, it follows from Lemma \ref{lemme_chi} that any singular sequence associated to $\lambda$ of the operator $A_{\varepsilon}$ provides the construction of a singular sequence of the operator $A_{\varepsilon}^{\mu}$ for the same $\lambda$ and vice versa. 
\end{proof}
\noindent The essential spectrum of the operator $\sigma_{ess}(A_{\varepsilon}^{\mu})$ having a band-gap structure, we will be interested in finding eigenvalues inside gaps (once the existence of gaps is established).

\subsection{Towards the existence of eigenvalues: the method of study.} Our analysis consists of three main
steps. 
\begin{itemize} 
\item First, we find a formal limit of the eigenvalue problem (\ref{eigproblem}) when
$\varepsilon\rightarrow 0$ (Section~\ref{SectionDefinitionNotationAmu}). To do so, we use the fact that, as
$\varepsilon$ goes to zero, the domain 
$\Omega_{\varepsilon}^{\mu}$ shrinks to a graph ${\cal G}$. As a consequence, the
formal limit problem will involve a self-adjoint operator ${\cal A}^\mu$ associated with a 
second order differential operator along the graph. Its definition 
is strongly related to the fact that homogeneous Neumann boundary conditions
are considered in the original problem. 
More precisely, at the limit $\varepsilon \rightarrow 0$, looking for an eigenvalue
of $A_{\varepsilon}^\mu $ leads to search an eigenvalue of ${\cal A}^\mu$.
This operator, that is well known (see the works of \cite{Exner:1996,carlson:1998,KuchmentZeng:2001}), will be described
more rigorously in the next section.  
\item The second step is an explicit calculation of the spectrum of
  the limit operator. The essential spectrum is determined using the
  Floquet-Bloch theory (by solving a set of cell problems) (Section~\ref{SectionGrapheSym}) while the
  discrete spectrum of the perturbed operator is found using a
  reduction to a finite difference equation (Section~\ref{SubsectionSprectreDiscretAsmu}). In
  particular, we shall show that the limit operator has infinitely many
 eigenvalues of finite multplicity as
  soon as $\mu <1$ (and no one when $\mu \geq 1$), which form a
  discrete subset of $\R^+$. 
\item Finally, when $\mu < 1$, we deduce the existence of an eigenvalue of $A^\mu_\varepsilon$ close to the eigenvalue of ${\cal A}^\mu$ as soon as $\varepsilon$ is small enough (Section~\ref{sec:devasymp_ladder}). The
proof will be based on the construction of a quasi-mode (a kind approximation of the eigenfunction) and a criterion for
the existence of eigenvalues of self-adjoint operators (see for instance Lemma 4 in \cite{MR2766589}). It can be seen as a generalization of the well-known min-max principle for eigenvalues located below the lower bound of the 
essential spectrum. 
\end{itemize}
An essential preliminary step is the decomposition of the operator
$A_{\varepsilon}^{\mu}$ as the sum of two operators, namely its
symmetric and antisymmetric parts. To do so, we introduce the
following decomposition of $L_2(\Omega_{\varepsilon}^{\mu})$ :
\begin{equation*}
L_2(\Omega_{\varepsilon}^{\mu})=L_{2,s}(\Omega_{\varepsilon}^{\mu})\oplus L_{2,a}(\Omega_{\varepsilon}^{\mu}),
\end{equation*}
where $L_{2,s}(\Omega_{\varepsilon}^{\mu})$ and $L_{2,a}(\Omega_{\varepsilon}^{\mu})$ are subspaces consisting of functions respectively symmetric and antisymmetric with respect to the axis $y=0$:
\begin{equation*}
L_{2,s}(\Omega_{\varepsilon}^{\mu})=\left\{u\in
  L_{2}(\Omega_{\varepsilon}^{\mu})\,/\; u(x,y)=u(x,-y), \;\forall
  \,(x,y) \in \Omega_\varepsilon^\mu\right\},
\end{equation*}
\begin{equation*}
L_{2,a}(\Omega_{\varepsilon}^{\mu})=\left\{u\in
  L_{2}(\Omega_{\varepsilon}^{\mu})\,/\; u(x,y)=-u(x,-y), \; \forall \,(x,y) \in  \Omega_\varepsilon^\mu\right\}.
\end{equation*}
The operator $A_{\varepsilon}^{\mu}$ is then decomposed into the orthogonal sum
\begin{equation*}\label{decomposition_A}
A_{\varepsilon}^{\mu}=A_{\varepsilon,s}^{\mu}\oplus A_{\varepsilon,a}^{\mu}
, \quad 
A_{\varepsilon,s}^{\mu}=\left.A_{\varepsilon}^{\mu}\vphantom{A_{{L_{2,s}(\Omega_{\varepsilon}^{\mu})}}}\right\vert_{L_{2,s}(\Omega_{\varepsilon}^{\mu})}, \qquad A_{\varepsilon,a}^{\mu}=\left.A_{\varepsilon}^{\mu}\vphantom{A_{{L_{2,s}(\Omega_{\varepsilon}^{\mu})}}}\right\vert_{L_{2,a}(\Omega_{\varepsilon}^{\mu})}.
\end{equation*}
Accordingly, the limit operator ${\cal A}^\mu$ is decomposed as:
\begin{equation}
	{\cal A}^\mu = {\cal A}_s^\mu \oplus  {\cal A}_{a}^\mu
\end{equation}
The key point is that, as we shall see, contrary to the full operator
${\cal A}^\mu$ whose spectrum is $\Bbb R^+$, both operators ${\cal A}_s^\mu$ and ${\cal A}_{a}^\mu$ have spectral gaps (an infinity of them), each of them containing eigenvalues : these are isolated eigenvalues for ${\cal A}_s^\mu$ or ${\cal A}_{a}^\mu$, but embedded eigenvalues for
${\cal A}^\mu$. One deduces that the operators
$A_{\varepsilon,s}^{\mu}$ and $A_{\varepsilon,a}^{\mu}$ have  at
least finitely many spectral gaps, the number of gaps tending to $+\infty$ when $\varepsilon$ goes to 0: this is an important fact for applying the quasi-mode approach.\\

\noindent At this stage, it is worthwhile mentionning that the convergence of the spectrum of differential operators in thin
domains degenerating into a graph is not a new subject, particularly
in the case of elliptic operators. In particular, for the Laplace
operator with Neumann boundary conditions and in the case of compact
domains, the convergence results (which are reduced to the convergence
of eigenvalues) have been known since the works of
Rubinstein-Schatzman \cite{SchatzmannRubinstein:2001} and
Kuchment-Zheng \cite{KuchmentZeng:2001}. Thanks to the Floquet Bloch
theory, such results are transformed into analogous results for
thin periodic domains ({in~\cite[Theorem 5.1]{KuchmentZeng:2001}}), since in this case, only continuous spectrum is
involved. For general unbounded domains, a general (and somewhat
abstract) theory has been developed by Post in \cite{Post:2006} for
the convergence of all spectral components. This theory can be applied
to our problem, 
however, for the sake of simplicity, we have chosen to use here a more direct approach (based on the construction quasi-modes). 
\section{Spectral problem on the graph}\label{SectionEtudeDuGraphe}
\subsection{The operator ${\cal A}^{\mu}$.}\label{SectionDefinitionNotationAmu}
As $\varepsilon\rightarrow 0$, the domain $\Omega_{\varepsilon}$ tends
to the periodic graph $\cal G$ represented on Figure~\ref{graphe}. Let us
number the vertical edges of the graph $\cal G$ from left to right so that
the set of the vertical edges is $\{e_j = \{ j\} \times (-L/2, L/2) \}_{j\in\mathbb{Z}}$. The upper
end of the edge $e_j$ is denoted by $M_j^+$ and the lower one by
$M_j^-$. The set of all the vertices of the graphs is then
$$\mathcal{M}=\{M_j^{\pm}\}_{j\in\mathbb{Z}}.$$ The horizontal edge joining the
vertices $M_j^{\pm}$ and $M_{j+1}^{\pm}$  is denoted by
$e_{j+\frac{1}{2}}^{\pm} = (j, j+1) \times \{\pm L/2\}$. The set of all the edges of the graph is
$$\mathcal{E}=\{e_j, e_{j+\frac{1}{2}}^{\pm}\}_{j\in\mathbb{Z}}$$ and we denote by
$\mathcal{E}(M)$ the set of all the edges of the graph containing the vertex $M$.
\begin{center}
\begin{figure}[h]
\begin{center}
\includegraphics[width=0.75\textwidth, trim = 0cm 8cm 0cm 9cm,
clip]{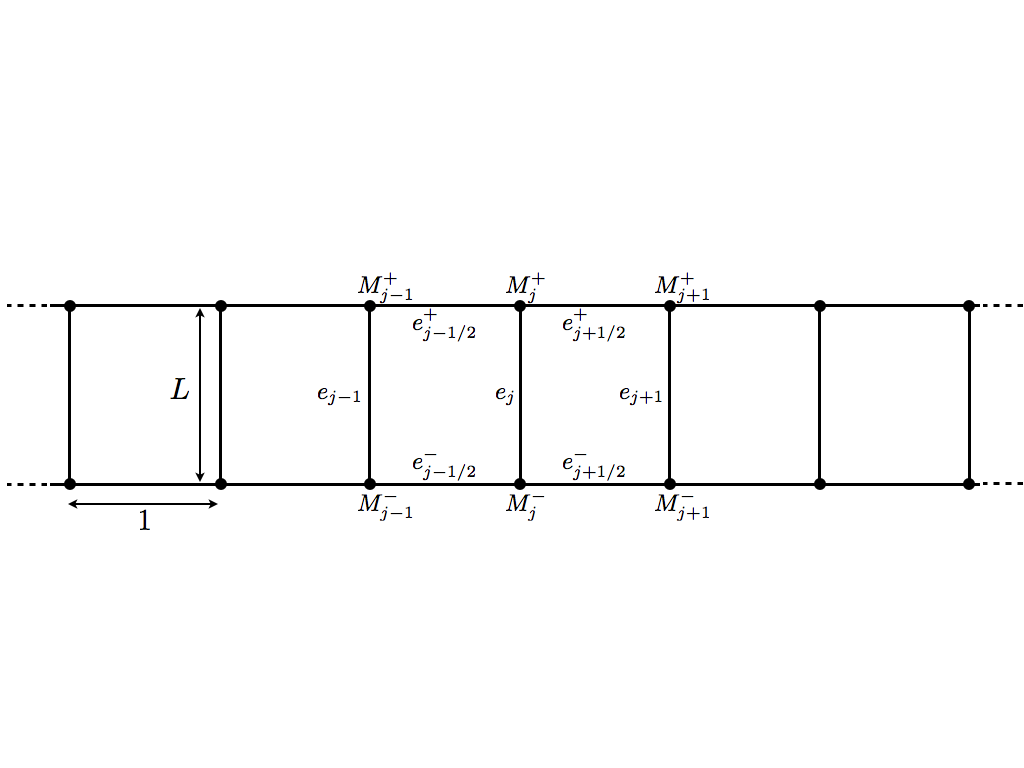}\caption{\label{graphe}Limit graph $\mathcal{G}$}
\end{center}       
\end{figure}
\end{center}
If $u$ is a function defined on $\cal G$ we will use the following
notation: $$\textbf{u}_j^{\pm}=u(M_j^{\pm}), \quad u_j(y)=u|_{e_j},
\quad {u}^{\pm}_{j+\frac{1}{2}}(x)=u|_{e_{j+\frac{1}{2}}^{\pm}}.$$
\\[12pt]
Let $w^{\mu}:\mathcal{E}\longrightarrow \mathbb{R}^+$ be a weight
function which is equal to $\mu$ on the "perturbed edge" $e_0$, i.e. the limit of the perturbed rung $|x|<\mu\varepsilon /2$, and to $1$ on the other edges:
\begin{equation}\label{eq:w_mu}
w^{\mu}(e_0)=\mu, \quad
w^{\mu}(e)=1,\quad \forall \; e\in \mathcal{E},\,e\neq e_0.
\end{equation}
Let us now introduce the following functional spaces
\begin{equation}
\label{L2G}
L_2^{\mu}(\mathcal{G})=\left\{u\:/\: u\in L_2(e),\: \forall e\in \mathcal{E}; \quad\|u\|^2_{L_2^{\mu}(\mathcal{G})}=\sum\limits_{e\in \mathcal{E}}w^{\mu}(e) \, \|u\|_{L_2(e)}^2<\infty \right\},
\end{equation}
\begin{equation*}
\label{H1G}
H^1(\mathcal{G})=\left\{u \in L_2^{\mu}(\mathcal{G})\:/\: u\in C(\mathcal{G}); \quad u\in H^1(e),\: \forall e\in \mathcal{E}; \quad\|u\|^2_{H^1(\mathcal{G})}=\sum\limits_{e\in \mathcal{E}}\|u\|_{H^1(e)}^2<\infty \right\},
\end{equation*}
\begin{equation}
\label{H2G}
H^2(\mathcal{G})=\left\{u \in L_2^{\mu}(\mathcal{G})\:/\: u\in C(\mathcal{G}); \quad u\in H^2(e),\: \forall e\in \mathcal{E}; \quad\|u\|^2_{H^2(\mathcal{G})}=\sum\limits_{e\in \mathcal{E}}\|u\|_{H^2(e)}^2<\infty \right\},
\end{equation}
where $C({\cal G})$ denotes the space of continuous functions on ${\cal G}.$:
\\\\We define the limit operator ${\cal A}^{\mu}$ in
$L_2^{\mu}(\mathcal{G})$ as follows. Denoting $u_e$ the restriction of
$u$ to $e$,
\begin{align}
&({\cal A}^{\mu}u)_{e}=-u''_{e}, \qquad\forall e\in \mathcal{E},\\[1ex]
&D({\cal A}^{\mu})=\big\{u\in H^2(\mathcal{G})\:/\:\sum\limits_{e\in
    \mathcal{E}(M)}w^{\mu}(e) u_{e}'(M)=0,\quad \forall M\in \mathcal{M}\big\},\label{Kirchoff_def}
\end{align}
where $u_{e}'(M)$ stands for the derivative of the function $u_e$
at the point $M$ in the \textit{outgoing direction}. The vertex
relations in (\ref{Kirchoff_def}) are called Kirchhoff's
conditions. Note that they all have an identical expression except at the vertices
$M_0^\pm$.
The following assertion as well as its proof can be found in
\cite[Section 3.3]{KuchmentQuantumgraphs1}. 
\begin{prop}[Kuchment]
The operator ${\cal A}^{\mu}$ in the space $L_2^{\mu}(\mathcal{G})$ is self-adjoint. The corresponding closed sesquilinear form has the following form:
\begin{equation*}
a^{\mu}[f,g]=(f',g')_{L_2^{\mu}(\mathcal{G})},\quad \forall f,g\in D[a^{\mu}],\qquad D[a^{\mu}]=H^1(\mathcal{G}).
\end{equation*}
\end{prop}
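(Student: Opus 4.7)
The plan is to apply the Kato representation theorem (the form version of the spectral theorem for self-adjoint operators) to the sesquilinear form $a^\mu$ defined on $H^1(\mathcal{G})$, and then to identify the associated operator with $\mathcal{A}^\mu$ by integration by parts edge-by-edge.

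First I would verify that $a^\mu$ is a densely defined, closed, non-negative, symmetric sesquilinear form on $L_2^\mu(\mathcal{G})$. Density is clear, since functions in $H^1(\mathcal{G})$ that are piecewise smooth and supported on finitely many edges are dense in $L_2^\mu(\mathcal{G})$ (the weight $w^\mu$ is bounded above and below by positive constants). Symmetry and non-negativity are obvious from $a^\mu[f,g] = (f',g')_{L_2^\mu(\mathcal{G})}$. For closedness, note that the form norm $\|f\|_{L_2^\mu(\mathcal{G})}^2 + a^\mu[f,f]$ is equivalent to the $H^1(\mathcal{G})$ norm; any form-Cauchy sequence $(f_n)$ converges in $H^1$ on each edge, and the continuity constraint at each vertex, being a finite set of linear evaluations which are continuous on $H^1(e)$ (by the Sobolev embedding $H^1(e) \hookrightarrow C(\bar e)$), passes to the limit. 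Hence the limit lies in $H^1(\mathcal{G})$.

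By the representation theorem there exists a unique self-adjoint operator $T$ on $L_2^\mu(\mathcal{G})$ with domain $D(T) \subset H^1(\mathcal{G})$ such that $a^\mu[u,v] = (Tu,v)_{L_2^\mu(\mathcal{G})}$ for every $u \in D(T)$ and every $v \in H^1(\mathcal{G})$. It remains to show $T = \mathcal{A}^\mu$. The inclusion $D(\mathcal{A}^\mu) \subset D(T)$ follows by integration by parts: for $u \in D(\mathcal{A}^\mu)$ and $v \in H^1(\mathcal{G})$,
\begin{equation*}
a^\mu[u,v] = \sum_{e \in \mathcal{E}} w^\mu(e) \int_e u_e' \, \overline{v_e'} \, dx
= -\sum_{e \in \mathcal{E}} w^\mu(e) \int_e u_e'' \, \overline{v_e} \, dx
+ \sum_{M \in \mathcal{M}} \overline{v(M)} \sum_{e \in \mathcal{E}(M)} w^\mu(e) \, u_e'(M),
\end{equation*}
where the continuity of $v$ at each vertex has been used to factor $\overline{v(M)}$. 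The vertex terms vanish thanks to Kirchhoff's conditions (\ref{Kirchoff_def}), so $a^\mu[u,v] = (\mathcal{A}^\mu u, v)_{L_2^\mu(\mathcal{G})}$, which shows $u \in D(T)$ and $Tu = \mathcal{A}^\mu u$.

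For the converse inclusion $D(T) \subset D(\mathcal{A}^\mu)$, given $u \in D(T)$ with $Tu = f \in L_2^\mu(\mathcal{G})$, one first tests against $v \in H^1(\mathcal{G})$ supported on a single edge $e$ with $v$ vanishing at the endpoints; this forces $u_e'' = -f_e$ in $\mathcal{D}'(e)$, hence $u_e \in H^2(e)$ on each edge and $u \in H^2(\mathcal{G})$. Integration by parts is then licit and, using the identity just established, the remaining boundary contribution reduces to $\sum_{M} \overline{v(M)} \sum_{e \in \mathcal{E}(M)} w^\mu(e) \, u_e'(M) = 0$ for all $v \in H^1(\mathcal{G})$. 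Choosing, for each fixed vertex $M_0$, a function $v$ that equals $1$ at $M_0$ and $0$ at every other vertex, the Kirchhoff condition (\ref{Kirchoff_def}) at $M_0$ follows, whence $u \in D(\mathcal{A}^\mu)$. The mild obstacle is to handle carefully the weight $w^\mu$ on the rung $e_0$, which enters both in the $L_2^\mu$-inner product and in the weighted Kirchhoff balance at $M_0^\pm$; the two weightings must (and do) match, which is precisely why the natural ambient space is $L_2^\mu(\mathcal{G})$ and not $L_2(\mathcal{G})$.
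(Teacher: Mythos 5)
Your proof is correct and is essentially the argument the paper relies on: the paper gives no proof of its own but cites \cite[Section 3.3]{KuchmentQuantumgraphs1}, where self-adjointness of quantum-graph Hamiltonians with (weighted) Kirchhoff conditions is established by exactly this route --- the closed non-negative form on $H^1(\mathcal{G})$, the representation theorem, and edge-by-edge integration by parts to identify the domain. Your handling of the weight $w^{\mu}$ (matching the $L_2^{\mu}$ inner product with the weighted Kirchhoff balance) and of the infinite edge sums is sound, so there is nothing to add.
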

\noindent As for the ladder, we introduce the following decomposition of the space $L_2^{\mu}(\mathcal{G})$ into the spaces of symmetric and antisymmetric functions:
\begin{equation*}
L_{2}^{\mu}(\mathcal{G})=L_{2,s}^{\mu}(\mathcal{G})\oplus L_{2,a}^{\mu}(\mathcal{G}),
\end{equation*}
\begin{equation*}
L_{2,s}^{\mu}(\mathcal{G})=\left\{u\in L_2(\mathcal{G})\,/\;
  u(x,y)=u(x,-y), \;\forall\, (x,y) \in \mathcal{G}\right\},
\end{equation*}
\begin{equation*}
L_{2,a}^{\mu}(\mathcal{G})=\left\{u\in L_2(\mathcal{G})\,/\;
  u(x,y)=-u(x,-y), \; \forall \,(x,y) \in \mathcal{G}\right\}.
\end{equation*} 
Again, the operator ${\cal A}^{\mu}$ can be decomposed into the orthogonal sum
\begin{equation*}
{\cal A}^{\mu}={\cal A}_{s}^{\mu}\oplus {\cal A}_{a}^{\mu},
\end{equation*}
with
\begin{equation*}
{\cal A}_{s}^{\mu}=\left.{\cal A}^{\mu}\vphantom{A_{{L_{2,s}(\Omega_{\varepsilon}^{\mu})}}}\right\vert_{L_{2,s}^{\mu}(\mathcal{G})}, \qquad {\cal A}_{a}^{\mu}=\left.{\cal A}^{\mu}\vphantom{A_{{L_{2,s}(\Omega_{\varepsilon}^{\mu})}}}\right\vert_{L_{2,a}^{\mu}(\mathcal{G})},
\end{equation*}
which implies
\begin{equation*}
\sigma({\cal A}^{\mu})=\sigma({\cal A}_{s}^{\mu})\cup\sigma({\cal A}_{a}^{\mu}).
\end{equation*}
Thus, it is sufficient to study the spectrum of the operators ${\cal
  A}_{s}^{\mu}$ and ${\cal A}_{a}^{\mu}$ separately. The analysis of
these two operators being analogous, we will present a detailed study
of ${\cal A}_{s}^{\mu}$ (Section~ \ref{SectionGrapheSym} and
Section~ \ref{SubsectionSprectreDiscretAsmu}) and state the results for ${\cal A}_{a}^{\mu}$ (Section~ \ref{SectionSpectreAntisymetrique}).
\\\\ 
\subsection{The essential spectrum of the operator
  ${\cal A}_{s}^\mu$}\label{SectionGrapheSym}
We shall study the spectrum of the operator ${\cal A}_s^\mu$ by a
perturbation technique with respect to the case $\mu=1$ which
corresponds to the purely periodic case. The corresponding operator
will be denoted by ${\cal A}_s$.  
Indeed, based on compact perturbation arguments {( in~\cite[Theorem 4, Chapter 9]{BirmanSolomjakBook})}, we
can prove the following proposition:
\begin{prop}
\label{proposition_spectre} The essential spectra of ${\cal A}_s^{\mu}$ and
${\cal A}_s$ coincide:
\begin{equation}
 \sigma_{ess}({\cal A}_s^{\mu})=\sigma_{ess}({\cal A}_s).
\end{equation}
\end{prop}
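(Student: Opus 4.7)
The plan is to mimic the approach used in the proof of Proposition \ref{essential_spectrum_same} for the ladder domain: construct a Weyl singular sequence for one operator from a singular sequence of the other by means of a cutoff function $\chi$ localized away from the perturbed edge $e_0$. Outside of $e_0$ and its two adjacent vertices $M_0^\pm$, the operators $\mathcal{A}_s^\mu$ and $\mathcal{A}_s$ coincide (same differential expression on each edge, weight $w^\mu = 1$, and identical Kirchhoff conditions), so a well-chosen cutoff will allow transfer of singular sequences in both directions.

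First I would prove a graph analogue of Lemma \ref{lemme_chi}. To that end I would introduce a function $\chi : \mathcal{G} \to \mathbb{R}$ which is smooth on each edge and which is (a) \emph{locally constant at every vertex of $\mathcal{G}$}, (b) identically $0$ on $e_0$ and on a small neighborhood of $M_0^\pm$ inside each adjacent horizontal edge, and (c) identically $1$ outside a bounded region $K \subset \mathcal{G}$. Condition (a) is the essential technical ingredient: since $\chi'$ vanishes at each vertex $M$, one has $\sum_{e \in \mathcal{E}(M)} w^\mu(e)\,(\chi u)_e'(M) = \chi(M) \sum_{e \in \mathcal{E}(M)} w^\mu(e)\, u_e'(M)$, so $\chi u$ inherits the Kirchhoff conditions of $u$ at every vertex away from $M_0^\pm$. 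Condition (b) ensures that $\chi u$ vanishes in a neighborhood of $M_0^\pm$, which is precisely where the Kirchhoff conditions of $\mathcal{A}_s^\mu$ and $\mathcal{A}_s$ disagree, so the perturbed condition there is trivially satisfied. Altogether, multiplication by $\chi$ sends $D(\mathcal{A}_s)$ into $D(\mathcal{A}_s^\mu)$ and conversely.

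Given a singular sequence $\{u_j\}$ of $\mathcal{A}_s$ at $\lambda$, I would then verify that a subsequence of $\{\chi u_j\}$ is a singular sequence of $\mathcal{A}_s^\mu$ at $\lambda$. Properties 1 (domain) and 3 (weak convergence to $0$) are immediate. For Property 2, the energy identity $\|u_j'\|^2_{L_2(\mathcal{G})} = (\mathcal{A}_s u_j - \lambda u_j, u_j)_{L_2(\mathcal{G})} + \lambda\, \|u_j\|^2_{L_2(\mathcal{G})}$ shows that $\{u_j\}$ is bounded in $H^1(\mathcal{G})$, so the compact embedding $H^1(K) \hookrightarrow L_2(K)$ (Rellich, applied edge by edge on the compact piece $K$) allows the extraction of a subsequence with $\|u_j\|_{L_2(K)} \to 0$; since $\chi \equiv 1$ outside $K$, the norm $\|\chi u_j\|_{L_2^\mu(\mathcal{G})}$ stays bounded away from $0$. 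Property 4 follows from the identity $\mathcal{A}_s^\mu(\chi u_j) - \lambda (\chi u_j) = \chi\,(\mathcal{A}_s u_j - \lambda u_j) - 2\chi'\, u_j' - \chi''\, u_j$ on each edge: the first term tends to $0$ by assumption, and the commutator terms, supported in $K$, are controlled exactly as in Lemma \ref{lemme_chi} — an integration by parts shifts the derivative off $u_j'$ and exploits that $\Delta u_j$ and $u_j$ are bounded in $L_2$ while $u_j \to 0$ strongly in $L_2(K)$.

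The reverse inclusion $\sigma_{ess}(\mathcal{A}_s^\mu) \subset \sigma_{ess}(\mathcal{A}_s)$ follows from the \emph{same} cutoff construction with the roles of the two operators exchanged, because on the support of $\chi u_j$ the two operators act identically. The only genuine technical point I anticipate is the careful design of $\chi$ to respect the graph's vertex structure — hence condition (a) — but, apart from this combinatorial aspect, the argument is a direct transcription of the ladder case, with Rellich on each edge of the graph substituting for the two-dimensional compactness used there.
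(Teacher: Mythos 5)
Your proof is correct, but it takes a different route from the paper for this particular statement. The paper does not prove Proposition \ref{proposition_spectre} by hand: it simply invokes stability of the essential spectrum under compact perturbations, citing Birman--Solomjak, and reserves the constructive singular-sequence argument (Lemma \ref{lemme_chi}) for the ladder operator $A_\varepsilon^\mu$, where the mismatch of resolvent domains prevents a direct appeal to the standard theorems. What you do is transpose that constructive argument to the graph, and this is a legitimate --- arguably more self-contained --- alternative: the same domain mismatch is present on the graph (the Kirchhoff conditions at $M_0^\pm$ differ between $D(\mathcal{A}_s)$ and $D(\mathcal{A}_s^\mu)$, and the Hilbert spaces carry different weights on $e_0$), so your explicit transfer of Weyl sequences addresses a point the paper handles only by citation. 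Your condition (a) is indeed the key technical ingredient, and it earns its keep twice: once to propagate the Kirchhoff conditions to $\chi u$, and once more --- a point worth stating explicitly --- to kill the vertex boundary terms that appear when you integrate by parts edge by edge in the estimate of $\|\chi' u_j'\|_{L_2}$, since $|\chi'|^2$ vanishes at every endpoint of every edge. Two further details should be made explicit for completeness: the cutoff must be chosen symmetric with respect to $y=0$ so that multiplication by $\chi$ preserves the subspace $L_{2,s}^\mu(\mathcal{G})$ on which $\mathcal{A}_s^\mu$ acts; and since $\chi$ vanishes on $e_0$, the $\mu$-weighted and unweighted norms agree on the support of $\chi u_j$, which is what makes the sequence transferable between the two (non-identical) Hilbert spaces $L_2^\mu(\mathcal{G})$ and $L_2(\mathcal{G})$. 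With those remarks added, your argument is a complete proof; what the paper's citation buys is brevity, while your version buys a uniform, elementary treatment of both the ladder and the graph by the same mechanism.
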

\noindent This reduces the study of the essential spectrum of ${\cal A}_s^\mu$ to the
study of the spectrum of the purely periodic operator ${\cal A}_s$, which can
be done through the Floquet-Bloch theory.

\subsubsection{Description of the spectrum of ${\cal A}_s$ through Floquet-Bloch theory}  
As previously explained, the spectrum of the operator ${\cal A}_{s}$
can be studied using the Floquet-Bloch theory. One has then to study a
set of problems set on the periodicity cell of $\mathcal{G}$. Since
we consider the subspace of symmetric functions with respect to the
axis $y=0$, this enables to reduce the problem to the lower half part
of the periodicity cell $\mathcal{C}_{-}=\mathcal{G}\cap
[-1/2,1/2]\times [-L/2,0]$ (see Figure~\ref{halfcell_graph}).
\begin{center}
\begin{figure}[htbp]
\begin{center}
\includegraphics[width=0.3\textwidth, trim=5cm 9cm
  16cm 10cm, clip]{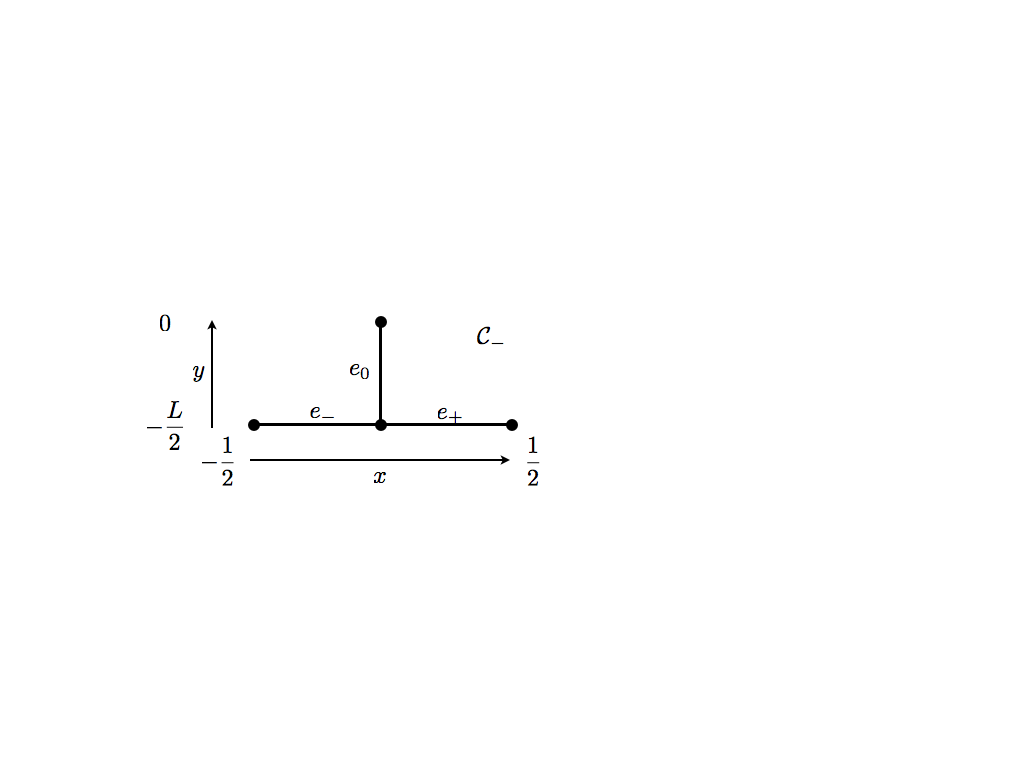}\caption{\label{halfcell_graph}Periodicity cell}
\end{center}       
\end{figure}
\end{center}
We introduce the spaces $L_2(\mathcal{C}_-)$ and $H^2(\mathcal{C}_-)$ analogously to (\ref{L2G}), (\ref{H2G}):
\begin{equation*}
L_2(\mathcal{C}_-)=\big\{u\:/\: u_\ell := u|_{e_\ell} \in L_2(e_\ell),
\,\ell \in \{0, +, -\}\big\},
\end{equation*}
\begin{equation*}
H^2(\mathcal{C}_-)=\big\{u\in C(\mathcal{C}_-) / u_\ell\in
H^2(e_\ell), \ell \in \{0, +, -\}\big\}.
\end{equation*}
We have then
\begin{equation}
\label{spectrum}
\sigma({\cal A}_{s})=\bigcup\limits_{\theta\in[0,\pi]}\sigma\left({\cal A}_{s}(\theta)\right)
\end{equation}
where ${\cal A}_{s}(\theta)$ is the following unbounded operator in $L_2(\mathcal{C}_-)$
$$
\begin{array}{|l}
\dsp \big[{\cal A}_{s}(\theta)u\big]_\ell=-u_\ell'', \; \ell \in \{0, +, -\}, \\[6pt]
\dsp D({\cal A}_{s}(\theta))=\left\{u\in H^2(\mathcal{C}_-)\,/ \; u \mbox{ satisfies } (\ref{kirchoff}) \mbox{ and } u_0'(0)=0 \right\}
\end{array}
$$
\begin{equation} \label{kirchoff}
	\begin{array}{c}
(a) \quad  u'_+(0) -u'_-(0) +u'_0\left(-{L}/{2}\right)=0, \\[5pt] (b) \quad 
u_+\left({1}/{2}\right)=e^{-i\theta}u_-\left(-{1}/{2}\right),\quad u'_+\left({1}/{2}\right)=e^{-i\theta}u'_-\left(-{1}/{2}\right).
\end{array}
\end{equation}
In the definition of $D\big({\cal A}_{s}(\theta)\big)$, the condition $u_0'(0)=0$ corresponds to the symmetry with respect to $y=0$, (\ref{kirchoff})-(a) is the Kirchhoff's condition with $\mu=1$ and 
(\ref{kirchoff})-(b) are the $\theta$-quasiperiodicity conditions.
For each $\theta\in[0,\pi]$, the operator ${\cal A}_{s}(\theta)$ is self-adjoint and positive and its resolvent is compact due to the compactness of the embedding $H^1(\mathcal{C}_{-})\subset L_2(\mathcal{C}_{-})$. Consequently, its spectrum is a sequence of non-negative eigenvalues of finite multiplicity tending to infinity:
\begin{equation}
\label{eig}
0\leqslant\lambda^{(1)}_{s}(\theta)\leqslant\lambda^{(2)}_{s}(\theta)\leqslant\dots\leqslant\lambda^{(n)}_{s}(\theta)\leqslant\dots,\qquad \lim\limits_{n\rightarrow\infty}\lambda^{(n)}_{s}(\theta)=+\infty.
\end{equation} 
In the present case, the eigenvalues can be computed explicitly. 
\begin{prop}
\label{eigenvalues_sym}
For $\theta\in[0,\pi]$, $\omega^2\in\sigma({\cal A}_s(\theta))$ if and only if $\omega$ is a solution of the equation
\begin{equation}
\label{caract_spectre_ess_sym}
\displaystyle 2\cos{({\omega L}/{2})}\left(\cos{\omega}-\cos{\theta}\right)=\sin{\omega} \, \sin{({\omega L}/{2})}.
\end{equation}
\end{prop}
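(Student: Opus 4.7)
The plan is to reduce the eigenvalue problem for $\mathcal{A}_s(\theta)$ to a $3{\times}3$ (really $2{\times}2$ after exploiting the symmetry) linear system for the three unknown coefficients parametrising solutions of $-u''=\omega^{2}u$ on the three edges of $\mathcal{C}_-$, and to compute its determinant explicitly. Concretely, I parametrise the vertical edge $e_0$ by $y\in[-L/2,0]$ and the horizontal edges $e_\pm$ by the natural $x$ variable ($x\in[0,1/2]$ for $e_+$, $x\in[-1/2,0]$ for $e_-$), and write
\begin{equation*}
u_0(y)=\alpha\cos(\omega y)+\beta\sin(\omega y),\qquad u_\pm(x)=a_\pm\cos(\omega x)+b_\pm\sin(\omega x).
\end{equation*}
The condition $u_0'(0)=0$ immediately kills $\beta$, so $u_0(y)=\alpha\cos(\omega y)$.

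Next I would impose the remaining constraints building into $D(\mathcal{A}_s(\theta))$. Continuity at the vertex $M_0^-=(0,-L/2)$ (built into $H^2(\mathcal{G})\subset C(\mathcal{G})$) gives $a_+=a_-=\alpha\cos(\omega L/2)$. The Kirchhoff relation \eqref{kirchoff}(a), evaluated with $u_0'(-L/2)=\omega\alpha\sin(\omega L/2)$, $u'_+(0)=\omega b_+$, $u'_-(0)=\omega b_-$, reduces (for $\omega\neq 0$) to $b_+-b_-=-\alpha\sin(\omega L/2)$. At this stage I have two remaining unknowns (say $\alpha$ and $b_-$) and the two complex quasiperiodicity relations \eqref{kirchoff}(b), which I rewrite as a homogeneous $2{\times}2$ linear system. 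A non-trivial solution exists iff the determinant of this system vanishes.

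The only non-routine step is the simplification of this determinant. Expanding and collecting terms, the piece that does not involve $\tan(\omega L/2)$ equals
\begin{equation*}
\cos^{2}(\omega/2)(1-e^{-i\theta})^{2}+\sin^{2}(\omega/2)(1+e^{-i\theta})^{2}=2\,e^{-i\theta}\bigl(\cos\theta-\cos\omega\bigr),
\end{equation*}
using $\cos^{2}(\omega/2)-\sin^{2}(\omega/2)=\cos\omega$, while the remaining contribution reduces, via $2\sin(\omega/2)\cos(\omega/2)=\sin\omega$, to $e^{-i\theta}\sin\omega\,\tan(\omega L/2)$. Dividing by $e^{-i\theta}$ and multiplying by $\cos(\omega L/2)$ yields exactly \eqref{caract_spectre_ess_sym}.

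The main obstacle is really just bookkeeping in the determinant computation; the two minor pitfalls I would handle separately are $\omega=0$ (where $-u''=0$ forces $u$ to be affine on each edge and one checks directly that $\omega=0$ satisfies \eqref{caract_spectre_ess_sym} only when $\theta=0$, consistently with the fact that constants form the kernel of $\mathcal{A}_s(0)$) and the case $\cos(\omega L/2)=0$, in which I would not divide by $\cos(\omega L/2)$ but instead argue directly from the $2{\times}2$ system that $\omega^{2}\in\sigma(\mathcal{A}_s(\theta))$ iff $\sin\omega\sin(\omega L/2)=0$, again consistent with \eqref{caract_spectre_ess_sym}. Conversely, any $\omega$ satisfying \eqref{caract_spectre_ess_sym} yields, by solving the same linear system backwards, a non-trivial eigenfunction, which establishes the ``if'' direction.
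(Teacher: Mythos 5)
Your proposal is correct and follows essentially the same route as the paper's proof: solve $-u''=\omega^2 u$ explicitly on each of the three edges of $\mathcal{C}_-$, impose the symmetry, continuity, Kirchhoff and quasiperiodicity conditions to obtain a homogeneous linear system, and identify \eqref{caract_spectre_ess_sym} as the vanishing of its determinant, with $\omega=0$ treated separately via affine solutions. The only differences are cosmetic (trigonometric rather than exponential basis, and eliminating via continuity/Kirchhoff first to reach a $2\times2$ system instead of the paper's $3\times3$ one), and your determinant simplification checks out.
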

\begin{proof}
If $\omega^2\neq 0$ is an eigenvalue of the operator ${\cal A}_s(\theta)$ then the corresponding eigenfunction $u=\{u_0, u_+, u_-\}$ is of the form   
\begin{align}
 u_-(x)&=a_- \, e^{i\omega x}+b_- \, e^{-i\omega x},\qquad x\in[-1/2,0],\label{u1}\\
 u_+(x)&=a_+ \, e^{i\omega x}+b_+ \, e^{-i\omega x},\qquad x\in[0,1/2],\\
 u_0(y)&=a_0 \, e^{i\omega y}+b_0 \, e^{-i\omega y},\qquad y\in[-L/2,0].\label{u3}
\end{align}
Taking into account that $u\in D({\cal A}_s(\theta))$, we arrive at the following linear system 
\begin{align} 
&a_-+b_-=a_++b_+=a_0 \, e^{-i\omega L/2}+b_0 \, e^{i\omega L/2},\label{continuity}\\
&a_0=b_0,\label{symmetry}\\
&b_- -a_- +a_+-b_++a_0 \, e^{-i\omega L/2}-b_0 \, e^{i\omega L/2}=0\label{kirch}, \\
&a_+e^{i\omega/2}+b_+e^{-i\omega/2}=e^{-i\theta}\big(a_-e^{-i\omega/2}+b_-e^{i\omega/2}\big),\label{qpfun}\\
&a_+e^{i\omega/2}-b_+e^{-i\omega/2}=e^{-i\theta}\big(a_-e^{-i\omega/2}-b_-e^{i\omega/2}\big)\label{qpderiv}. 
\end{align}
The relations (\ref{continuity}) express the continuity of the eigenfunction at the vertex $(0, -L/2)$. The equation (\ref{symmetry}) comes from the condition $u'_0(0)=0$. The relation (\ref{kirch}) corresponds to (\ref{kirchoff})-(a) while (\ref{qpfun}) and (\ref{qpderiv}) correspond to (\ref{kirchoff})-(b).
Adding and substracting (\ref{qpfun}) and (\ref{qpderiv}) lead to
$
a_-=a_+ \, e^{i (\theta+ \omega) } \,  \mbox{ and } b_-=b_+ \, e^{i (\theta- \omega) } , 
$
which we can substitute into (\ref{continuity})--(\ref{kirch}) to obtain the following system in $(a_+, b_+, a_0)$
\begin{equation}
	M(\theta, \omega, L) \; \begin{pmatrix} a_+ \\ b_+ \\ a_0 \end{pmatrix} = 0 \quad \mbox{where} \quad M(\theta, \omega, L) := \begin{pmatrix}  \; 1-e^{i (\theta+ \omega) }  & 1-e^{i (\theta- \omega) }  & 0\\[2ex]
\; 	1 & 1 & - 2 \cos(\frac{\omega L}{2})\\[2ex]
\; 	 1- e^{i (\theta+ \omega) }  &-1+e^{i (\theta- \omega) }  & -
2 i \sin(\frac{\omega L}{2}) \end{pmatrix} 
\end{equation}
It is then easy to conclude since one obtains, after some computations omitted here
$$
\mbox{det } M(\theta, \omega, L)  = 4
e^{i\theta} \, \left(2 \cos{\left(\textstyle\frac{\omega L}{2}\right)}\left(\cos{\omega}-\cos{\theta}\right) - \sin{\omega}\sin{\left(\textstyle\frac{\omega L}{2}\right)} \right).  $$
 For $\omega=0$, the relations (\ref{u1})--(\ref{u3}) are replaced by
\begin{align*}
 u_-(x)&=a_-+b_- \, x,\qquad x\in[-1/2,0],\\
 u_+(x)&=a_++b_+ \, x,\qquad x\in[0,1/2],\\
 u_0(y)&=a_0+b_0, y,\qquad y\in[-L/2,0].
\end{align*}
Using the fact that $u\in D({\cal A}_s(\theta))$ we have (instead of (\ref{continuity}-\ref{qpderiv})):
\begin{equation*} 
a_-=a_+=a_0,\qquad b_0=0,\qquad b_-=b_+, \qquad b_+=b_- \, e^{-i\theta},\qquad b_+=a_- \, (e^{-i\theta}-1). 
\end{equation*}
One then easily sees that there exists a non-trivial solution if and only if $\theta=0$ and that the corresponding eigenfunction is constant. Noticing that, for $\theta = 0$, $\omega=0$ is solution of (\ref{caract_spectre_ess_sym}) allows us to conclude.
\end{proof}
\noindent The reader will notice that when $L \in \mathbb{Q}$, the spectrum of ${\cal A}_s(\theta)$ has a particular structure: it is the image by the function $x \mapsto x^2$ of a periodic countable subset of $\mathbb{R}$. To see that, it suffices to remark that both functions at the left and right hand sides of (\ref{caract_spectre_ess_sym}) are periodic with a common period. As a consequence of (\ref{spectrum}), the spectrum of ${\cal A}_s(\theta)$  is the image by the function $x \mapsto x^2$ of a periodic  subset of $\mathbb{R}$.
\subsubsection{Characterization of the spectrum of ${\cal A}_s$}
\noindent Using (\ref{spectrum}), Proposition \ref{eigenvalues_sym} allows us to describe the structure of the spectrum of the operator ${\cal A}_s$. We first prove the existence of a countable infinity of gaps.
\begin{prop}\label{result_sym} The following properties hold
\begin{enumerate}
\item $\sigma_{2,s} \cup \sigma_{L,s}\subset\sigma({\cal A}_s) $, where  
$\sigma_{2,s} = \left\{(\pi n)^2,\; n\in\mathbb{N}\right\}$ and $\sigma_{L,s} = \left\{(2\pi n/L)^2,\; n\in\mathbb{N}\right\}$.

\item The operator ${\cal A}_s$ has infinitely many gaps whose ends tend to infinity.
\end{enumerate}

\end{prop}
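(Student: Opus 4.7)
My plan is to rewrite the dispersion equation~(\ref{caract_spectre_ess_sym}) by isolating $\cos\theta$: wherever $\cos(\omega L/2)\ne 0$ it is equivalent to $\cos\theta = f(\omega)$ where
\begin{equation*}
f(\omega) \,:=\, \cos\omega \,-\, \frac{1}{2}\sin\omega\,\tan(\omega L/2).
\end{equation*}
In view of~(\ref{spectrum}), this yields the criterion: a value $\omega^2$ with $\cos(\omega L/2)\ne 0$ belongs to $\sigma({\cal A}_s)$ if and only if $|f(\omega)|\le 1$. At the isolated points where $\cos(\omega L/2) = 0$, equation~(\ref{caract_spectre_ess_sym}) reduces (since $|\sin(\omega L/2)|=1$) to $\sin\omega = 0$, independently of $\theta$; in particular $\omega^2 \notin \sigma({\cal A}_s)$ whenever $\cos(\omega L/2) = 0$ and $\sin\omega \ne 0$.

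Item~(1) follows by direct substitution. For $\omega = n\pi$ the right-hand side of~(\ref{caract_spectre_ess_sym}) vanishes, and the left-hand side vanishes provided $\cos\theta = (-1)^n$, which is attained by $\theta \in \{0,\pi\}$. For $\omega = 2n\pi/L$ one has $\sin(\omega L/2) = 0$ and $\cos(\omega L/2) = (-1)^n \ne 0$, so the equation reduces to $\cos\theta = \cos(2n\pi/L)$, admitting a solution $\theta \in [0,\pi]$ since the right-hand side lies in $[-1,1]$. In both cases $\omega^2 \in \sigma({\cal A}_s(\theta))$ for a suitable $\theta$, hence $\omega^2 \in \sigma({\cal A}_s)$.

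For item~(2), I will exploit the poles of $\tan(\omega L/2)$ at $\omega_k := (2k+1)\pi/L$. A short expansion yields, as $\omega\to\omega_k$,
\begin{equation*}
f(\omega) \,=\, \frac{\sin\omega_k}{L(\omega-\omega_k)} \,+\, O(1).
\end{equation*}
When $\sin\omega_k \ne 0$, $f$ tends to $+\infty$ on one side of $\omega_k$ and to $-\infty$ on the other, so by continuity there exist $\alpha_k < \omega_k < \beta_k$ with $|f(\omega)| > 1$ on $(\alpha_k,\omega_k)\cup(\omega_k,\beta_k)$. Combined with the exclusion of $\omega_k$ itself (first paragraph), this shows that $(\alpha_k^2,\beta_k^2)$ is a genuine gap of $\sigma({\cal A}_s)$. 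These gaps remain distinct across different $k$ because, by item~(1), the intermediate value $\omega = 2(k+1)\pi/L$ lying between $\omega_k$ and $\omega_{k+1}$ satisfies $\omega^2 \in \sigma({\cal A}_s)$. Since $\omega_k \to +\infty$, the endpoints of the resulting gaps tend to infinity.

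The main delicate point is the arithmetic verification that $\sin\omega_k \ne 0$ for infinitely many $k$: this is trivial when $L \notin \mathbb{Q}$, and for $L = p/q$ in lowest terms it reduces to noting that $\sin\omega_k = 0$ iff $p \mid (2k+1)$, which excludes at most an arithmetic progression of indices and leaves infinitely many admissible $k$. This completes the plan for item~(2).
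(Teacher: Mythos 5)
Your proof is correct and follows essentially the same route as the paper: gaps are located around the points $\omega_k=(2k+1)\pi/L$ where $\cos(\omega L/2)=0$ (equivalently, where $f=-g$ blows up), and they are kept distinct by the spectrum points $2n\pi/L$ supplied by item (1). The only substantive difference is that you sidestep the paper's delicate case $\sin\omega_k=0$ (its case 2.(b), where $\omega_k^2$ itself belongs to the spectrum and one must exhibit a gap on either side via a Taylor expansion of the dispersion relation) by an arithmetic argument showing that infinitely many $k$ with $\sin\omega_k\neq 0$ remain, which indeed suffices for the statement as written.
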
 
\begin{proof}\begin{enumerate}
	\item For $\sin\omega=0$ or $\sin(\omega L/2)=0$, the equation
          (\ref{caract_spectre_ess_sym}) is satisfied for
          $\cos(\theta)=\cos(\omega)$ so that $\omega$ belongs to  $\sigma({\cal A}_s(\theta))\subset\sigma({\cal A}_s)$.
\item Let $\omega_n=(2n+1)\pi/L$ (such that $\cos{(\omega_n L/2)}=0$), let us distinguish two cases:\\\\
(a) $\sin(\omega_n)\neq0$: the left hand side of equation
(\ref{caract_spectre_ess_sym}) vanishes for all $\theta$ and, as $\sin{(\omega_n L/2)}\neq 0$,  the right hand side does not. Then $\omega_n^2$ does not belong to the spectrum of $\sigma({\cal A}_s)$. Since $(2\pi n/L)^2$ and $(2\pi (n+1)/L)^2$ belong to $\sigma({\cal A}_s)$ (in view of the point 1) there exists a gap which contains $\omega_n^2$, strictly included in $((2\pi n/L)^2,(2\pi (n+1)/L)^2)$.\\\\
(b) $\sin(\omega_n)=0$: (this case can occur only for special values of $L$, see remark \ref{rem:Qc}), we know by point 1, that $\omega^2_n\in\sigma({\cal A}_s)$ and we are going to show that it exists $\delta>0$ such that
$((\omega_n-\delta)^2,\omega_n^2)$ and $(\omega_n^2,(\omega_n+\delta)^2)$ are in the resolvant set of ${\cal A}_s$. This will show the existence of two disjoint gaps of the form $(\omega_n^2-l_n^-,\omega_n^2)\subset ((2\pi n/L)^2,\omega_n^2)$ and $(\omega_n^2,\omega_n^2+l_n^+)\subset (\omega_n^2,(2\pi (n+1)/L)^2)$. Setting $\omega = \omega_n+z$ in relation (\ref{caract_spectre_ess_sym}) leads to
\begin{equation}\label{help_proof}
f_n(z)=0\quad\text{where}\quad f_n(z):=2\sin{(z L/2)}(\cos{\omega_n}\cos{z}-\cos \theta)+\cos{\omega_n}\sin{z}\cos{(z L/2)} \; .
\end{equation}
We have 
\[
	f_n(z) = z(\cos\omega_n+L(\cos\omega_n-\cos\theta))+o(z)
\]
which cannot vanish for $0<|z|<\delta$ for $\delta$ small enough,
since $\cos\omega_n=\pm 1$. This implies that $\cos\omega_n+L(\cos\omega_n-\cos\theta)\neq0$ for all $\theta$.
\\\\
The conclusion follows from the fact that the intervals $((2\pi
n/L)^2,(2\pi (n+1)/L)^2)$ are disjoint, go to infinity with $n$,
and contain one or two gaps.
\end{enumerate}
\end{proof}
\begin{rem}\label{rem:Qc}
	The case 2.(b) of the above proof can occur only for special values of $L$. Indeed, the reader will easily verify that the existence of $\omega$ such that $\sin(\omega)=cos(\omega L/2)=0$ is equivalent to the fact that 
	\begin{equation} \label{defQc}
	L\in\mathbb{Q}_c := \big\{ q \in \mathbb{Q} \; / \; \exists \; (m,k)	\in \mathbb{N} \times \mathbb{N}^* \mbox{ such that } q = \frac{2m+1}{k} \mbox{ (irreducible fraction)} \big\}.
	\end{equation}
	In fact, the condition \eqref{defQc} also influences the nature of the spectrum of ${\cal A}_s$. Indeed it can be shown that when $L$ does not belong to $\mathbb{Q}_c$, the point spectrum of ${\cal A}_s$ is empty (i. e. the spectrum of ${\cal A}_s$ is purely continuous). When $L$ belongs to $\mathbb{Q}_c$, it coincides with an infinity of eigenvalues of infinite multiplicity, associated with compactly supported eigenfunctions.
	It is worth noting that the presence of such eigenvalues is a
	specific feature of periodic graphs (see 
	\cite[Section 5]{KuchmentQuantumgraphs2}). 
\end{rem}
\begin{rem}
 In the
proof, in the case 2.(a), gaps are located in the
vicinity of the points $\lambda$ satisfying $\cos{(\lambda L/2)}=0$. These
points are nothing else but the eigenvalues of the 1d Laplace operator
defined on the vertical half edges $ \{(x,y), x = j ,  -L/2< y < 0\}$
with Dirichlet boundary condition at $y=-L/2$ and Neumann boundary
condition at  $y=0$. The presence of gaps is therefore consistent with
\cite[Theorem 5]{KuchmentQuantumgraphs2} dealing with gaps created by 
so-called graph decorations. Indeed, the vertical half edges can be
seen as
decorations of the infinite periodic graph $\mathcal{G}_0$ made of the set of the horizontal edges $e_{j+1/2}^+$.  
\end{rem}
\noindent Next, we give a more precise description of the gap structure of $\sigma({\cal A}_s)$ through a geometrical interpretation of \eqref{caract_spectre_ess_sym}. 
We first remark that as soon as $\omega \notin\{\pi\mathbb{Z}\}\cup\{2\pi\mathbb{Z}/L\}$, $\lambda = \omega^2 \mbox{ belongs to } \sigma({\cal A}_s)$ (i. e. $\omega$ is solution of \eqref{caract_spectre_ess_sym}) if and only if
\begin{equation}
\label{phi_f}
\exists \; \theta\in[0,\pi] \quad \mbox{such that} \quad \cos{\theta}
\neq \cos{\omega} \quad \mbox{ and } \; \phi_L(\omega)=f({\theta},\omega),
\end{equation}
where the functions $\phi_L$ and $f$ are defined by 
\begin{equation}\label{def_phi_f} 
\quad\phi_L(\omega):=\frac{2}{\tan{(\omega L/2)}}, \quad f({\theta},\omega):=\frac{\sin{\omega}}{\cos{\omega}-\cos{\theta}}.
\end{equation}
In the following we reason in the $(\omega,y)$-plane  with $y$ an additional auxiliary variable. We introduce the domain $D$
\begin{equation} \label{domainD}
\displaystyle	D = \left \{ \big(\omega, f({\theta},\omega)\big),
  (\omega, \theta) \in \R \times [0,\pi] \,\mbox{and} \,  \cos{\omega} \neq \cos{\theta}
\right \}.
\end{equation}
\begin{lemme} The domain $D$ is the domain of the $(\omega,y)$-plane,
  $\pi$-periodic with respect to $\omega$, given by
	\begin{equation}
D = \bigcup_{n \in \mathbb{Z}}	\left\{ D_0 + (n \pi,0) \right\},\; \quad	D_0 :=  [0, \pi] \times \mathbb{R} \setminus \big\{ (\omega,y) \; / \; 0 < \omega < \pi , -\tan(\frac{\omega}{2})^{-1}< y < \tan(\frac{\omega}{2}) \big\}.	
	\end{equation}
%
\end{lemme}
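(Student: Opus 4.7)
The plan is to prove the description of $D$ in two stages: first establish the $\pi$-periodicity in $\omega$, which reduces the problem to the fundamental strip $\omega \in [0,\pi]$; then compute the image of $\theta \mapsto f(\theta,\omega)$ explicitly for each such $\omega$, which pins down $D_0$.

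For the periodicity, I would observe that $\sin(\omega+\pi)=-\sin\omega$ and $\cos(\omega+\pi)=-\cos\omega$, together with $-\cos\theta=\cos(\pi-\theta)$ for $\theta\in[0,\pi]$, yield
\[
f(\theta,\omega+\pi)=\frac{-\sin\omega}{-\cos\omega-\cos\theta}=\frac{\sin\omega}{\cos\omega-\cos(\pi-\theta)}=f(\pi-\theta,\omega).
\]
Since $\theta\mapsto\pi-\theta$ is an involution of $[0,\pi]$ preserving the admissibility condition $\cos\theta\neq\cos\omega$, the range of $\theta\mapsto f(\theta,\omega+\pi)$ coincides with that of $\theta\mapsto f(\theta,\omega)$. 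Hence $D=D+(\pi,0)$, so it suffices to characterize $D\cap([0,\pi]\times\R)$.

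For the fundamental strip, fix $\omega\in(0,\pi)$. The change of variable $c:=\cos\theta$ is a decreasing bijection $[0,\pi]\to[-1,1]$, so describing the vertical slice $D\cap(\{\omega\}\times\R)$ reduces to describing the image of the Möbius-type function $c\mapsto g(c):=\sin\omega/(\cos\omega-c)$ on $[-1,1]\setminus\{\cos\omega\}$. Since $\sin\omega>0$, $g$ is strictly increasing on each of the two subintervals $[-1,\cos\omega)$ and $(\cos\omega,1]$, with a vertical asymptote at $c=\cos\omega$. The boundary values are computed using the half-angle identities:
\[
g(-1)=\frac{\sin\omega}{1+\cos\omega}=\tan(\omega/2),\qquad g(1)=\frac{\sin\omega}{\cos\omega-1}=-\frac{1}{\tan(\omega/2)}.
\]
Therefore the image of $g$ is exactly $(-\infty,-\tan(\omega/2)^{-1}]\cup[\tan(\omega/2),+\infty)$, whose complement in $\R$ is the open interval $(-\tan(\omega/2)^{-1},\tan(\omega/2))$ removed in the definition of $D_0$.

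It remains to handle the boundary values $\omega=0$ and $\omega=\pi$, which only contribute $y=0$ since $\sin\omega=0$; these points already belong to $D_0$ as defined. Combined with the periodicity reconstruction, this yields the stated formula $D=\bigcup_{n\in\Z}\{D_0+(n\pi,0)\}$. No real obstacle is expected: the proof reduces to the monotonicity analysis on each branch of the rational function $g$ and the half-angle simplification at the endpoints, with the only subtle point being to keep track of which branch of $g$ corresponds to which sign of $\cos\omega-c$.
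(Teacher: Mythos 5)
Your proof is correct and follows essentially the same route as the paper: the identity $f(\theta,\omega+\pi)=f(\pi-\theta,\omega)$ for the $\pi$-periodicity, then a slice-by-slice computation of the range of $\theta\mapsto f(\theta,\omega)$ for fixed $\omega\in(0,\pi)$, with the half-angle values $\tan(\omega/2)$ and $-\tan(\omega/2)^{-1}$ attained at $\theta=\pi$ and $\theta=0$. The only cosmetic difference is your substitution $c=\cos\theta$, which makes the monotonicity on each branch immediate, whereas the paper works directly with $\theta$ and describes the same two branches as the intervals $[0,\omega)$ and $(\omega,\pi]$ separated by the pole at $\cos\theta=\cos\omega$.
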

\begin{proof}
The $\pi$-periodicity of the domain $D$ with respect to $\omega$
follows from the identity $f(\theta, \omega + \pi) = f( \pi - \theta,
\omega)$. To conclude, it suffices to remark that, for a given $\omega \in (0,
\pi)$, if $\theta $
varies in the interval $[0, \omega)$ $\theta \mapsto f(\theta,
\omega)$ is continuous and strictly decreasing from $-
\left(\tan(\omega/2)\right)^{-1}$ to $- \infty$ while,  if $\theta$
varies in the interval $(\omega, \pi]$, $\theta \mapsto f(\theta,
\omega)$ is continuous and strictly decreasing from $+\infty$ to
$\tan({\omega}/{2})$.
\end{proof} 
\noindent Thanks to Proposition~\ref{result_sym} and the characterization~\eqref{phi_f}, we have
\begin{equation}\label{geometricalCharacterization}
\sigma({\cal A}_s) = \sigma_{2,s} \cup \sigma_{L,s} \cup \left\{ \omega^2
  \notin \sigma_{L,s} \, / \, (\omega,\phi_L(\omega)) \in D\right\}.
\end{equation}
\begin{figure}[htbp]
\centering
\subfloat{\label{Explication1-b}\includegraphics[width=0.45\textwidth, trim=7cm 6cm
  7cm 6cm, clip]{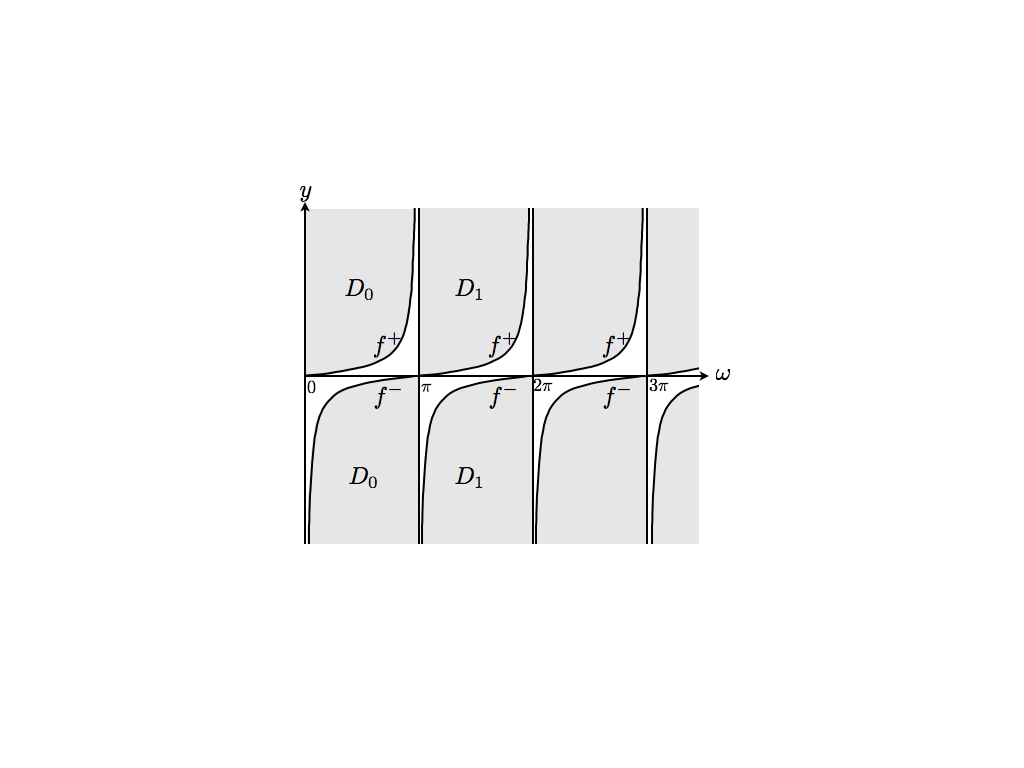}}
\subfloat{\label{Explication1-c}\includegraphics[width=0.45\textwidth,trim=7cm 6cm
  7cm 6cm, clip]{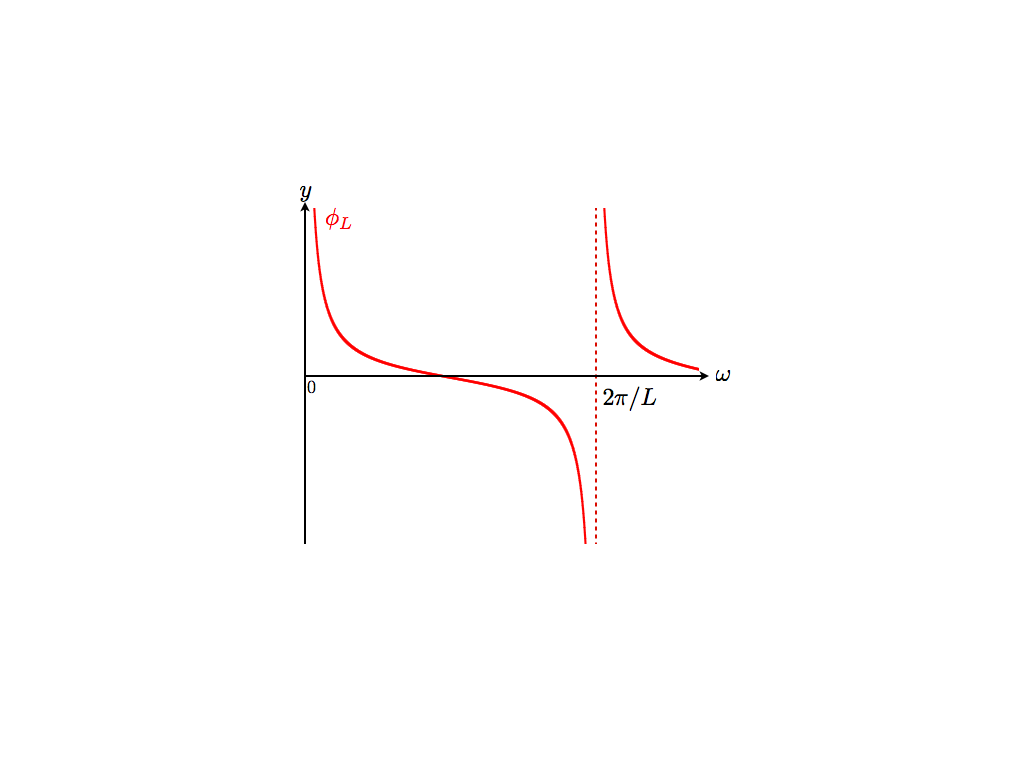}}
\caption{\label{Explication1}Representation of the $D$ (grey part) and the
  curve $\mathcal{C}_L$ (for $L=8$).}
\end{figure}
\begin{figure}[htbp]
\centering
\subfloat[$L=8$]{\label{Explication2-b}\includegraphics[width=0.45\textwidth, trim=7.5cm 6cm
  5.3cm 6cm, clip]{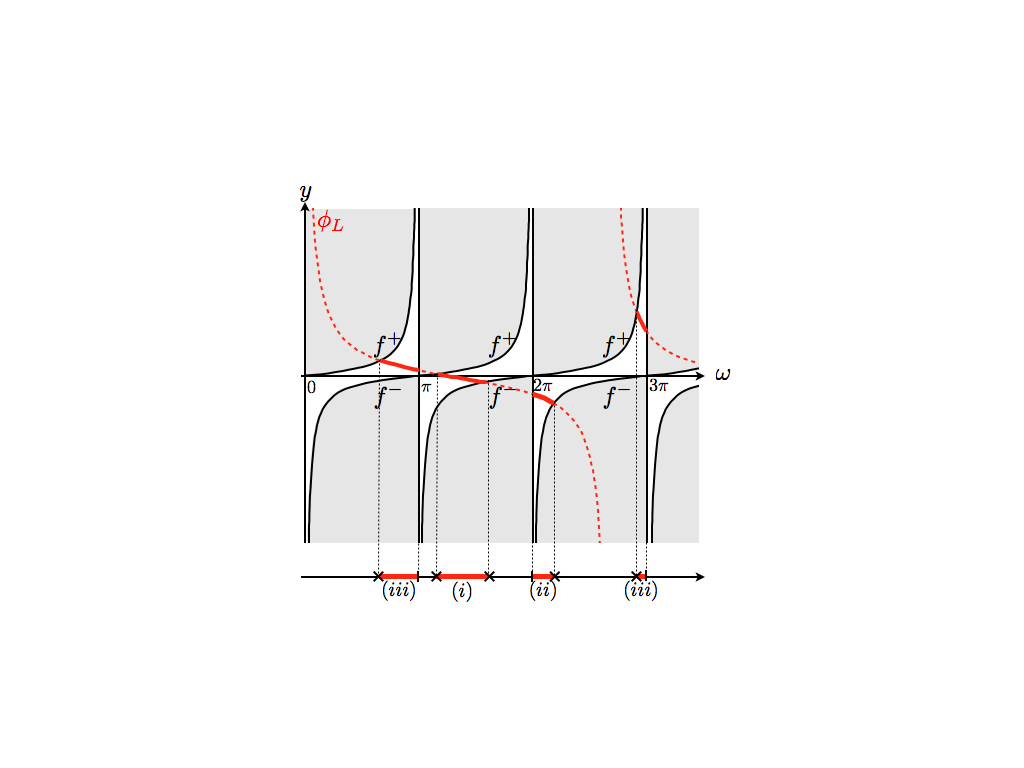}}
\subfloat[$L=10 \pi/7$]{\label{Explication2-c}\includegraphics[width=0.45\textwidth,trim=7.5cm 6cm
  5.3cm 6cm, clip]{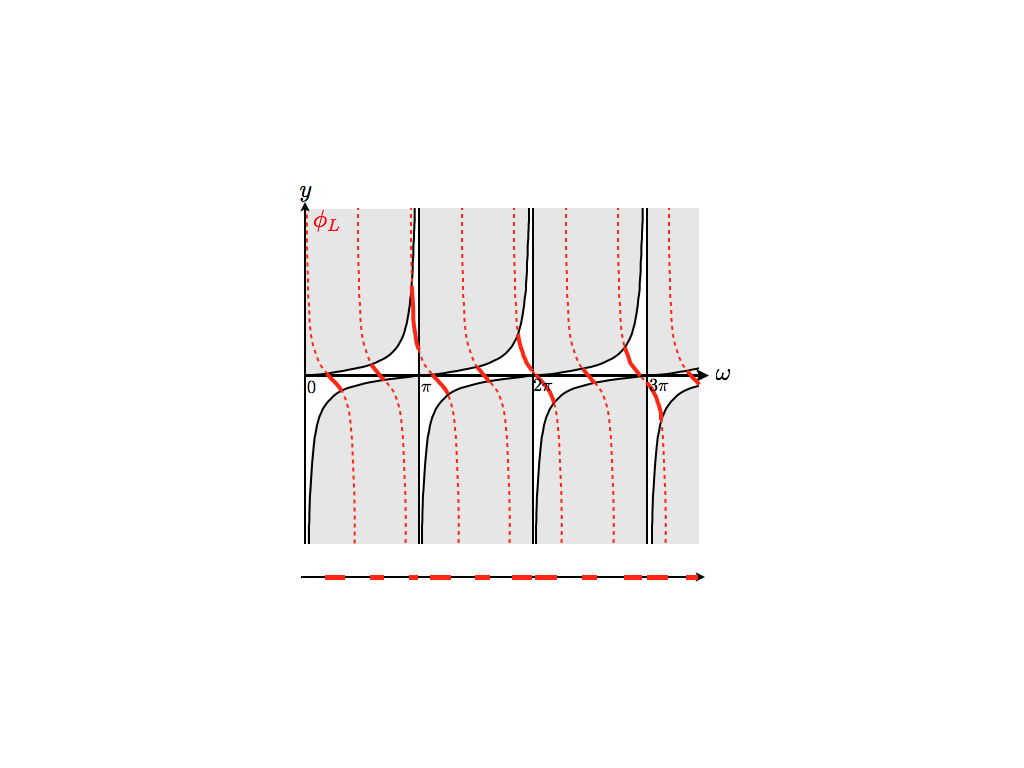}}
\caption{The images of the spectral gaps by $x \mapsto \sqrt{x}$. In
 the left picture, the three types of gaps are distinguished
 (according to the legend). \label{Explication2}}
\end{figure}
 \begin{figure}[htbp]
 \begin{center}
 \includegraphics[scale=0.4, trim = 4cm 6cm 3cm 7cm, clip]{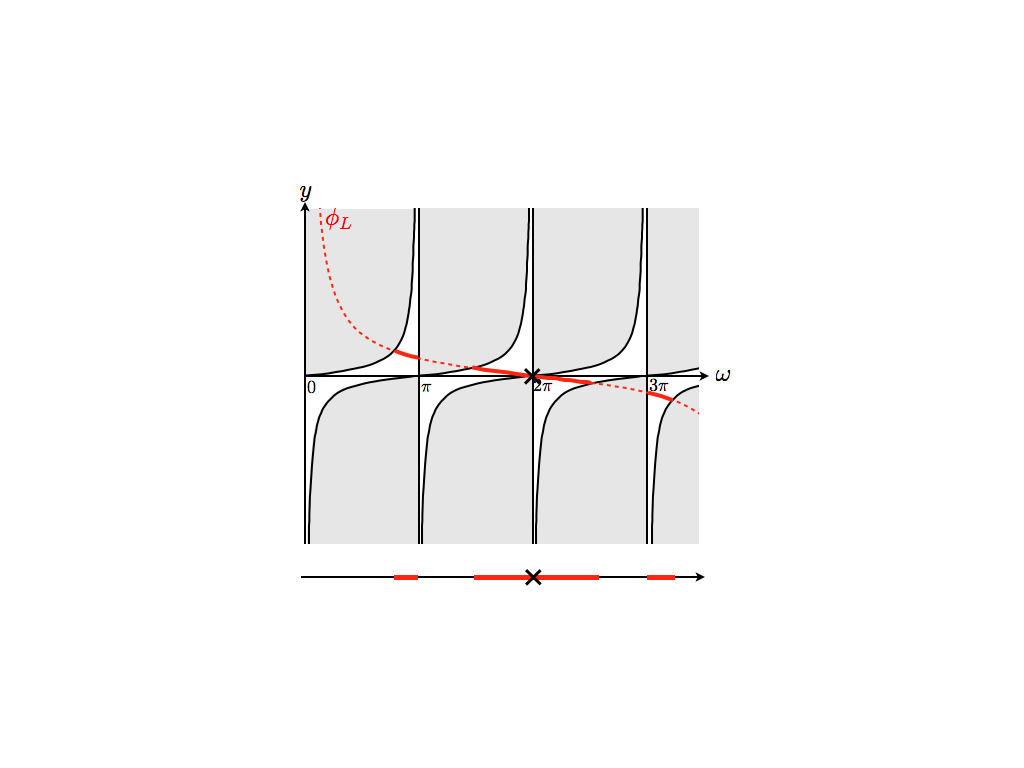}
 \caption{An example of eigenvalue of infinite
     multiplicity ($\omega = 2 \pi$)  obtained for $L=1/2$.  This eigenvalue separates a gap of type (ii) on
     the left from a gap of type (iii) on the right.  This occurs for
     $L \in \mathbb{Q}_c$.}
\label{Particular cases}
 \end{center}
 \end{figure}
\noindent In other words, $\sigma({\cal A}_s)$ is the union of $\sigma_{2,s}$,
$\sigma_{L,s}$, and  the image by the application $x \mapsto x^2$ of the projection on the
line $y=0$ of the intersection of the  domain
$D$ with the curve $\mathcal{C}_L=\left\{ (\omega, \phi_L(\omega)),
  \omega \in \mathbb{R}\right\}$. Thanks to this geometrical
characterization, we shall be able to describe the structure of the
gaps of the operator ${\cal A}_s$. \\

\noindent Let us introduce the $\pi$-periodic functions $f^\pm : \mathbb{\R}
\rightarrow \mathbb{\R}^\pm$, such that, for any $\omega \in [0, \pi)$,
$$
f^+(\omega) = \tan \frac{\omega}{2} \quad \mbox{and} \quad  f^-(\omega) = -\left( \tan \frac{\omega}{2}\right)^{-1}.
$$
\noindent The easy  proof of the following result is left to the reader (see
also Figures~\ref{Explication1}, ~\ref{Explication2} and \ref{Particular cases}):
\begin{prop}
\label{gaps_ends}
An interval $(\omega_b^2 ,\omega_t^2)$ is a gap of the operator ${\cal A}_s$
if and only if $[\omega_b, \omega_t] \cap \frac{2 \pi \Z}{L} = \emptyset$ and 
one of the following three possibilities holds:
\begin{itemize} 
\item[(i)] There exists $n\in \Z$ such that $\pi n <  \omega_b <  \omega_t
  < \pi (n+1)$, and, $\phi_L(\omega_b) = f^+(\omega_b)$,
  $\phi_L(\omega_t) = f^-(\omega_t)$. 
\item[(ii)]  There exists $n\in \Z$ such that $\pi n =  \omega_b <  \omega_t
  < \pi (n+1)$, and $\phi_L(\omega_b) \leq 0$,
  $\phi_L(\omega_t) = f^-(\omega_t)$. 
\item[(iii)]  There exists $n\in \Z$ such that $\pi n <  \omega_b <  \omega_t
  = \pi (n+1)$, and $\phi_L(\omega_b) =f^+(\omega_b)$,
  $\phi_L(\omega_t) \geq 0$. 
\end{itemize}   

\end{prop}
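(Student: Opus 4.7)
The plan is to read the gaps directly from the geometric characterization \eqref{geometricalCharacterization}. For $\omega > 0$ with $\omega \notin \pi \mathbb{Z} \cup \frac{2\pi \mathbb{Z}}{L}$, one has $\omega^2 \notin \sigma(\mathcal{A}_s)$ if and only if $(\omega, \phi_L(\omega)) \notin D$, which by the preceding lemma amounts to $f^-(\omega) < \phi_L(\omega) < f^+(\omega)$. Describing the gaps of $\mathcal{A}_s$ thus reduces to describing the maximal open intervals $(\omega_b, \omega_t)$ of the $\omega$-axis on which the curve $y = \phi_L(\omega)$ strictly lies between the curves $y = f^\pm(\omega)$ and avoids the exceptional vertical lines $\omega \in \pi\mathbb{Z} \cup \frac{2\pi\mathbb{Z}}{L}$.

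Two structural facts drive the analysis: (a) $\phi_L$ is smooth and strictly decreasing on each continuity interval $\bigl(\tfrac{2\pi k}{L}, \tfrac{2\pi(k+1)}{L}\bigr)$, with $\phi_L(\omega) \to +\infty$ at the left endpoint and $\phi_L(\omega) \to -\infty$ at the right; (b) on each interval $(\pi n, \pi(n+1))$, both $f^-$ and $f^+$ are continuous and strictly increasing, $f^+$ going from $0$ to $+\infty$ and $f^-$ going from $-\infty$ to $0$, while both functions jump across $\pi n$. Using (a) together with the fact that $\bigl(\tfrac{2\pi k}{L}\bigr)^2 \in \sigma_{L,s}$, any element of $\frac{2\pi \mathbb{Z}}{L}$ inside $[\omega_b, \omega_t]$ leads to a contradiction (either it already lies in the spectrum, or the blow-up of $|\phi_L|$ pulls a one-sided neighbourhood of that point into $D$). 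Hence $[\omega_b, \omega_t] \cap \frac{2\pi \mathbb{Z}}{L} = \emptyset$ and the gap lies entirely in a single continuity interval of $\phi_L$; the exclusion of $\pi \mathbb{Z}$ from $(\omega_b, \omega_t)$ is immediate from $(\pi n)^2 \in \sigma_{2,s}$, so there is a unique $n$ with $\pi n \leq \omega_b < \omega_t \leq \pi(n+1)$.

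It remains to analyse the endpoints. If $\omega_b > \pi n$, continuity of $\phi_L$ and of $f^\pm$ at $\omega_b$, together with the passage from spectrum to gap at $\omega_b^+$, forces $\phi_L(\omega_b) \in \{f^-(\omega_b), f^+(\omega_b)\}$; among these two, the decrease of $\phi_L$ and the increase of $f^+$ eliminate the lower choice (it would push $\phi_L$ below $f^-$ immediately to the right), leaving $\phi_L(\omega_b) = f^+(\omega_b)$. Symmetrically, $\omega_t < \pi(n+1)$ forces $\phi_L(\omega_t) = f^-(\omega_t)$. In the boundary case $\omega_b = \pi n$, continuity of $\phi_L$ at $\pi n$ (guaranteed by the first step) combined with $f^+(\omega) \to 0^+$ as $\omega \to \pi n^+$ yields $\phi_L(\pi n) \leq 0$; symmetrically $\omega_t = \pi(n+1)$ forces $\phi_L(\pi(n+1)) \geq 0$. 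The combinations $(\omega_b > \pi n,\; \omega_t < \pi(n+1))$, $(\omega_b = \pi n,\; \omega_t < \pi(n+1))$, and $(\omega_b > \pi n,\; \omega_t = \pi(n+1))$ produce exactly cases (i), (ii), (iii); the fourth combination is ruled out since $\phi_L(\pi n) \leq 0 \leq \phi_L(\pi(n+1))$ together with strict monotonicity of $\phi_L$ on $[\pi n, \pi(n+1)]$ is impossible.

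The converse implication is then a direct check: each of (i), (ii), (iii) forces, by the same monotonicity of $\phi_L$ and $f^\pm$, the inclusion $\phi_L(\omega) \in (f^-(\omega), f^+(\omega))$ on the open interval $(\omega_b, \omega_t)$, while the endpoints produce spectral points (either by meeting $f^\pm$, or by belonging to $\pi \mathbb{Z}$). The main and essentially only subtlety is the correct accounting at $\omega \in \pi \mathbb{Z}$ where the functions $f^\pm$ are discontinuous; beyond this bookkeeping, everything follows from strict monotonicity, which is presumably why the authors mark the proof as easy and leave it to the reader.
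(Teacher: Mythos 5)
Your proof is correct and follows exactly the route the paper intends: the authors leave the proof to the reader after setting up the geometric characterization \eqref{geometricalCharacterization}, the lemma describing $D$, and the functions $f^\pm$, and your argument is precisely the monotonicity/continuity bookkeeping they have in mind (the only blemish is the parenthetical "increase of $f^+$" where you mean the increase of $f^-$, but the surrounding sentence makes the intended argument clear).
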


\subsection{The discrete spectrum of ${\cal A}_s^{\mu}$}\label{SubsectionSprectreDiscretAsmu}

We are now interested in determining the discrete spectrum of
${\cal A}_s^{\mu}$. Suppose that $\omega^2$ is not in the essential spectrum
of ${\cal A}_s^{\mu}$, which implies in particular that $\omega \notin \pi
\Z$ (see Prop.~\ref{result_sym}). Let $u$ be a corresponding eigenfunction and let $\textbf{u}_j =
u(M_j^-) = u(M_j^+)$ (we consider symmetric functions). Since  
the eigenfunction $u$ verifies the equation $-u''+\omega^2u=0$ on each
horizontal edge of the graph $\mathcal{G}$, one has 
\begin{align}
&
u_{j+\frac{1}{2}}(s)=\textbf{u}_j\frac{\sin{(\omega(1-s))}}{\sin{\omega}}+\textbf{u}_{j+1}\frac{\sin{(\omega
    s)}}{\sin{\omega}},  &s:= x-j \in[0,1],\qquad & \forall
j\in\mathbb{Z}.\label{uj_def}
\end{align}
We
first begin by excluding some particular cases: 
\begin{lemme}
If $\cos \frac{\omega L}{2} = 0$, then $\omega^2$ is not in the
discrete spectrum of ${\cal A}_s^\mu$.
\end{lemme}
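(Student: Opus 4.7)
The plan is to exploit the symmetry of $u$ together with the structure of the ODE on each edge to show that, under the hypothesis $\cos(\omega L/2)=0$, any eigenfunction of $\mathcal{A}_s^\mu$ at $\omega^2$ must vanish identically.

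First, since we assume $\omega^2$ lies outside the essential spectrum, Proposition~\ref{result_sym} forces $\omega\notin\pi\Z$, so the representation \eqref{uj_def} on every horizontal edge is meaningful. On each vertical edge $e_j$, $u_j$ solves $-u_j''=\omega^2 u_j$ on $(-L/2,L/2)$, and the membership of $u$ in $L_{2,s}^\mu(\mathcal{G})$ forces $u_j$ to be even in $y$. Consequently
\[
u_j(y)=C_j\,\cos(\omega y),\qquad \textbf{u}_j=u_j(\pm L/2)=C_j\,\cos(\omega L/2)=0,
\]
by the hypothesis $\cos(\omega L/2)=0$. Plugging $\textbf{u}_j=\textbf{u}_{j+1}=0$ into \eqref{uj_def} shows that every horizontal restriction $u_{j+1/2}^\pm$ vanishes identically.

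Second, I would evaluate the Kirchhoff condition \eqref{Kirchoff_def} at each vertex $M_j^\pm$. The derivatives of the horizontal edges contribute nothing because those functions are identically zero. For the vertical edge $e_j$, the derivative in the outgoing direction equals $+u_j'(-L/2)$ at $M_j^-$ and $-u_j'(L/2)$ at $M_j^+$; both are equal to $C_j\,\omega\,\sin(\omega L/2)$. Hence Kirchhoff at $M_j^\pm$ reduces to
\[
w^\mu(e_j)\,C_j\,\omega\,\sin(\omega L/2)=0.
\]
Since $\cos(\omega L/2)=0$ implies $\omega\neq 0$ and $\sin(\omega L/2)=\pm 1$, and since $w^\mu(e_j)>0$ for every $j$ (including the perturbed edge $e_0$, where $w^\mu(e_0)=\mu>0$), this yields $C_j=0$ for all $j\in\Z$. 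Therefore $u\equiv 0$ on $\mathcal{G}$, contradicting the assumption that $u$ is an eigenfunction. The conclusion is that $\omega^2$ cannot be an eigenvalue of $\mathcal{A}_s^\mu$, hence not in its discrete spectrum.

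There is essentially no hard step: the argument is a direct verification once one observes that the Neumann--symmetry condition reduces the vertical profile to a single cosine. The only subtlety worth flagging is the role of the weight $\mu$ at the perturbed vertices $M_0^\pm$, which does not alter the argument precisely because only positivity of $w^\mu(e_0)$ is used.
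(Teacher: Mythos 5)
Your proof is correct, but it takes a genuinely different route from the paper's. The paper disposes of the lemma by arguing that $\omega^2$ always lies in the essential spectrum: for $\omega\in\pi\Z$ it invokes Proposition~\ref{result_sym} (since $(\pi n)^2\in\sigma_{2,s}$), and for $\omega\notin\pi\Z$ it appeals to Remark~\ref{rem:Qc} and the eigenvalues of infinite multiplicity. You instead work under the subsection's standing hypothesis that $\omega^2\notin\sigma_{ess}(\mathcal{A}_s^\mu)$ (the complementary case is vacuous since $\sigma_{disc}\cap\sigma_{ess}=\emptyset$ by definition --- it would be worth saying this explicitly) and show by a direct ODE computation that no $L^2$ eigenfunction exists: symmetry forces $u_j(y)=C_j\cos(\omega y)$, the hypothesis $\cos(\omega L/2)=0$ annihilates the vertex values, uniqueness on the horizontal edges (legitimate because $\omega\notin\pi\Z$, hence $\sin\omega\neq 0$) annihilates the horizontal restrictions, and Kirchhoff then forces $C_j=0$ because $|\sin(\omega L/2)|=1$ and $w^\mu(e_j)>0$. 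Your computation is self-contained and in fact treats the case $\omega\notin\pi\Z$ more transparently than the paper: by case 2(a) of the proof of Proposition~\ref{result_sym}, such an $\omega^2$ lies in a \emph{gap} of $\mathcal{A}_s$, so it cannot simultaneously be an eigenvalue of infinite multiplicity (which would place it in the essential spectrum); what actually needs to be shown there is precisely your statement that it is not an eigenvalue of $\mathcal{A}_s^\mu$ at all. The paper's argument buys brevity by recycling previously established spectral facts; yours buys an elementary verification that is independent of the arithmetic nature of $L$ and makes clear why the weight $\mu$ is irrelevant. The sign convention you adopt for the outgoing derivative is the opposite of what some readers may expect, but the conclusion $C_j=0$ is unaffected.
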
  
\begin{proof}
If $\cos \frac{\omega L}{2} = 0$ and $\omega \notin \pi \Z$, then $\omega^2$ is an eigenvalue of infinite multiplicity (see Remark~\ref{rem:Qc}). Thus, it does not belong to the discrete spectrum of ${\cal A}_s^\mu$.   Similarly,  if $\cos \frac{\omega L}{2} = 0$ and $\omega \in \pi \Z$, then $\omega^2 \in \sigma_{ess}({\cal A}_s^\mu)$ (Prop.~\ref{result_sym}), which implies that it does not belong to the discrete spectrum of ${\cal A}_s^\mu$. 
\end{proof} 
\noindent Thus we can assume that $\cos \frac{\omega L}{2} \neq 0$.  In
this case, on the
vertical edges $e_j$, $u_j$ is given by 
\begin{align}
& u_{j}(y)=\textbf{u}_j\frac{\cos{(\omega y)}}{\cos{(\omega L/2)}},&y\in[-L/2,L/2],\qquad & \forall j\in\mathbb{Z}\label{uj1_def}.
\end{align} 
According to \eqref{uj_def}-\eqref{uj1_def}, the function $u$ is completely
determined by the point values $\textbf{u}_j$. Moreover, in order to
ensure that $u \in L^2(\mathcal{G})$, the sequence $\mathbf{u}_j$ must
be square integrable: 
\begin{equation}\label{conditionl2}
\sum_{j \in \Z } |\mathbf{u}_j|^2 < + \infty.
\end{equation}

\noindent It remains to express that $u$
belongs to $D({\cal A}_s^\mu)$ (see \eqref{Kirchoff_def}), i.e.  Kirchhoff's conditions
are satisfied. 
Doing so, we obtain the following set of finite difference equations:
\begin{align}
&\textbf{u}_{j+1}+2 \, g(\omega) \, \textbf{u}_j+\textbf{u}_{j-1}=0,\qquad j\in\mathbb{Z}^*,\label{finitediff}\\[1ex]
&\textbf{u}_{1}+2 \, g_{\mu}(\omega) \, \textbf{u}_0+\textbf{u}_{-1}=0,\label{finitediffzero}
\end{align}
with
\begin{align}
&g(\omega)=-\cos{\omega}+ \frac{\sin{\omega}}{\phi_L(\omega)}, \label{g}\\
&g^{\mu}(\omega)=-\cos{\omega}+\mu\frac{\sin{\omega}}{\phi_L(\omega)} \label{gmu}.
\end{align}
where $\phi_L$ is defined in \eqref{def_phi_f}.
Thus, we reduced the initial problem for a differential operator on the graph to a problem for a finite difference operator acting on sequences $\{\textbf{u}_j\}_{j\in\mathbb{Z}}$. Looking for particular solutions of \eqref{finitediff} for $j<0$ and
$j>0$ under the form $\mathbf{u_j} = r^j$ leads to 
the characteristic equation
\begin{equation}
\label{equation_quadratique}
r^2+2 \, g(\omega) \, r+1=0.
\end{equation}
At this point, we observe the following property
\begin{lemme} \label{LemmeSchnoll}As soon as $\cos( \omega L /2) \neq 0$, one has the
  equivalence
$$
\omega^2 \in \sigma_{ess}({\cal A}_s) \quad \Leftrightarrow \quad | g(\omega) | \leq 1.
$$
\end{lemme}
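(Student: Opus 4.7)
The plan is to start from the explicit dispersion relation established in Proposition~\ref{eigenvalues_sym} and simply rewrite it in terms of $g(\omega)$. Recall that $\omega^{2}\in\sigma({\cal A}_s(\theta))$ if and only if
\[
2\cos(\omega L/2)\bigl(\cos\omega-\cos\theta\bigr)=\sin\omega\,\sin(\omega L/2).
\]
Under the hypothesis $\cos(\omega L/2)\neq 0$, I would divide both sides by $2\cos(\omega L/2)$ and use the definition $\phi_L(\omega)=2/\tan(\omega L/2)$ to obtain the equivalent scalar equation
\[
\cos\theta=\cos\omega-\frac{\sin\omega}{\phi_L(\omega)}=-g(\omega),
\]
where the last equality comes from the definition \eqref{g} of $g$.

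Next, I would invoke the Floquet--Bloch decomposition \eqref{spectrum}, $\sigma({\cal A}_s)=\bigcup_{\theta\in[0,\pi]}\sigma({\cal A}_s(\theta))$, to rephrase membership in the spectrum as the existence of some $\theta\in[0,\pi]$ realising the relation above. Since, as $\theta$ ranges over $[0,\pi]$, the quantity $\cos\theta$ takes exactly the values of the interval $[-1,1]$, the existence of such a $\theta$ is equivalent to $-g(\omega)\in[-1,1]$, i.e.\ $|g(\omega)|\leq 1$. This shows the equivalence $\omega^{2}\in\sigma({\cal A}_s)\iff|g(\omega)|\leq 1$.

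Finally, to upgrade this to the essential spectrum, I would use the fact that ${\cal A}_s$ is a purely periodic self-adjoint operator: by the Floquet--Bloch theory recalled at the beginning of Section~\ref{SectionGrapheSym} (and the band-gap structure \eqref{spectrum_intervals_general} transposed to the graph setting), the whole spectrum of ${\cal A}_s$ is essential, so $\sigma({\cal A}_s)=\sigma_{ess}({\cal A}_s)$, and the conclusion follows. The argument is essentially algebraic and I do not anticipate any serious obstacle; the only minor point to be careful about is the harmless division by $\cos(\omega L/2)$, which is legitimate precisely because of the hypothesis of the lemma.
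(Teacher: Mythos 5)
Your proof is correct and follows essentially the same route as the paper: divide the dispersion relation of Proposition~\ref{eigenvalues_sym} by $\cos(\omega L/2)$ and observe that the existence of $\theta\in[0,\pi]$ with $\cos\theta=-g(\omega)$ is exactly $|g(\omega)|\leq 1$. (You are in fact more careful than the paper on two harmless points: the sign, since the paper writes $\cos\theta=g(\omega)$, which changes nothing because only $|g(\omega)|\leq 1$ matters, and the identification $\sigma({\cal A}_s)=\sigma_{ess}({\cal A}_s)$, which the paper leaves implicit.)
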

\begin{proof}
Indeed,   $|g(\omega)| \leq 1$ is equivalent to the existence of  $\theta \in [0, \pi]$
such that 
$$
\cos(\theta) = g(\omega) = -\cos{\omega}+\frac{1}{2}\sin{\omega}\tan{({\omega L}/{2})}.
$$
Since $\cos( \omega L /2) \neq 0$, this is equivalent to the
characterization~(\ref{caract_spectre_ess_sym}) of the essential spectrum.
\end{proof}
\noindent Since $\omega^2$ does not belong to the essential spectrum of
 ${\cal A}_s^\mu$, $|g(\omega)|>1$ and the discriminant $D(\omega)$ of (\ref{equation_quadratique})
is strictly positive, which means that (\ref{equation_quadratique})
has two distinct real solutions. Since the product of
these solutions is equal to one,  (\ref{equation_quadratique}) has a
unique solution $r(\omega) \in (-1,1)$ given by 
\begin{equation}
\label{r}
r(\omega)=-g(\omega)+sign(g(\omega))\sqrt{g^{2}(\omega)-1}.
\end{equation}
Joining \eqref{conditionl2}
and \eqref{finitediff}, we deduce that there exists a constant $A \neq
0$ 
\begin{equation}
\label{eigenfun_sym}
\textbf{u}_j=A \; r(\omega)^{|j|},\qquad j\in\mathbb{Z}.
\end{equation}   
It remains to enforce the Kirchhoff condition~\eqref{finitediffzero},
which leads to 
\begin{equation}
\label{rel_r_gmu}
r(\omega)=-g^{\mu}(\omega).
\end{equation}
Taking into account (\ref{g}), (\ref{gmu}) and (\ref{r}), we arrive at the following relation:
\begin{equation*}
sign(g(\omega))\sqrt{g^2(\omega)-1}=(1-\mu)(g(\omega)+\cos{\omega}).
\end{equation*}
Since $|g(\omega)|>1$, $sign(g(\omega)) = sign(g(\omega) +
\cos{\omega}) $, we can rewrite the previous equality  as
\begin{equation}
\label{mu}
F(\omega) = \mu  \quad \mbox{where} \; F(\omega):=1-\sqrt{ \frac{g^2(\omega)-1}{(g(\omega)+\cos{\omega})^2}}.
\end{equation}
For the rest of the analysis,  it is useful to rewrite $F(\omega)$ (using~\eqref{g}) as 
\begin{equation*}
F(\omega)=1-\sqrt{1-\phi_L(\omega)\left(\phi_L(\omega)+\phi_2(\omega)\right)},
\end{equation*}
where $\phi_2(\omega) = 2/\tan \omega$. 

\begin{rem} \label{RemContinuiteF} Let $(\omega_b^2, \omega_t^2)$ be a gap of the operator $\mathcal{A}_s^\mu$.  Since $|g(\omega)|>1$ in $(\omega_b, \omega_t)$, $F$ is well defined and continuous in $(\omega_b, \omega_t)$.  However, $F$ might blow up (together with the function $\phi_2$) as $\omega$ tends to $\omega_t$ or $\omega_b$, i.e. at the extremities of the gap. 
\end{rem}
\begin{thm}\label{existence_sym}
For $\mu\geq 1$, the discrete spectrum of the operator $\mathcal{A}_s^{\mu}$ is empty. For $0<\mu<1$, let $(\omega_b^2, \omega_t^2)$ be a gap of the operator $\mathcal{A}_s^\mu$: 
\begin{enumerate}
\item[(a)] If $(\omega_b^2, \omega_t^2)$ is a gap of type (i), then $\mathcal{A}_s^\mu$  has exactly two simple eigenvalues $\lambda_1 = \omega_1^2$ and $\lambda_2= \omega_2^2$ that satisfy
$\omega_b < \omega_1 < \omega_2< \omega_t$.
\item[(b)] If $(\omega_b^2, \omega_t^2)$ is a gap of type (ii) or (iii), then $\mathcal{A}_s^\mu$  has  exactly one simple eigenvalue $\lambda_1 = \omega_1^2$ such that $\omega_b < \omega_1 < \omega_t$.  
\end{enumerate}   
 \end{thm}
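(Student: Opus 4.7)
The plan is to count, for each gap of the essential spectrum, the solutions of the eigenvalue equation $F(\omega)=\mu$ established in \eqref{mu}. A convenient preliminary is to rewrite $F(\omega)=\mu$ as $h(\omega) = 1-(1-\mu)^2$, where $h(\omega):=\phi_L(\omega)(\phi_L(\omega)+\phi_2(\omega))$, and to note, via the identity $1-h=(g(\omega)^2-1)/(g(\omega)+\cos\omega)^2$ together with Lemma~\ref{LemmeSchnoll}, that $h$ is continuous on any gap, that $h(\omega)<1$ strictly in the interior of the gap, and that $h(\omega)\to 1$ at any endpoint where $|g|=1$. In particular $F<1$ throughout every gap, so $F(\omega)=\mu$ has no solution when $\mu\geq 1$, which proves the first assertion.

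For $0<\mu<1$ I would analyze the boundary behavior of $h$ gap by gap. For a type (i) gap $(\omega_b^2,\omega_t^2)$ both endpoints satisfy $|g|=1$, so $F\to 1$ at each end; since $[\omega_b,\omega_t]$ contains no pole of $\phi_L$ (by Proposition~\ref{gaps_ends}) and no pole of $\phi_2$ (as $\omega_b,\omega_t\in(n\pi,(n+1)\pi)$), both functions are continuous there, and the type-(i) conditions $\phi_L(\omega_b)=f^+(\omega_b)>0$ and $\phi_L(\omega_t)=f^-(\omega_t)<0$, combined with the strict decrease of $\phi_L$ between its poles, produce a unique zero $\omega^*\in(\omega_b,\omega_t)$ at which $h(\omega^*)=F(\omega^*)=0$. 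For a type (ii) gap, the endpoint $\omega_b=n\pi$ is a pole of $\phi_2$ with $\phi_2(\omega)\to+\infty$ as $\omega\to\omega_b^+$, while $\phi_L(\omega_b)<0$ strictly (the borderline $\phi_L(\omega_b)=0$ forces $L\in\mathbb{Q}_c$ and makes $\omega_b^2$ an eigenvalue of infinite multiplicity, a situation excluded from the gap); hence $h(\omega)\to -\infty$ at $\omega_b^+$ and $h(\omega)\to 1$ at $\omega_t^-$. Type (iii) is treated symmetrically.

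To conclude I would then establish strict monotonicity of $F$: decreasing on $(\omega_b,\omega^*)$ and increasing on $(\omega^*,\omega_t)$ in case (a), and monotone on the whole $(\omega_b,\omega_t)$ in case (b). Combined with the boundary behavior above, the intermediate value theorem produces exactly two (respectively one) solutions of $F(\omega)=\mu$ for every $\mu\in(0,1)$, and simplicity of each eigenvalue is immediate from \eqref{eigenfun_sym}: once $\omega$ is fixed, the eigenfunction is determined up to the single constant $A$, so each eigenspace is one-dimensional. The main obstacle of this plan is the strict monotonicity assertion. I would approach it by direct differentiation, using the explicit formulas $\phi_L'(\omega)=-L(4+\phi_L^2)/4<0$ and $\phi_2'(\omega)=-(4+\phi_2^2)/2<0$, and exploiting the sign of $\phi_L$ on each subinterval; in case of difficulty, one may recast $h$ in the compact form $h(\omega)=4\cos(\omega L/2)\sin(\omega(L+2)/2)/(\sin^2(\omega L/2)\sin\omega)$ to factor the numerator of the derivative.
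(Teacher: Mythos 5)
Your overall strategy is the paper's: prove $F<1$ on every gap to kill the case $\mu\geq 1$, then count solutions of $F(\omega)=\mu$ gap by gap via boundary values and variations of $h=\phi_L(\phi_L+\phi_2)$, with simplicity coming from \eqref{eigenfun_sym}. The first part and the simplicity argument are fine. But there is a genuine gap in your analysis of type (i): the function $h=\phi_L\,\varphi$ with $\varphi=\phi_L+\phi_2$ generically has \emph{two} zeros in $(\omega_b,\omega_t)$, not one --- the zero $c$ of $\phi_L$ (where $\omega=(2k+1)\pi/L$) and the zero $d$ of $\varphi$, and these coincide only for special values of $L$. You only identify $c$ and then assert that $F$ is strictly decreasing on $(\omega_b,\omega^*)$ and increasing on $(\omega^*,\omega_t)$. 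When $c\neq d$ this is false: $F$ vanishes at both $c$ and $d$ and is strictly negative on the open interval between them, so it cannot be monotone on either side of a single interior point. Your proposed "direct differentiation" would therefore be attempting to prove a false statement, and indeed $h'=\phi_L'\varphi+\phi_L\varphi'$ has indeterminate sign wherever $\phi_L$ and $\varphi$ have opposite signs (the same indeterminacy defeats your claim of global monotonicity on a type (ii)/(iii) gap).

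The paper's proof sidesteps differentiation entirely by exploiting the factorization: $\phi_L$ and $\varphi$ are each continuous and strictly decreasing on the gap, each changes sign exactly once (at $c$ and $d$ respectively, using \eqref{EncadrementPhi2} to locate the sign of $\varphi$ at the endpoints), so the product $h$ is strictly decreasing from $1$ to $0$ on $[\omega_b,\min(c,d)]$ (both factors positive decreasing), strictly increasing from $0$ to $1$ on $[\max(c,d),\omega_t]$ (both factors negative decreasing), and merely \emph{negative} on $(\min(c,d),\max(c,d))$ --- which is all you need, since $\mu>0$ rules out solutions there. This is the missing idea; with it your count of two (resp.\ one) solutions goes through. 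A minor further point: in type (ii) you cannot exclude the borderline case $\phi_L(\omega_b)=0$ (i.e.\ $\cos(\omega_b L/2)=0$); the infinite-multiplicity eigenvalue sits at the endpoint $\omega_b^2$, which is not inside the open gap, so the gap can still be of type (ii). The paper handles it by computing $\lim_{\omega\to\omega_b^+}F(\omega)=1-\sqrt{1+2L}<0$, which again yields no additional solutions.
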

 
%

\begin{proof}



\noindent Assume that $\mu \geqslant 1$. If $\omega^2$ belongs to the discrete sprectrum of $\mathcal{A}_s^{\mu}$, then $\omega^2$ is in a gap of  ${\cal A}_s^\mu$ and Equation~\eqref{mu} is satisfied. But this is impossible because $|g(\omega)|>1$, which means in particular that $\mu=F(\omega)<1$.  \\

\noindent Then, we consider the case $0<\mu<1$. We investigate the variations of $F$ for the different types of gaps described in Prop.~\ref{gaps_ends}:
\begin{enumerate}
\item[-] Gap of type (i): as a preliminary step,  one can verify that  $|g(\omega_b)| = |g(\omega_t)|=1$ (using for instance the definition~\eqref{g} of $g$ together with the fact that $\phi_L(\omega_b) =f^+(\omega_b)$ and $\phi_L(\omega_t) =f^-(\omega_t)$, see Prop.~\ref{gaps_ends}), which implies that \begin{equation} \label{FomegabFomegat}
F(\omega_b)= F(\omega_t) =1.
\end{equation}
Then, let us investigate the variations of the function $\phi_L \varphi$, with $\varphi = \phi_L + \phi_2$: first, since $\phi_L(\omega_b) = f^+(\omega_b)>0$ and $\phi_L(\omega_t)=f^-(\omega_t) <0$ (see Prop.~\ref{gaps_ends}), the strictly decaying function $\phi_L$, which is continuous in the interval $[\omega_b, \omega_t]$, has exactly one zero in $(\omega_b, \omega_t)$. We denote it by $c$.  Besides, the fonction $\varphi$ is continuous and strictly decaying in the interval $[\omega_b, \omega_t]$ (Prop.~\ref{gaps_ends} ensuring the existence of $n\in \Z$ such that $[\omega_b, \omega_t] \subset (n \pi, (n+1)\pi)$, we deduce that $\phi_2$ is continuous in $[\omega_b, \omega_t]$). Moreover, it satisfies $\varphi(\omega_b)>0$ and $\varphi(\omega_t)<0$. Indeed, a direct computation shows that 
\begin{equation}\label{EncadrementPhi2}
\forall \, n\in \Z,  \quad \forall \, \omega \in (n \pi, (n+1)\pi),  \quad -f^+(\omega) < \phi_2(\omega) < -f^-(\omega).
\end{equation}
 As a consequence, $\varphi(\omega_b) = f^+(\omega_b) + \phi_2(\omega_b)>0$ and $\varphi(\omega_t) = f^-(\omega_t) + \phi_2(\omega_t)<0$.  As a result, $\varphi$ has exactly one zero in $(\omega_b, \omega_t)$. We denote it by $d$.

Noting that~\eqref{FomegabFomegat} implies that $\phi_L(\omega_b) \varphi(\omega_b) =  \phi_L(\omega_t) \varphi(\omega_t)=1$, we  deduce that the function $\phi_L \varphi$, which is continuous on $[\omega_b, \omega_t]$, is strictly decaying from $1$ to $0$ in the interval $[\omega_b, \min(c,d)]$, is strictly increasing from $0$ to $1$  in the interval $[\max(c,d), \omega_t]$, and is negative in the interval $(\min(c,d), \max(c,d))$.
It follows that $F$, which is therefore also continuous in $[\omega_b, \omega_t]$, is strictly decaying from $1$ to $0$ in the interval $[\omega_b, \min(c,d)]$, is negative in the interval $(\min(c,d), \max(c,d))$, and is strictly increasing from $0$ to $1$ in $[\max(c,d), \omega_t]$. As a result, for any $\mu\in(0,1)$, Equation~\eqref{mu} has exactly two solutions in $(\omega_b, \omega_t)$, the first one belonging to  $(\omega_b, \min(c,d))$ and the second one to $(\max(c,d), \omega_t)$.
\item[-] Gap of type (ii): in this case, $\omega_b \in \Z \pi$ and the function $F$ blows up in the neighborhood $\omega_b$ unless $\cos( \omega_b L/2)=0$.  More precisely, we can prove that 
$$
\lim_{\omega\rightarrow \omega_b^+} F(\omega)= 
  \begin{cases} 
1 - \sqrt{1 + 2 L}   < 0 &  \mbox{if} \; \cos( \omega_b L/2) = 0, \\
- \infty & \mbox{otherwise}.
\end{cases} 
$$ 
By contrast, since $\omega_t \notin \Z \pi$ and as for the first kind of gap,  we can prove that
\begin{equation}\label{GapType2top}
F(\omega_t) =1.
\end{equation}  
Then, here again, we investigate the variations of the function $\phi_L \varphi$, with $\varphi = \phi_L + \phi_2$. In view of Prop~\ref{gaps_ends}, the function $\phi_L$ is continuous, strictly decaying and negative in the intervall $(\omega_b, \omega_t]$.
Then,  the function $\varphi= \phi_L + \phi_2$ is continuous in $(\omega_b, \omega_t]$, strictly decaying, and (thanks to~\eqref{EncadrementPhi2}) satisfies
$$
\lim_{\omega\rightarrow \omega_b^+} \varphi(\omega) = + \infty \quad \mbox{and} \lim_{\omega\rightarrow \omega_t^-} \varphi(\omega)  = f^-(\omega_t) + \phi_2(\omega_t)< 0. 
$$

 As result, $\varphi$ has still exactly one zero in $(\omega_b, \omega_t)$. We denote it by $d$.


Noting that~\eqref{GapType2top} implies that $\phi_L(\omega_t) \varphi(\omega_t)=1$, we deduce that the function $\phi_L \varphi$, which is continuous in $(\omega_b, \omega_t]$, is negative in $(\omega_b, d)$, and stricly increasing from $0$ to $1$ in $[d, \omega_t]$. Thus, the function $F$, which is continuous in $(\omega_b, \omega_t)$, is negative in $(\omega_b, d)$, and strictly increasing from $0$ to $1$ on $[d, \omega_t]$. Consequently, for any $\mu \in (0,1)$,  Equation~\eqref{mu} has exactly one solution (that belongs to $(d, \omega_t)$). The proof for the gaps of type (iii) follows the same way. 
\end{enumerate}

\end{proof}
\subsection{The spectrum of the operator ${\cal A}_{a}^{\mu}$.} \label{SectionSpectreAntisymetrique}
We will now briefly describe the modifications of the previous considerations in the case of the operator ${\cal A}_{a}^{\mu}$.
The operator corresponding to the periodic case $\mu=1$ is denoted by ${\cal A}_{a}$. First, based on compact perturbation arguments,  we can prove the proposition, which is analogous to Proposition ~\ref{proposition_spectre}:
\begin{prop}
\label{proposition_spectre_a} The essential spectra of ${\cal A}_a^{\mu}$ and
${\cal A}_a$ coincide:
\begin{equation}
 \sigma_{ess}({\cal A}_a^{\mu})=\sigma_{ess}({\cal A}_a).
\end{equation}
\end{prop}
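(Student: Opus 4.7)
The plan is to follow exactly the strategy used for Proposition \ref{proposition_spectre}, which itself mimics the proof of Proposition \ref{essential_spectrum_same}: a Weyl/singular-sequence argument combined with the fact that ${\cal A}_a^{\mu}$ and ${\cal A}_a$ coincide away from the perturbed edge $e_0$. The first step is to prove a graph analogue of Lemma \ref{lemme_chi}. I would introduce a real-valued function $\chi : \mathcal{G} \to \mathbb{R}$, smooth on each edge, symmetric in $y$, locally constant in a neighborhood of every vertex $M \in \mathcal{M}$, vanishing on a neighborhood of the perturbed edge $e_0$, and equal to $1$ outside a compact subgraph $K \subset \mathcal{G}$. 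Multiplication by such a $\chi$ preserves antisymmetry (because $\chi$ is even in $y$) and preserves Kirchhoff's conditions (they reduce to the original ones wherever $\chi \equiv 1$ and become trivial wherever $\chi \equiv 0$), so $\chi u \in D({\cal A}_a^{\mu}) \cap D({\cal A}_a)$ whenever $u$ is in the appropriate domain.

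Next I would verify that if $\{u_j\}$ is a singular sequence for ${\cal A}_a$ at $\lambda$, then a subsequence of $\{\chi u_j\}$ is a singular sequence for ${\cal A}_a^{\mu}$ at $\lambda$. The four defining properties are checked as in Lemma \ref{lemme_chi}: weak convergence to $0$ in $L_2^{\mu}(\mathcal{G})$ is immediate; $H^1(\mathcal{G})$-boundedness of $\{u_j\}$ follows from the energy identity
\begin{equation*}
\|u_j'\|_{L_2(\mathcal{G})}^2 = ({\cal A}_a u_j - \lambda u_j, u_j)_{L_2(\mathcal{G})} + \lambda \|u_j\|_{L_2(\mathcal{G})}^2;
\end{equation*}
a Rellich-type compact embedding on the bounded subgraph $K$ extracts a subsequence with $\|u_j\|_{L_2(K)} \to 0$, which combined with $\chi \equiv 1$ outside $K$ and with the equivalence of the weighted and unweighted $L_2$ norms outside $e_0$ yields $\|\chi u_j\|_{L_2^{\mu}(\mathcal{G})} \geq c > 0$. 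Finally, the estimate
\begin{equation*}
\|({\cal A}_a^{\mu} - \lambda)(\chi u_j)\|_{L_2^{\mu}(\mathcal{G})}^2 \leq \|\chi({\cal A}_a u_j - \lambda u_j)\|_{L_2^{\mu}(\mathcal{G})}^2 + 2\|\chi' u_j'\|_{L_2(\mathcal{G})}^2 + \|\chi'' u_j\|_{L_2(\mathcal{G})}^2
\end{equation*}
has vanishing right-hand side, the first term by property 4 of the singular sequence and the commutator terms by an integration by parts on each edge identical to the one in Lemma \ref{lemme_chi}, using that $\chi'$ and $\chi''$ are supported in $K$ away from all vertices and from $e_0$.

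The reverse inclusion is perfectly symmetric: given a singular sequence for ${\cal A}_a^{\mu}$, the same cut-off $\chi$ annihilates any contribution supported near $e_0$, so $\chi u_j$ lies in $D({\cal A}_a)$ and the same estimates show that (up to subsequence) it is a singular sequence for ${\cal A}_a$ at $\lambda$. The Weyl criterion then gives $\sigma_{ess}({\cal A}_a^{\mu}) = \sigma_{ess}({\cal A}_a)$.

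The main technical subtlety, exactly as in the symmetric case, is the construction of the cut-off $\chi$: it must simultaneously be locally constant at every vertex (so that Kirchhoff's conditions are preserved), vanish on a neighborhood of the perturbed edge (so that the weighted and unweighted operators agree on $\chi u_j$), be symmetric in $y$ (so that antisymmetry is preserved), and equal $1$ outside a compact set. Once such a $\chi$ is exhibited, the antisymmetry constraint plays no further role in the estimates and the argument reduces verbatim to the one already written for the symmetric operator.
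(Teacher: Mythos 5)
Your argument is correct, but it is not the route the paper takes for this statement. For Proposition~\ref{proposition_spectre_a} (as for its symmetric counterpart, Proposition~\ref{proposition_spectre}) the paper gives no singular-sequence construction at all: it simply invokes abstract compact-perturbation arguments (the cited theorem of Birman--Solomjak), the point being that the perturbation --- reweighting the single compact edge $e_0$ and the two Kirchhoff conditions at $M_0^\pm$ --- is relatively compact, so Weyl's theorem applies directly. The explicit cut-off/singular-sequence machinery of Lemma~\ref{lemme_chi} is deployed in the paper only for the two-dimensional ladder operator $A_\varepsilon^\mu$, where the authors note that the standard results cannot be applied because the resolvent domains differ. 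What you have done is transplant that constructive argument to the quantum-graph setting, and it does go through: your cut-off $\chi$ (locally constant at every vertex, vanishing near $e_0$, even in $y$, equal to $1$ off a compact subgraph) maps $D(\mathcal{A}_a)$ into $D(\mathcal{A}_a^\mu)$ and conversely, since at each vertex the Kirchhoff sum is either multiplied by a constant or trivially zero; the energy identity and the one-dimensional Rellich theorem on the compact subgraph give the strong local convergence; and the commutator terms are killed by the same integration by parts, the boundary terms vanishing because $\chi'$ is supported in the interiors of the edges. A point in favour of your version is that, since $\chi u_j$ vanishes on $e_0$, its weighted and unweighted norms coincide exactly, which cleanly resolves the fact that $\mathcal{A}_a^\mu$ and $\mathcal{A}_a$ act on different (though norm-equivalent) Hilbert spaces with different domains --- precisely the kind of issue that a one-line citation of the abstract theorem glosses over. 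The price is length; the benefit is a self-contained proof that treats the graph and the ladder uniformly.
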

\noindent Besides, using the Floquet-Bloch Theory, we obtain the analogue of Proposition \ref{eigenvalues_sym} in the antisymmetric case (we refer the reader to Section~\ref{SectionGrapheSym} for the definition of ${\cal A}_{a}(\theta)$):
\begin{prop}
\label{eigenvalues_asym}
For $\theta\in[0,\pi]$, $\omega^2\in\sigma({\cal A}_{a}(\theta))$ if and only if $\omega\neq0$  and $\omega$ is a solution of the equation
\begin{equation}
\label{caract_spectre_ess_asym}
2\sin ({\omega L}/{2}) \left(\cos{\omega}-\cos{\theta}\right)=-\sin{\omega}\cos({\omega L}/{2}).\end{equation}
\end{prop}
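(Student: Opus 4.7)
The plan is to mimic the proof of Proposition~\ref{eigenvalues_sym}, changing only the boundary condition on the vertical edge $e_0$ at the axis of symmetry. The operator $\mathcal{A}_a(\theta)$ is defined exactly as $\mathcal{A}_s(\theta)$, except that the symmetric condition $u_0'(0)=0$ in the definition of $D(\mathcal{A}_s(\theta))$ is replaced by the Dirichlet-type condition $u_0(0)=0$, which expresses that antisymmetric functions must vanish on the axis $y=0$.

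For $\omega \neq 0$, I would parametrise the eigenfunction on each edge as
\begin{align*}
u_\pm(x) &= a_\pm e^{i\omega x} + b_\pm e^{-i\omega x}, \\
u_0(y) &= a_0 e^{i\omega y} + b_0 e^{-i\omega y}.
\end{align*}
The condition $u_0(0)=0$ imposes $b_0 = -a_0$, so that $u_0(y) = 2i a_0 \sin(\omega y)$; in particular $u_0(-L/2) = -2ia_0 \sin(\omega L/2)$ and $u_0'(-L/2) = 2i\omega a_0 \cos(\omega L/2)$. Plugging these expressions into the continuity condition at $(0,-L/2)$, the Kirchhoff relation~(\ref{kirchoff})-(a), and the $\theta$-quasi-periodicity conditions~(\ref{kirchoff})-(b) (which again yield $a_- = a_+ e^{i(\theta+\omega)}$ and $b_- = b_+ e^{i(\theta-\omega)}$), I obtain a $3\times 3$ homogeneous linear system in the unknowns $(a_+,b_+,a_0)$, whose matrix $\widetilde M(\theta,\omega,L)$ differs from $M(\theta,\omega,L)$ only in the third column: the entries $-2\cos(\omega L/2)$ and $-2i\sin(\omega L/2)$ are replaced by $2i\sin(\omega L/2)$ and $2\cos(\omega L/2)$, respectively.

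The main computational step is then the determinant calculation, which I expect to be the only slightly error-prone part of the argument. Expanding $\det \widetilde M$ along the third column and using the identity $(1 - e^{i(\theta+\omega)})(1-e^{i(\theta-\omega)}) = -2 e^{i\theta}(\cos\omega - \cos\theta)$ already exploited in the symmetric case, one obtains
$$
\det \widetilde M(\theta,\omega,L) = -4i\, e^{i\theta}\Big[\,2\sin(\tfrac{\omega L}{2})(\cos\omega - \cos\theta) + \sin\omega \,\cos(\tfrac{\omega L}{2})\Big],
$$
whose vanishing is precisely equation~(\ref{caract_spectre_ess_asym}). Since the prefactor $-4ie^{i\theta}$ is never zero, this characterises exactly the eigenvalues $\omega^2 \neq 0$ of $\mathcal{A}_a(\theta)$.

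It remains to justify the exclusion $\omega \neq 0$. For $\omega = 0$ the restrictions $u_\pm$, $u_0$ are affine, the condition $u_0(0)=0$ forces $u_0(y)=b_0 y$, and a short case analysis on $\theta$ (distinguishing $\theta = 0$ from $\theta \neq 0$) of the continuity, Kirchhoff and quasi-periodicity relations forces all remaining coefficients to vanish (in the case $\theta \neq 0$ one ends up with the relation $a_+ \bigl[1 + L(1-\cos\theta)\bigr]=0$, whose bracket is strictly positive). Hence $0 \notin \sigma(\mathcal{A}_a(\theta))$, which accounts for the extra condition $\omega \neq 0$ in the statement.
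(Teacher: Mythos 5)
Your proof is correct and is exactly the argument the paper intends: the paper in fact omits the proof of Proposition~\ref{eigenvalues_asym}, merely asserting it as the analogue of Proposition~\ref{eigenvalues_sym}, and your adaptation (replacing $u_0'(0)=0$ by $u_0(0)=0$, which alters only the third column of the matrix) supplies precisely the omitted details. I checked the determinant $\det \widetilde M = -4i\,e^{i\theta}\bigl[2\sin(\tfrac{\omega L}{2})(\cos\omega-\cos\theta)+\sin\omega\cos(\tfrac{\omega L}{2})\bigr]$ and the exclusion of $\omega=0$ (where one indeed reaches $a_+\bigl[1+L(1-\cos\theta)\bigr]=0$ with a strictly positive bracket); both are right.
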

\noindent Thanks to the previous characterization, and similarly to the results of Proposition \ref{result_sym},  we can describe the structure of the spectrum of ${\cal A}_a$:
\begin{prop}\label{result_asym} The following properties hold:
\begin{enumerate}
\item $\sigma_{2,a} \cup \sigma_{L,a} \subset \sigma({\cal A}_s)$, where  
$\sigma_{2,a} = \left\{(\pi n)^2,\; n\in\mathbb{N}^\ast\right\}$ and $\sigma_{L,a} = \left\{  \big( (2n+1) \pi /L\big)^2,\; n\in\mathbb{N}\right\}$.

\item The operator ${\cal A}_a$ has infinitely many gaps whose ends tend to infinity.
\end{enumerate}

\end{prop}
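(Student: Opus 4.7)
The plan is to imitate, point by point, the proof of Proposition~\ref{result_sym}, using the antisymmetric dispersion relation~\eqref{caract_spectre_ess_asym} in place of~\eqref{caract_spectre_ess_sym}. Throughout, we exploit the identity~\eqref{spectrum} (applied to $\mathcal{A}_a$) which reduces the problem to showing that certain values of $\omega$ are (resp. are not) solutions of~\eqref{caract_spectre_ess_asym} for some $\theta\in[0,\pi]$.

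For point $1$, I would treat the two families separately. If $\omega=\pi n$ with $n\in\mathbb{N}^*$, then $\sin\omega=0$, so the right-hand side of~\eqref{caract_spectre_ess_asym} vanishes; the left-hand side then reads $2\sin(n\pi L/2)\bigl((-1)^n-\cos\theta\bigr)$, which vanishes for $\theta=0$ (if $n$ is even) or $\theta=\pi$ (if $n$ is odd). Thus $\omega^2\in\sigma(\mathcal{A}_a(\theta))$. If $\omega=(2n+1)\pi/L$ with $n\in\mathbb{N}$, then $\cos(\omega L/2)=0$, hence the right-hand side of~\eqref{caract_spectre_ess_asym} again vanishes, while the left-hand side becomes $\pm 2(\cos\omega-\cos\theta)$; this vanishes for the unique $\theta\in[0,\pi]$ with $\cos\theta=\cos\omega$. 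This proves $\sigma_{2,a}\cup\sigma_{L,a}\subset\sigma(\mathcal{A}_a)$.

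For point $2$, the key observation—symmetric to the one used in the proof of Proposition~\ref{result_sym}—is to look at values $\omega_n=2\pi n/L$ for which $\sin(\omega_n L/2)=0$, so that the left-hand side of~\eqref{caract_spectre_ess_asym} vanishes identically in $\theta$. Two subcases then arise. (a) If $\sin\omega_n\neq 0$, the right-hand side equals $\mp\sin\omega_n\neq 0$, so~\eqref{caract_spectre_ess_asym} admits no solution in $\theta$, hence $\omega_n^2\notin\sigma(\mathcal{A}_a)$; since by point $1$ the neighbouring values $\bigl((2n\pm 1)\pi/L\bigr)^2$ belong to $\sigma(\mathcal{A}_a)$, there is a gap strictly inside $\bigl(((2n-1)\pi/L)^2,((2n+1)\pi/L)^2\bigr)$ containing $\omega_n^2$. (b) If $\sin\omega_n=0$ (which, as in Remark~\ref{rem:Qc}, happens only for special values of $L$), I would perform a local expansion $\omega=\omega_n+z$ in~\eqref{caract_spectre_ess_asym}, analogous to~\eqref{help_proof}, to show that the derivative at $z=0$ is nonzero uniformly in $\theta$, yielding two distinct gaps on either side of $\omega_n^2$ contained respectively in $\bigl(((2n-1)\pi/L)^2,\omega_n^2\bigr)$ and $\bigl(\omega_n^2,((2n+1)\pi/L)^2\bigr)$.

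Concluding, since the intervals $\bigl(((2n-1)\pi/L)^2,((2n+1)\pi/L)^2\bigr)$ are pairwise disjoint, go to infinity with $n$, and each contains (at least) one gap, $\mathcal{A}_a$ has infinitely many gaps whose ends tend to infinity. The only non-routine step is the local expansion in subcase (b): one must verify that the leading coefficient $\cos\omega_n+L(\cos\omega_n-\cos\theta)=\pm 1+L(\pm 1-\cos\theta)$ does not vanish for any $\theta\in[0,\pi]$—this is the direct analogue of the obstacle encountered in the proof of Proposition~\ref{result_sym}, and is handled in exactly the same way, since $|\cos\omega_n|=1$ forces the expression to be either strictly positive or strictly negative over $\theta\in[0,\pi]$.
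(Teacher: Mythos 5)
Your proof is correct and is precisely the adaptation of the proof of Proposition~\ref{result_sym} that the paper has in mind (the paper omits the argument for the antisymmetric case, merely invoking the analogy with the symmetric one). In particular, your verification that the local expansion of \eqref{caract_spectre_ess_asym} around $\omega_n = 2\pi n/L$ with $\sin\omega_n=0$ produces the same leading coefficient $\cos\omega_n + L(\cos\omega_n-\cos\theta)$, uniformly bounded away from zero since $\cos\omega_n=\pm 1$, is exactly the key point of case (b).
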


\noindent Then, excluding here again the particular cases $\omega \in \{\pi\mathbb{Z}\}\cup\{2\pi\mathbb{Z}/L\}$, the computation of the discrete spectrum ${\cal A}_{a}$   leads to the set (\ref{finitediff})-(\ref{finitediffzero}) of finite-difference equations  substituting  $g_{a}(\omega)$ and $g_{a}^{\mu}(\lambda)$ for respectively $g(\omega)$ and $g^{\mu}(\omega)$: 
\begin{align*}
&g_{a}(\lambda)=-\cos{\lambda}+\frac{1}{2}\sin{\lambda}\tan{(\lambda L/2+\pi/2)},\\
&g_{a}^{\mu}(\lambda)=-\cos{\lambda}+\frac{\mu}{2}\sin{\lambda}\tan{(\lambda L/2+\pi/2)}.
\end{align*}
The investigation of the characteristic equation (\ref{equation_quadratique}) then provides the following characterization for the discrete spectrum of ${\cal A}_{a}$: 
 \begin{equation}\label{CharacValeurPropreA}
\omega^2\in\sigma_d({\cal A}_{a}^{\mu})\quad \Leftrightarrow\quad \mu = 1-\sqrt{ \frac{g_{a}^2(\omega)-1}{(g_{a}(\omega)+\cos{\omega})^2}}.
\end{equation}
Finally, as in the symmetric case (see Theorem ~\ref{existence_sym}), a detailed analysis of~\eqref{CharacValeurPropreA} allows us to prove the following result of the existence of eigenvalues: 
\begin{thm}
For $\mu\geqslant 1$ the discrete sprectrum of ${\cal A}_{a}^{\mu}$ is empty. For $0<\mu<1$,  there exists either one or two eigenvalues in each gap of ${\cal A}_{a}^{\mu}$. \end{thm}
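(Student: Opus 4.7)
My plan is to mirror the proof of Theorem~\ref{existence_sym} in the antisymmetric setting. The only substantive change is the replacement of the auxiliary function $\phi_L(\omega)=2\cot(\omega L/2)$ by $\tilde{\phi}_L(\omega) := -2\tan(\omega L/2)$, which one reads directly from~\eqref{caract_spectre_ess_asym} by dividing through by $\sin(\omega L/2)$: for $\omega \notin \pi\Z \cup (2\Z+1)\pi/L$, one has $\omega^2 \in \sigma({\cal A}_a)$ if and only if $(\omega,\tilde{\phi}_L(\omega)) \in D$, with $D$ as in~\eqref{domainD}. The first step is then to derive, exactly as in Proposition~\ref{gaps_ends}, a classification of the gaps of ${\cal A}_a$ into three types (i), (ii), (iii); only the role of $\pi\Z$ and $(2\Z+1)\pi/L$ as forbidden endpoints is swapped.

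For $\mu\geq 1$, the argument is immediate: on any gap of ${\cal A}_a^\mu$ one has $|g_a(\omega)|>1$ by the antisymmetric analogue of Lemma~\ref{LemmeSchnoll}, so the square-root in~\eqref{CharacValeurPropreA} is a positive real number, giving
\[
F_a(\omega) := 1-\sqrt{\frac{g_a^2(\omega)-1}{(g_a(\omega)+\cos\omega)^2}} < 1 \leq \mu,
\]
and~\eqref{CharacValeurPropreA} has no solution.

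For $0<\mu<1$, I would rewrite $F_a(\omega) = 1 - \sqrt{1-\tilde{\phi}_L(\omega)(\tilde{\phi}_L(\omega)+\phi_2(\omega))}$ and study its variations on each type of gap, in direct analogy with the proof of Theorem~\ref{existence_sym}. In a gap $(\omega_b^2,\omega_t^2)$ of type~(i), the endpoint conditions force $|g_a(\omega_b)|=|g_a(\omega_t)|=1$, hence $F_a(\omega_b)=F_a(\omega_t)=1$; the sign and monotonicity of $\tilde{\phi}_L$ and of $\tilde{\phi}_L+\phi_2$, combined with the elementary bound $-f^+<\phi_2<-f^-$ on $(n\pi,(n+1)\pi)$, yield that $F_a$ decreases from $1$ to $0$, is negative on a subinterval, then increases from $0$ back to $1$, so $F_a=\mu$ has exactly two solutions. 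For gaps of type~(ii) or~(iii), only one endpoint produces $F_a=1$, while at the other endpoint $F_a$ either tends to $-\infty$ or, in the exceptional sub-case analogous to $\cos(\omega_b L/2)=0$, to a finite negative limit of the form $1-\sqrt{1+cL}<0$; in either scenario, monotonicity yields exactly one solution.

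The main obstacle is purely bookkeeping: carefully re-checking the sign of $\tilde{\phi}_L$ at each kind of gap endpoint (now dictated by the zero set of $\cos(\omega L/2)$ instead of $\sin(\omega L/2)$) and verifying that the limiting values and monotonicity statements used in the symmetric proof transfer verbatim. Since $\phi_2$ is unchanged and the geometric characterization through $D$ still applies, no new conceptual idea is required; the result then follows from the intermediate value theorem applied to $F_a$ on each gap.
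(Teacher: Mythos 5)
Your proposal is correct and follows exactly the route the paper intends: the paper itself only says the result follows ``as in the symmetric case'' from a detailed analysis of the characterization of the discrete spectrum of ${\cal A}_a^\mu$, and your substitution $\phi_L \mapsto \tilde{\phi}_L = -2\tan(\omega L/2)$ (with poles now at the zeros of $\cos(\omega L/2)$, i.e. at $\sigma_{L,a}$, and the identity $F_a = 1-\sqrt{1-\tilde{\phi}_L(\tilde{\phi}_L+\phi_2)}$ holding by the same algebra since $g_a = -\cos\omega + \sin\omega/\tilde{\phi}_L$) is precisely the right adaptation, including the limit $1-\sqrt{1+2L}$ in the exceptional endpoint case. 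No discrepancy with the paper's argument.
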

\subsection{The spectrum of the operator ${\cal A}$.} As we have seen, both of the operators ${\cal A}_s$ and ${\cal A}_{a}$ have infinitely many gaps. However, it turns out that the gaps of one operator overlap with the spectral bands of the other one, so that the full operator ${\cal A}$ have no gap.
\begin{prop}
\begin{equation*}
\sigma({\cal A})=\mathbb{R}^+.
\end{equation*}
\end{prop}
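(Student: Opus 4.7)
The plan is to prove the apparently stronger claim that every $\omega\geq 0$ satisfies $\omega^2\in\sigma(\mathcal{A}_s)\cup\sigma(\mathcal{A}_a)$, which combined with the decomposition $\sigma(\mathcal{A})=\sigma(\mathcal{A}_s)\cup\sigma(\mathcal{A}_a)$ already established will give $\sigma(\mathcal{A})=\R^+$. I will work directly from the dispersion relations \eqref{caract_spectre_ess_sym} and \eqref{caract_spectre_ess_asym}, the question being whether, for each $\omega$, at least one of them admits a solution $\theta\in[0,\pi]$.

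A few degenerate $\omega$'s are handled at once. If $\omega\in\pi\Z$, then $\omega^2\in\sigma_{2,s}\subset\sigma(\mathcal{A}_s)$ by Proposition \ref{result_sym}. If $\sin(\omega L/2)=0$, then \eqref{caract_spectre_ess_sym} collapses to $\cos\theta=\cos\omega$, which is always solvable in $[0,\pi]$, so $\omega^2\in\sigma(\mathcal{A}_s)$. Symmetrically, when $\cos(\omega L/2)=0$ (which forces $\omega\neq 0$), \eqref{caract_spectre_ess_asym} collapses to the same condition, yielding $\omega^2\in\sigma(\mathcal{A}_a)$.

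It remains to treat the generic case $\omega\notin\pi\Z$ with $\sin(\omega L/2)\cos(\omega L/2)\neq 0$. There, solving the two dispersion relations for $\cos\theta$ and introducing, in the spirit of Lemma \ref{LemmeSchnoll},
\begin{equation*}
h_s(\omega)=\cos\omega-\tfrac{1}{2}\sin\omega\,\tan(\omega L/2), \qquad h_a(\omega)=\cos\omega+\tfrac{1}{2}\sin\omega\,\cot(\omega L/2),
\end{equation*}
one has $\omega^2\in\sigma(\mathcal{A}_s)\Leftrightarrow|h_s(\omega)|\leq 1$ and $\omega^2\in\sigma(\mathcal{A}_a)\Leftrightarrow|h_a(\omega)|\leq 1$. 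The crux is the elementary identity obtained by multiplying the two defining expressions,
\begin{equation*}
\bigl(h_s(\omega)-\cos\omega\bigr)\bigl(h_a(\omega)-\cos\omega\bigr)=-\tfrac{1}{4}\sin^2\omega,
\end{equation*}
which is strictly negative, so $h_s-\cos\omega$ and $h_a-\cos\omega$ have opposite signs. Suppose for contradiction that $|h_s(\omega)|>1$ and $|h_a(\omega)|>1$; since $\cos\omega\in[-1,1]$, the opposite-sign ordering forces (without loss of generality) $h_s<-1$ and $h_a>1$. Then $|h_s-\cos\omega|>1+\cos\omega$ and $|h_a-\cos\omega|>1-\cos\omega$, so the product of the two distances strictly exceeds $1-\cos^2\omega=\sin^2\omega$, contradicting its actual value $\sin^2\omega/4$. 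Hence $|h_s(\omega)|\leq 1$ or $|h_a(\omega)|\leq 1$, which closes the argument.

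The only potentially delicate point is bookkeeping the degenerate $\omega$'s, notably when $L\in\mathbb{Q}_c$ and several of the degeneracies $\omega\in\pi\Z$, $\sin(\omega L/2)=0$, $\cos(\omega L/2)=0$ coincide; apart from this, the one-line identity above is the whole content of the proof.
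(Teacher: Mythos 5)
Your proof is correct and follows essentially the same strategy as the paper: after disposing of the degenerate values of $\omega$, both arguments reduce the claim to showing that the two conditions $|h_s(\omega)|>1$ and $|h_a(\omega)|>1$ (i.e.\ that $\omega^2$ lies simultaneously in a gap of ${\cal A}_s$ and of ${\cal A}_a$) are incompatible. The only difference is the concluding algebra: the paper multiplies the second inequality by $a^2=\tan^2(\omega L/2)$ and adds it to the first so that the cross terms cancel, yielding the impossible $\frac14\sin^2\omega+\cos^2\omega>1$, whereas you use the identity $(h_s-\cos\omega)(h_a-\cos\omega)=-\frac14\sin^2\omega$ plus a sign argument; both are one-line manipulations of the same pair of inequalities, and your handling of the degenerate cases ($\omega\in\pi\Z$, which your sign argument genuinely needs to exclude, and $\sin(\omega L/2)\cos(\omega L/2)=0$) is complete.
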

\begin{proof}
Let us suppose that there exists $\omega$ such that $\omega^2\notin\sigma({\cal A})$ (of course, the same is true for some open neighborhood of $\omega$). We first note that $\omega \notin \sigma_{L} \cup \sigma_{L,a}$, since these sets are either in the spectrum of  ${\cal A}_{s}^{\mu}$ or in the spectrum ${\cal A}_{a}^{\mu}$ (Propositions ~\ref{result_sym} and \ref{result_asym}). As a consequence,  $ \cos(\omega L/2)\neq 0$ and $\sin(\omega L/2)\neq0$. Then, since $\omega^2\notin\sigma({\cal A})$,  the characterizations  \eqref{caract_spectre_ess_sym}-\eqref{caract_spectre_ess_asym}  of the essential spectrum of ${\cal A}_{s}$ and ${\cal A}_{a}$ (divided respectively by $\cos(\omega L/2)$ and $ \sin(\omega L/2)$) imply that
\begin{equation}
\label{systemR}
\left|-\cos{\omega}+\displaystyle\frac{1}{2}\sin{\omega}\tan{(\omega L/2)}\right|>1\quad \mbox{and} \quad 
\left|\cos{\omega}+\displaystyle\frac{\sin{\omega}}{2\tan{(\omega L/2)}}\right|>1.
\end{equation}
Introducing $a=\tan(\omega L/2)$, the system (\ref{systemR}) can be rewritten as
\begin{numcases}
\strut \frac{a^2}{4}\sin^2{\omega}-a\sin{\omega}\cos{\omega}+\cos^2{\omega}>1,\label{systR1}\\
\frac{1}{4a^2}\sin^2{\omega}+\frac{1}{a}\sin{\omega}\cos{\omega}+\cos^2{\omega}>1.\label{systR2}
\end{numcases}
Multiplying (\ref{systR2}) by $a^2$ and taking the sum with (\ref{systR1}) we obtain
\begin{equation*}
\frac{1}{4}(1+a^2)\sin^2{\omega}+(1+a^2)\cos^2{\omega}>1+a^2,
\end{equation*}
which is impossible.
\end{proof}
\noindent Let us then remark that the set of eigenvalues of ${\cal A}^\mu$, which is the union of the sets of eigenvalues of ${\cal A}^\mu_s$ and ${\cal A}^\mu_a$, is embedded in the essential spectrum of ${\cal A}^\mu$.
\section{Existence of eigenvalues for the operator on the ladder}\label{sec:devasymp_ladder}
\subsection{Main result }\label{sub:methodology}
We return now to the case of the ladder. As it was mentioned before,
instead of studying the full operator $A^{\mu}_{\varepsilon}$ we will
study separately the operators $A^{\mu}_{\varepsilon,s}$,
$A^{\mu}_{\varepsilon,a}$.
Let us remind first the
 result, already proven for instance in 
\cite{KuchmentZeng:2001},
which states the convergence of the essential spectrum of the periodic
operators $A_{\varepsilon,s}$ (resp. $A_{\varepsilon,a}$)  to the essential
spectrum of  $\mathcal{A}_{s}$ (resp. $\mathcal{A}_{a}$).  The proof in~\cite{KuchmentZeng:2001} is based on the convergence of the eigenvalues of the reduced operator $A_\varepsilon(\theta)$.  We point out that the construction of the asymptotic expansion of these eigenvalues in the vicinity of the intersection point of the dispersion curves of $A_\varepsilon(\theta)$ is delicate and we refer the reader to~\cite{MR2766589} for an example of detailed asymptotic in that case}.
\begin{thm}[Essential spectrum]
\label{thmKuchment}
Let $\left\{(a_m, b_m),\:m\in\mathbb{N}^*\right\}$ be the gaps of the operator $\mathcal{A}_s$ (respectively $\mathcal{A}_{a}$) on the limit graph $\mathcal{G}$. Then, for each $m_0\in\mathbb{N}^*$ there exists $\varepsilon_0 >0$ such that if $\varepsilon<\varepsilon_0$ the operator $A_{\varepsilon,s}$ (respectively $A_{\varepsilon,a}$) has at least $m_0$ gaps $\left\{(a_{\varepsilon,m}, b_{\varepsilon,m}),\:1\leqslant m\leqslant m_0\right\}$ such that
\begin{equation*}
a_{\varepsilon,m}=a_m+O(\varepsilon),\quad b_{\varepsilon,m}=b_m+O(\varepsilon),\qquad\varepsilon\rightarrow 0,\qquad 1\leqslant m\leqslant m_0. 
\end{equation*}
\end{thm}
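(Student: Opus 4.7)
The plan is to reduce the statement to a question on the reduced operators via Floquet--Bloch. Recalling that
$$
\sigma(A_{\varepsilon,s}) = \bigcup_{\theta\in[0,\pi]} \sigma\bigl(A_{\varepsilon,s}(\theta)\bigr), \qquad \sigma(\mathcal{A}_s) = \bigcup_{\theta\in[0,\pi]}\sigma\bigl(\mathcal{A}_s(\theta)\bigr),
$$
and that the two reduced families have compact resolvents, hence discrete spectra $\{\lambda^{(n)}_{\varepsilon,s}(\theta)\}_{n\geq 1}$ and $\{\lambda^{(n)}_s(\theta)\}_{n\geq 1}$, I claim that the core estimate to establish is that, for every fixed $n\in\mathbb{N}^*$,
$$
\sup_{\theta\in[0,\pi]} \bigl|\lambda^{(n)}_{\varepsilon,s}(\theta)-\lambda^{(n)}_s(\theta)\bigr| \leq C_n \, \varepsilon, \qquad \varepsilon\to 0.
$$
Once this is available, I fix $m_0$, choose $N=N(m_0)$ large enough that the first $m_0$ gaps of $\mathcal{A}_s$ are determined by the bands of indices $n\leq N$ (possible because the eigenvalues $\lambda^{(n)}_s(\theta)$ tend to $+\infty$), apply the uniform estimate to these finitely many curves, and pick $\varepsilon_0$ so small that the resulting $O(\varepsilon)$-perturbed bands stay pairwise disjoint on the relevant interval. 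This immediately gives $m_0$ gaps for $A_{\varepsilon,s}$ whose endpoints lie within $O(\varepsilon)$ of the endpoints of the gaps of $\mathcal{A}_s$.

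The uniform convergence of dispersion curves is proved by a two-sided min-max argument. For the \emph{upper bound}, I build an extension operator $J_\varepsilon : H^1(\mathcal{C}_-)\to H^1(\mathcal{C}_\varepsilon\cap\{y<0\})$ that, to an edge function $v_e$ on the graph, associates a function on the corresponding rectangle of the ladder cell that is independent of the transverse variable, smoothly matched at each vertex through a cutoff supported in an $O(\varepsilon)$-neighborhood of the junction. The construction respects the symmetry in $y$ and the $\theta$-quasiperiodic boundary conditions on $\Gamma_\varepsilon^\pm$ (since the cutoffs are supported away from the lateral sides), so $J_\varepsilon$ maps $D[a_s^\mu(\theta)]$ into the form domain of $A_{\varepsilon,s}(\theta)$. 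A direct energy computation, combined with standard estimates on the transition regions, yields
$$
\frac{\|\nabla J_\varepsilon \varphi\|^2_{L^2(\mathcal{C}_\varepsilon\cap\{y<0\})}}{\|J_\varepsilon \varphi\|^2_{L^2(\mathcal{C}_\varepsilon\cap\{y<0\})}} = \frac{\|\varphi'\|^2_{L_2^\mu(\mathcal{C}_-)}}{\|\varphi\|^2_{L_2^\mu(\mathcal{C}_-)}} + O(\varepsilon),
$$
and the min-max principle then delivers $\lambda^{(n)}_{\varepsilon,s}(\theta) \leq \lambda^{(n)}_s(\theta) + C_n \varepsilon$ with $C_n$ independent of $\theta$.

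For the \emph{lower bound}, I use an averaging operator $P_\varepsilon$ going the other way, built by integrating in the transverse direction on each thin rectangle of the ladder cell. Poincar\'e and trace inequalities on rescaled "node regions" obtained by zooming by $1/\varepsilon$ around each vertex $M_j^\pm$ (bounded, $\varepsilon$-independent geometries) give
$$
\|u-P_\varepsilon u\|_{L^2(\mathcal{C}_\varepsilon\cap\{y<0\})} \leq C \, \varepsilon \, \|\nabla u\|_{L^2(\mathcal{C}_\varepsilon\cap\{y<0\})},
$$
together with a vertex-compatibility estimate showing that the three transverse averages incident to a vertex agree up to $O(\varepsilon)\,\|\nabla u\|$. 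After a small correction making $P_\varepsilon u$ truly continuous at each node, $P_\varepsilon$ sends the form domain of $A_{\varepsilon,s}(\theta)$ into $H^1(\mathcal{C}_-)$ with the correct Kirchhoff weights and $\theta$-quasiperiodicity, and plugging the first $n$ eigenfunctions of $A_{\varepsilon,s}(\theta)$ into the min-max formula for $\mathcal{A}_s(\theta)$ yields the reverse inequality.

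The main obstacle is the lower bound, and specifically the vertex-matching step: proving that the transverse averages from the three edges meeting at $M_j^\pm$ coincide modulo $O(\varepsilon)\|\nabla u\|$, uniformly in $j$ and in $\theta$. This is the content of the analysis in~\cite{KuchmentZeng:2001,Post:2006} and relies on working on the rescaled node domain, which is $\theta$-independent because the $\theta$-quasiperiodic conditions only affect the lateral sides $\Gamma_\varepsilon^\pm$, located far from the vertices. Hence uniformity in $\theta$ follows for free from the rescaled estimate. The antisymmetric case is treated identically, working on the half-cell with Dirichlet condition on $\{y=0\}$ in place of the Neumann one, which does not affect the above constructions.
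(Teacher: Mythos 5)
The paper does not actually prove Theorem~\ref{thmKuchment}: it is stated as a reminder of a known result, with the proof delegated to \cite{KuchmentZeng:2001} (and \cite{Post:2006}), together with the one-line indication that the argument there ``is based on the convergence of the eigenvalues of the reduced operator $A_\varepsilon(\theta)$.'' Your proposal reconstructs exactly that strategy --- Floquet--Bloch reduction, uniform-in-$\theta$ convergence of the dispersion curves by a two-sided min-max comparison using an extension operator (graph $\to$ ladder) and a transverse-averaging operator (ladder $\to$ graph), then passage from band convergence to gap convergence for finitely many bands. The overall architecture is correct and is the standard one; your observation that the $\theta$-dependence lives only on the lateral boundaries, far from the vertices, so that the node estimates are $\theta$-uniform, is also the right way to get uniformity.

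There is, however, one concretely wrong intermediate estimate, and it sits precisely at the step you yourself identify as the main obstacle. By two-dimensional scale invariance of the Dirichlet integral, rescaling the node region $K_\varepsilon$ (of diameter $O(\varepsilon)$) to unit size gives
$$
\bigl|\langle u\rangle_{\Gamma_1}-\langle u\rangle_{\Gamma_2}\bigr|\;\leq\; C\,\|\nabla u\|_{L^2(K_\varepsilon)},
$$
i.e.\ the three transverse averages incident to a vertex agree up to $O(1)$ times the \emph{local} gradient norm on the node, not up to $O(\varepsilon)\,\|\nabla u\|$ as you claim. Since $\|\nabla u\|_{L^2(K_\varepsilon)}$ is not small for a general element of the form domain (only its square summed over all nodes is controlled by the total energy), the ``small correction making $P_\varepsilon u$ truly continuous at each node'' is not small in the naive sense: a ramp of height $|\Delta|\sim\|\nabla u\|_{L^2(K_\varepsilon)}$ over a length $\ell$ adds a one-dimensional energy $\varepsilon|\Delta|^2/\ell$, and balancing this against the target error $O(\varepsilon)\|\nabla u\|^2_{L^2}$ is exactly the nontrivial content of \cite{KuchmentZeng:2001,Post:2006} (and is why cruder versions of this argument only yield $O(\sqrt{\varepsilon})$). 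As written, your lower bound therefore has a gap at the vertex-matching step, which you fill only by citing the same references the paper cites for the whole theorem; everything else (the upper bound via fattened eigenfunctions, which are smooth on each edge so the junction cutoffs cost only $O(\varepsilon)$ in relative energy, and the deduction of gap convergence from uniform band convergence) is sound.
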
 
\noindent  In \cite{Post:2006}, O. Post proves the norm convergence of the resolvent
  of the laplacian with Neumann boundary conditions for a
   large class of thin domains shrinking to graphs. It
  consequently demonstrates the existence of eigenvalues of
  $A_{\varepsilon,s}^\mu$  (respectively $A_{\varepsilon,a}^\mu$)  located in the gap of the essential
  spectrum. This paper provides a simple and constructive alternative proof of this result. In the following proof, we consider the eigenvalues of the operator $A^{\mu}_{\varepsilon,s}$, the case of the operator $A^{\mu}_{\varepsilon,a}$ being treated analogously. 
   \begin{thm}[Discrete spectrum]
\label{existence_eig}
Let $(a,b)$ be a gap of the operator $\mathcal{A}^{\mu}_s$ (respectively $\mathcal{A}^{\mu}_{a}$) on the limit graph $\mathcal{G}$ and $\lambda\in\;(a,b)$ an eigenvalue of this operator. Then there exists $\varepsilon_0 >0$ such that if $\varepsilon<\varepsilon_0$ the operator $A^{\mu}_{\varepsilon,s}$ (resp. $A^{\mu}_{\varepsilon,a}$) has an eigenvalue $\lambda_{\varepsilon}$ inside a gap $(a_{\varepsilon}, b_{\varepsilon})$. Moreover, for $\varepsilon<\varepsilon_0$, there exists $C>0$ such that
\begin{equation}\label{ConvergenceValeurPropre}
| \lambda_{\varepsilon} - \lambda | \leq C \sqrt{\varepsilon}. 
\end{equation}
\end{thm}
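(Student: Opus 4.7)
The plan is to follow the strategy outlined in Section~\ref{sub:methodology}: build an explicit quasi-mode $u_\varepsilon \in D(A^\mu_{\varepsilon,s})$ from the eigenfunction $u$ of $\mathcal{A}^\mu_s$ associated to $\lambda$, and then apply the standard self-adjoint approximate eigenvalue criterion (Lemma~4 of~\cite{MR2766589}): if $u_\varepsilon \in D(A^\mu_{\varepsilon,s})$ satisfies $\|(A^\mu_{\varepsilon,s}-\lambda)u_\varepsilon\|_{L_2(\Omega^\mu_\varepsilon)} \le \eta \, \|u_\varepsilon\|_{L_2(\Omega^\mu_\varepsilon)}$, then $\sigma(A^\mu_{\varepsilon,s}) \cap [\lambda-\eta,\lambda+\eta] \ne \emptyset$. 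The goal is to realize this with $\eta = C\sqrt{\varepsilon}$.

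Quasi-mode construction. The ladder $\Omega^\mu_\varepsilon$ naturally decomposes into thin rectangular \emph{strips} running along each edge of $\mathcal{G}$ (of width $\varepsilon$, respectively $\mu\varepsilon$ for the perturbed rung $e_0$) and small \emph{junction boxes} of size $\sim\varepsilon\times\varepsilon$ located near each vertex $M_j^\pm$. On every strip associated to an edge $e$, I set $u_\varepsilon$ equal to the longitudinal edge restriction $u_e$ of $u$ (constant in the transverse direction): this automatically satisfies the Neumann condition on the lateral boundaries of the strip, and, thanks to the edge equation $-u_e''=\lambda u_e$, produces \emph{zero} residual there. In each junction box I define $u_\varepsilon$ by a local corrector matching the (three or four) adjacent strip values on the non-solid boundary pieces while preserving the Neumann condition on the solid part and ensuring the global $H^2_\Delta$ regularity required by $D(A^\mu_{\varepsilon,s})$. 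The Kirchhoff relation~(\ref{Kirchoff_def}) satisfied by $u$ is exactly the linear compatibility that makes this local correction consistent at leading order.

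Estimates. By construction the residual $(A^\mu_{\varepsilon,s}-\lambda)u_\varepsilon$ is supported in the union of the junction boxes. Inside each box the corrector, $\nabla u_\varepsilon$ and $\Delta u_\varepsilon$ are pointwise controlled by the vertex value $|\mathbf{u}_j^\pm|$, so the $L_2$-contribution of the residual on a single box is of order $\varepsilon \, |\mathbf{u}_j^\pm|$. The explicit finite-difference analysis of Section~\ref{SubsectionSprectreDiscretAsmu} gives the exponential decay $\mathbf{u}_j^\pm = A \, r(\omega)^{|j|}$ with $|r(\omega)|<1$ (see~(\ref{eigenfun_sym})), so summing over all junctions yields
\[
\big\|(A^\mu_{\varepsilon,s}-\lambda)\, u_\varepsilon\big\|_{L_2(\Omega^\mu_\varepsilon)} \;\le\; C\,\varepsilon.
\]
On the other hand, the transverse scaling of the strips gives
\[
\|u_\varepsilon\|^2_{L_2(\Omega^\mu_\varepsilon)} \;=\; \varepsilon \, \|u\|^2_{L_2^\mu(\mathcal{G})} \;+\; O(\varepsilon^2) \;\ge\; c\,\varepsilon,
\]
so that the ratio is bounded by $C\sqrt{\varepsilon}$. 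Applying the criterion of~\cite{MR2766589} produces a spectral value $\lambda_\varepsilon \in \sigma(A^\mu_{\varepsilon,s})$ with $|\lambda_\varepsilon - \lambda| \le C\sqrt{\varepsilon}$. Theorem~\ref{thmKuchment} then gives, for $\varepsilon$ small, a gap $(a_\varepsilon,b_\varepsilon)$ of $\sigma_{ess}(A^\mu_{\varepsilon,s})$ with $a_\varepsilon = a + O(\varepsilon)$, $b_\varepsilon = b + O(\varepsilon)$; since $\lambda \in (a,b)$ strictly, the interval $[\lambda - C\sqrt{\varepsilon}, \lambda + C\sqrt{\varepsilon}]$ lies in $(a_\varepsilon,b_\varepsilon)$ for $\varepsilon$ small enough, which forces $\lambda_\varepsilon$ to be an isolated eigenvalue of finite multiplicity. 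The antisymmetric case is identical.

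Main obstacle. The delicate step is the construction of the junction-box corrector. It must simultaneously (i) glue the strip values in an $H^2$, Neumann-compatible way so that $u_\varepsilon$ genuinely belongs to $D(A^\mu_{\varepsilon,s})$; (ii) yield a residual whose pointwise size does \emph{not} blow up as $\varepsilon\to 0$, which would ruin the $O(\varepsilon)$ bound above (a naive cutoff-based gluing produces a residual of order $1/\varepsilon$ from $\Delta \chi$ terms and only gives $O(1/\sqrt\varepsilon)$, too large); and (iii) exploit the Kirchhoff condition of $\mathcal{A}^\mu_s$ as the precise compatibility that cancels the would-be leading-order residual at each vertex. The combination of this junction construction with the exponential decay $r(\omega)^{|j|}$ provided by the finite-difference reduction, which is what makes the sum over the infinitely many junctions of $\mathcal{G}$ summable, is the heart of the proof.
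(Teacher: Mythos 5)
Your overall strategy coincides with the paper's (quasi-mode built by fattening the graph eigenfunction, a self-adjoint approximate-eigenvalue criterion in the spirit of Lemma~4 of~\cite{MR2766589}, exponential decay of $\mathbf{u}_j=A\,r(\omega)^{|j|}$ from~\eqref{eigenfun_sym} to make the sum over junctions converge, the normalization $\|u_\varepsilon\|^2_{L_2}\sim\varepsilon$ producing the $\sqrt{\varepsilon}$ rate, and Theorem~\ref{thmKuchment} to place the resulting spectral value inside a genuine gap). But there is a real gap at exactly the point you flag as the ``main obstacle'': you require a quasi-mode in $D(A^{\mu}_{\varepsilon,s})$, i.e.\ an $H^2_\Delta$, Neumann-compatible function, and the entire burden of the proof then falls on the junction-box corrector, whose existence with the claimed properties (global $H^2_\Delta$ regularity, exact Neumann condition on the solid boundary, matching of the three or four incoming strip traces \emph{and} of their normal derivatives $u_e'(M)\neq 0$, with a residual of size $O(\varepsilon\,|\mathbf{u}_j|)$ in $L_2$ of each box) is asserted but never constructed. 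This is not a routine verification: the prescribed normal derivatives on the strip/box interfaces are $O(1)$ constants whose only relation is the weighted Kirchhoff sum~\eqref{Kirchoff_def}, and turning that single scalar compatibility into a genuine local Neumann problem with controlled Laplacian on an $\varepsilon\times\varepsilon$ box, uniformly in $j$ and $\varepsilon$, is precisely the hard analysis you would need to supply. As written, the estimate $\|(A^{\mu}_{\varepsilon,s}-\lambda)u_\varepsilon\|_{L_2}\le C\varepsilon$ rests on an unproven claim.

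The paper avoids this difficulty entirely by working with the \emph{form} version of the criterion: the pseudo-mode is only required to lie in $H^1(\Omega_\varepsilon^{\mu})$ and to satisfy the variational estimate~\eqref{est_pseudomode} against all symmetric test functions $v$ (see Appendix A of~\cite{RR}). This permits the crude construction in which $u_\varepsilon$ is simply the constant $\mathbf{u}_j$ on each junction box and the rescaled edge profile on each strip; such a function is continuous hence $H^1$, but its normal derivatives jump across the strip/box interfaces, so it is nowhere near $D(A^{\mu}_{\varepsilon,s})$. The price is that after integrating by parts on each strip one must control the interface terms $\int_{\Gamma}\partial_n u_\varepsilon\,v$; this is where the Kirchhoff condition enters, allowing $v$ to be replaced by $v-v(M_j^{\varepsilon,-})$, which is then bounded by $C\varepsilon\|v\|_{H^1(\mathcal{K}_j^{\varepsilon,-})}$ by a Poincar\'e-type argument. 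If you want to keep your strong ($D(A)$) formulation you must actually build and estimate the corrector; otherwise you should switch to the weak formulation, where your strip construction and decay argument go through essentially as the paper's do.
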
 
 
\begin{rem}\label{rem:conv}
As every eigenvalue of the operators $\mathcal{A}^{\mu}_s$ (resp. $\mathcal{A}^{\mu}_{a}$) is simple (as established in Theorem \ref{existence_sym}), for $\varepsilon$ small enough, $\lambda_\varepsilon$ will be a simple eigenvalue of $A^{\mu}_{\varepsilon,s}$ (resp. $A^{\mu}_{\varepsilon,a}$), see \cite{Post:2006}. In addition, it is worth noting that  the error estimate~\eqref{ConvergenceValeurPropre} is suboptimal. In fact,  writing a high order asymptotic expansion of $\lambda^\varepsilon$ restores the optimal convergence rate:
$$
| \lambda_{\varepsilon} - \lambda^{(0)} | \leq C {\varepsilon} 
$$ 
The construction and the justification of this high order asymptotic expansion will be detailed in a forthcoming paper.
\end{rem} 

\begin{rem} \label{remarkDirichletLadder}We point out that imposing Dirichlet conditions leads to an entirely different asymptotic analysis. In~\cite{Grieser2008}, the asymptotic of the eigenvalues is obtained in the case of a compact 'thickened' graph with different types of boundary conditions, including the Dirichlet and Robin ones (see also \cite{panasenko2006} for non standard boundary conditions). Besides,  the Dirichlet ladder is investigated in \cite{NazarovLadder2012}-\cite{NazarovLadder2014}: as in our case,  changing the size of one or several rungs of the ladder can create eigenvalues inside the first gap ( see~\cite[Theorem 8.1]{NazarovLadder2014}). The analysis is deeply linked to the presence of a non empty discrete spectrum for the Laplace problem posed in a T-shape waveguide (cf.~\cite{Nazarov2010Tshape}).    
\end{rem}
Thanks to Theorem \ref{existence_eig}, it is easy to show the existence of as many eigenvalues as one wants.
\begin{corollaire}
	For any number $m\in \N$, it exists $\eps_0$ such that for all $\eps\leq \eps_0$, $A_{\varepsilon,s}^\mu$  (respectively $A_{\varepsilon,a}^\mu$) has at least $m$ eigenvalues.
	\end{corollaire}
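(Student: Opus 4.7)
The plan is to combine the infinite count of eigenvalues of the limit operator (Theorem~\ref{existence_sym}) with the persistence-of-eigenvalue result of Theorem~\ref{existence_eig}. First, I would use Proposition~\ref{result_sym}, which states that $\mathcal{A}_s$ (and hence $\mathcal{A}_s^\mu$, by Proposition~\ref{proposition_spectre}) has infinitely many gaps whose ends tend to infinity, to select $m$ distinct gaps $(a_1,b_1),\dots,(a_m,b_m)$ of $\mathcal{A}_s^\mu$. Since we are in the case $0<\mu<1$ where eigenvalues exist, Theorem~\ref{existence_sym} supplies, in each such gap, an eigenvalue $\lambda_k \in (a_k,b_k)$ of $\mathcal{A}_s^\mu$.

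Next, I would apply Theorem~\ref{existence_eig} separately to each $\lambda_k$: for every $k\in\{1,\dots,m\}$ it yields some $\varepsilon_k>0$ and a constant $C_k>0$ such that, for $\varepsilon<\varepsilon_k$, the operator $A^\mu_{\varepsilon,s}$ has an eigenvalue $\lambda_{\varepsilon,k}$ satisfying $|\lambda_{\varepsilon,k}-\lambda_k|\leq C_k\sqrt{\varepsilon}$. Taking $\tilde\varepsilon_0=\min_{1\le k\le m}\varepsilon_k$ then ensures the simultaneous existence of all $m$ eigenvalues $\lambda_{\varepsilon,1},\dots,\lambda_{\varepsilon,m}$ as soon as $\varepsilon<\tilde\varepsilon_0$.

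The only point needing a small verification is that these eigenvalues are actually distinct. Since the $\lambda_k$ lie in pairwise disjoint gaps of $\mathcal{A}_s^\mu$, there is a minimal distance $\delta := \tfrac{1}{2}\min_{k\ne k'}|\lambda_k-\lambda_{k'}|>0$. Shrinking $\tilde\varepsilon_0$ to some $\varepsilon_0$ with $(\max_k C_k)\sqrt{\varepsilon_0}<\delta/2$ forces the intervals $[\lambda_k-C_k\sqrt{\varepsilon},\lambda_k+C_k\sqrt{\varepsilon}]$ to be disjoint, and hence, via \eqref{ConvergenceValeurPropre}, the $\lambda_{\varepsilon,k}$ are pairwise distinct. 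This yields at least $m$ eigenvalues of $A^\mu_{\varepsilon,s}$ for $\varepsilon\leq \varepsilon_0$, and the same reasoning applied to the antisymmetric operator, using the analogous existence theorem from Section~\ref{SectionSpectreAntisymetrique}, handles $A^\mu_{\varepsilon,a}$. I do not anticipate a serious obstacle: all the substantive work has already been done in Theorems~\ref{existence_sym} and~\ref{existence_eig}, and the corollary is essentially a bookkeeping argument about selecting finitely many gaps and taking a minimum over finitely many thresholds $\varepsilon_k$.
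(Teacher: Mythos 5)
Your proposal is correct and follows exactly the route the paper intends: the paper gives no written proof of this corollary beyond the remark that it follows easily from Theorem~\ref{existence_eig}, and the argument it has in mind is precisely yours (select $m$ gaps via Propositions~\ref{result_sym} and~\ref{proposition_spectre}, take one eigenvalue of $\mathcal{A}_s^\mu$ per gap via Theorem~\ref{existence_sym}, apply Theorem~\ref{existence_eig} to each, and take the minimum of the finitely many thresholds). Your extra care about distinctness of the resulting eigenvalues is a sensible addition that the paper leaves implicit.
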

	In the next section, we give the constructive proof and in Section \ref{sub:numerical_illustration}, we illustrate these theoretical results by numerical illustrations.
\noindent 
\subsection{Proof of Theorem~\ref{existence_eig}}
Our proof of Theorem \ref{existence_eig} relies on the construction of a pseudo-mode,
that is to say a symmetric function $u_{\varepsilon}\in H^1(\Omega_{\varepsilon}^{\mu})$ such that for every symmetric function $v\in H^1(\Omega_{\varepsilon}^{\mu})$
\begin{equation}
\label{est_pseudomode}
 \Big|\int\limits_{\Omega_{\varepsilon}^{\mu}}\left(\nabla u_{\varepsilon}\nabla v-\lambda u_{\varepsilon}v\right)\,d{\bf x}\Big|\leqslant C \, \sqrt{\varepsilon} \,\|u_{\varepsilon}\|_{H^1(\Omega_{\varepsilon}^{\mu})}\|v\|_{H^1(\Omega_{\varepsilon}^{\mu})},
 \end{equation}
By adapting the Lemma 4 for \cite{MR2766589} (see Appendix A in \cite{RR}) the existence of such a function provides an estimate of the
 distance from $\lambda$ to the spectrum of $A^{\mu}_{\varepsilon,s}$, namely
 \begin{equation}
 \label{dist_spectrum}
 dist(\sigma(A^{\mu}_{\varepsilon,s}),\lambda)\leqslant \widetilde{C} \sqrt{\varepsilon},
 \end{equation}
 with some constant $\,\widetilde{C}\,$ that does not depend on
 $\,\varepsilon$, but depends on $\lambda$.\\\\
 According to Theorem \ref{thmKuchment}, for $\varepsilon$
 small enough, there exists a constant $C$ such that
 $\,\sigma_{ess}(A^{\mu}_{\varepsilon,s})\cap
 [a+C\varepsilon,\,b-C\varepsilon]=\emptyset\,$. As a consequence, the
 intersection between the discrete spectrum
 $\sigma_{d}(A^{\mu}_{\varepsilon,s})$ and the interval $ [\lambda-~\widetilde{C}\sqrt{\varepsilon},\,\lambda+\widetilde{C}\sqrt{\varepsilon}]$
 is non empty, which proves the existence of an eigenvalue in the neighborhood of $\lambda$.  
 
 \subsubsection{Construction of a pseudo-mode}
Since we consider the symmetric case, it suffices to construct the pseudo-mode $u_{\eps}$ 
on the lower half part ${\Omega}_{\varepsilon}^{\mu,-} $
of $\Omega_{\varepsilon}^{\mu}$ (comb shape domain, see Figure~\ref{bande_notations}):
$$
{\Omega}_{\varepsilon}^{\mu,-} = \left\{ (x,y) \in \Omega_{\varepsilon}^{\mu} \; \mbox{ \;s.t. } \;
  y < 0\right\}.
$$
\begin{figure}[htbp]
\begin{center}
\includegraphics[scale=0.4, trim=0cm 8.5cm 0cm 8cm, clip]{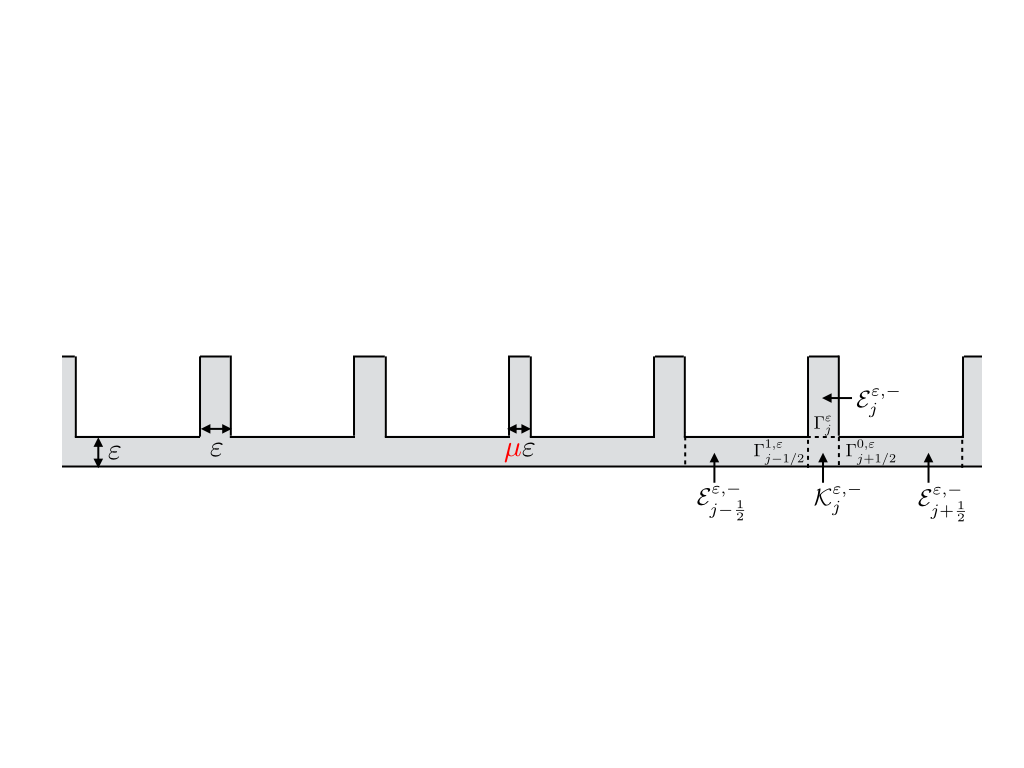}
\caption{\label{bande_notations}The domain ${\Omega}_{\varepsilon}^{\mu,-} $}
\end{center}
\end{figure}

\noindent As represented on Figure~\ref{bande_notations}, we denote by $\mathcal{E}_{j+\frac{1}{2}
 }^{  \varepsilon,-} $, $j\in\mathbb{Z}$, the horizontal
edges of the domain $\Omega_{\varepsilon}^{\mu,-}$
\begin{align*}
 \mathcal{E}_{j+\frac{1}{2}
}^{  \varepsilon,-} = \left(j+\varepsilon \mu_j/2, (j+1) -
\varepsilon \mu_{j+1}/2\right)\times \left( -L/2, -L/2+
\varepsilon\right),
\end{align*}
by $\mathcal{E}_{j}^{  \varepsilon,-}$, $j\in\mathbb{Z}$, its
vertical edges
\begin{equation*}
\mathcal{E}_{j
}^{ \varepsilon,-}  = \left(j- \varepsilon \mu_j/2, j+ \varepsilon
\mu_j/2\right) \times \left( -L/2 + \varepsilon,
0\right),
\end{equation*}
and by $\mathcal{K}_j^{\varepsilon,-}$, $j\in\mathbb{Z}$, the junctions
\begin{align*}
& \mathcal{K}_j^{\varepsilon,-} = \left(j- \varepsilon \mu_j/2, j+ \varepsilon
\mu_j/2\right) \times \left( -L/2,
-L/2 + \varepsilon\right). 
\end{align*}
Here,  $\mu_j=1$ if $j\neq 0$ and $\mu_0=\mu$ (with the notations of Section \ref{SectionDefinitionNotationAmu} $\mu_j=w^\mu(e_j)$, the function $w^\mu$ being defined by \eqref{eq:w_mu}).\\

\noindent Denoting by $u$
an eigenfunction  of the limite operator $\mathcal{A}^{\mu}_s$ associated with the
eigenvalue $\lambda$ { (see formula~\eqref{uj1_def})},  we construct the pseudo-mode $u_{\varepsilon}$ on $\Omega_{\varepsilon}^{\mu,-}$ by "fattening" $u$ (with an appropriate rescaling) as follows:
\begin{equation*}
u_{\varepsilon}(x,y)=\left\{
\begin{array}{lll}
u_{j+\frac{1}{2}}(s_{j+1/2}^\varepsilon(x)),\quad&(x,y)\in\mathcal{E}_{j+\frac{1}{2}}^{\varepsilon,-},\\
u_{j}(t^\varepsilon(y)),&(x,y)\in\mathcal{E}_{j}^{\varepsilon,-},\\
\textbf{u}_{j},&(x,y)\in \mathcal{K}_{j}^{\varepsilon,-}.
\end{array}
\right.
\end{equation*}
Here the rescaling functions $s_{j+1/2}^\varepsilon$ and
$t^\varepsilon$ are linear functions given by the relations  
\begin{equation}
\label{change_var}
s_{j+1/2}^\varepsilon(x)=\frac{x-j-w^{\mu}(e_j)\varepsilon/2}{1-\left(w^{\mu}(j)+w^{\mu}(e_{j+1})\right)\varepsilon/2},
\qquad t^\varepsilon(y)=\frac{y}{1-2\varepsilon/L}.
\end{equation}
{\noindent We remark that the function $u_\varepsilon \in H^1(\Omega_\varepsilon^{\mu, -})$ since the function $u$ is continuous at the vertices of the graph.}
\subsubsection{Proof of Estimate~(\ref{est_pseudomode})}
The pseudo-mode being constructed, it remains to prove~\eqref{est_pseudomode}. We notice that it is sufficient to prove it for any test function
$v\in \mathcal{C}^1_s(\overline{\Omega_{\varepsilon}^{\mu}})$ ($\mathcal{C}^1_s$ standing
for the symmetric subspace of $\mathcal{C}^1$). Indeed, 
$\mathcal{C}^1_s(\overline{\Omega_{\varepsilon}^{\mu}})$ is dense in the subset of $H^1({\Omega_{\varepsilon}^{\mu}})$ made of symmetric functions. Let us then estimate
the left-hand side of (\ref{est_pseudomode}) for $v\in
\mathcal{C}^1_s(\overline{\Omega_{\varepsilon}^{\mu}})$.  
{ First, an integration by parts gives
\begin{multline}
\int_{\mathcal{E}_{j-1/2}^{\varepsilon, -}} \nabla u_{\varepsilon}\nabla v-\lambda u_{\varepsilon}v\,d{\bf x}  =  \int_{\mathcal{E}_{j-1/2}^{\varepsilon, -}}  \underbrace{\left( {u_{j-1/2}}''(s^\varepsilon_{j-1/2} (x)) -  \lambda  u_{j-1/2}(s^\varepsilon_{j-1/2} (x)) \right)}_{=0} v dx \\ +  \int_{\mathcal{E}_{j-1/2}^{\varepsilon, -}}   \left( (s^\varepsilon_{j-1/2})'(x) -1\right) {u_{j-1/2}}''(s^\varepsilon_{j-1/2} (x))  v dx  + \int_{\Gamma_{j-1/2}^{1,\varepsilon} \cup {\Gamma_{j-1/2}^{0,\varepsilon}}} \partial_n   u_{\varepsilon}  v dx \\
=  \lambda  \int_{\mathcal{E}_{j-1/2}^{\varepsilon, -}}   \left( (s^\varepsilon_{j-1/2})'(x) -1\right) {u_{j-1/2}}(s^\varepsilon_{j-1/2} (x))  v dx  + \int_{\Gamma_{j-1/2}^{1,\varepsilon} \cup {\Gamma_{j-1/2}^{0,\varepsilon}}} \partial_n   u_{\varepsilon}  v dx,
\end{multline} 
where, for any $j \in \mathbb{Z}$,
$\,\Gamma_{j-\frac{1}{2}}^{0,\varepsilon}=\partial\mathcal{E}_{j-\frac{1}{2}}^{\varepsilon,-}\cap
\partial \mathcal{K}_{j-1}^{\varepsilon,-}$,
$\,\Gamma_{j-\frac{1}{2}}^{1,\varepsilon}=\partial\mathcal{E}_{j-\frac{1}{2}}^{\varepsilon,-}\cap
\partial \mathcal{K}_j^{\varepsilon,-}$ (see Figure~\ref{bande_notations}). Here, we used the fact that $s^\varepsilon_{j-1/2}$  is a linear function of $x$. Similarly, 
\begin{multline}
\int_{\mathcal{E}_{j}^{\varepsilon, -}} \nabla u_{\varepsilon}\nabla v-\lambda u_{\varepsilon}v\,d{\bf x}  
=  \lambda  \int_{\mathcal{E}_{j}^{\varepsilon, -}}   \left( (t^\varepsilon_{j})'(x) -1\right) {u_{j-1/2}}(t^\varepsilon_{j} (x))  v dx  + \int_{\Gamma_{j}^{\varepsilon} } \partial_n   u_{\varepsilon}  v dx \
\end{multline} 
where $\Gamma_{j}^{\varepsilon}=\partial\mathcal{E}_{j}^{\varepsilon}\cap
\mathcal{K}_j^{\varepsilon,-}$. Summing over $j \in \Z$ (noting that $|(s^\varepsilon_{j-1/2})'(x) -1 |$ and $| (t^\varepsilon)' (y)-1 |$ are of order $\varepsilon$),  we obtain}
\begin{multline}
\label{proof_pseudomode}
 \Big|\int\limits_{\Omega_{\varepsilon}^{\mu,-}}\left(\nabla u_{\varepsilon}\nabla v-\lambda u_{\varepsilon}v\right)\,d{\bf x}\Big| 
 \leqslant \sum\limits_{j\in\mathbb{Z}}\lambda \textbf{u}_{j}\Big|\int\limits_{\mathcal{K}_{j}^{\varepsilon,-}}v\,d{\bf x}\Big|+\lambda\|u_{\varepsilon}\|_{L_2(\Omega_{\varepsilon}^{\mu,-})}\|v\|_{L_2(\Omega_{\varepsilon}^{\mu,-})}O(\varepsilon)\\
+\sum\limits_{j\in\mathbb{Z}}\Big(-u'_{j+\frac{1}{2}}(0)\int\limits_{\Gamma_{j+\frac{1}{2}}^{0,\varepsilon}}v(x,y)dy+u'_{j-\frac{1}{2}}(1)\int\limits_{\Gamma_{j-\frac{1}{2}}^{1,\varepsilon}}v(x,y)dy-u'_{j}\left(-\textstyle\frac{L}{2}\right)\int\limits_{\Gamma_{j}^{\varepsilon}}v(x,y)dx\Big)\left(1+O(\varepsilon)\right),
\end{multline}
Since $\textbf{u}_{j}$ is
a geometrical progression (according
to~\eqref{eigenfun_sym}), and using the Cauchy Schwarz inequality (the size of the junction
$\mathcal{K}_j^{\varepsilon,-}$ is of 
order $\varepsilon^2$), we obtain
\begin{equation}
\label{proof_pseudomode1}
\sum\limits_{j\in\mathbb{Z}}\lambda \textbf{u}_{j}\Big|\int\limits_{\mathcal{K}_{j}^{\varepsilon,-}}v\,d{\bf x}\Big|\leqslant C_1\varepsilon\|v\|_{L_2(\Omega_{\varepsilon}^{\mu,-})}.
\end{equation}
where $C_1$ is a constant that does not depend on $\varepsilon$.
{ Next, denoting by $M^{\varepsilon,-}_j$ the barycenter of $\mathcal{K}_{j}^{\varepsilon, -}$, we remark that we can replace $v(x,y)$ by $v(x,y)-v\left(M^{\varepsilon,-}_j\right)$ in the integrals over the boundaries in the right-hand side of (\ref{proof_pseudomode}) because $u$  satisfies the Kirchhoff's
conditions~\eqref{Kirchoff_def}}. Moreover, 
\begin{equation}
\label{proof_pseudomode2}
\Big|\int\limits_{\Gamma_{j}^{\varepsilon}}\left(v(x,y)-v\left(M^{\varepsilon,-}_j\right)\right)\,dx\Big|\leqslant \int\limits_{\Gamma_{j}^{\varepsilon}}\int\limits_{M_j^{\varepsilon,-}}^{(x,y)}|\nabla v|\,dxdt\leqslant C_2\varepsilon\|v\|_{H^1(\mathcal{K}_j^{\varepsilon,-})}.
\end{equation}
{ In the previous formula, $\int\limits_{M_j^{\varepsilon,-}}^{(x,y)}$  stands for the integral on the segment linking $M_j^{\varepsilon,-}$ to the point of coordinates $(x,y)$.}
Combining (\ref{proof_pseudomode}-\ref{proof_pseudomode1}-\ref{proof_pseudomode2}) and 
taking into account (\ref{uj_def}-\ref{uj1_def}-\ref{eigenfun_sym}), we
obtain that
\begin{equation}
\label{proof_pseudomode3}
\Big|\int\limits_{\Omega_{\varepsilon}^{\mu,-}}\left(\nabla u_{\varepsilon}\nabla v-\lambda u_{\varepsilon}v\right)d{\bf x}\Big|\leqslant C_3\varepsilon\|v\|_{H^1(\Omega_{\varepsilon}^{\mu,-})},\qquad\forall v\in \mathcal{C}^1_s(\Omega_{\varepsilon}^{\mu}).
\end{equation}
To conclude, we notice that by definition of the pseudo-mode $u_{\varepsilon}$,
\begin{equation*}
\|u_{\varepsilon}\|_{H^1(\Omega_{\varepsilon}^{\mu,-})}\geqslant C_4\sqrt{\varepsilon}\|u\|_{H^1(\mathcal{G}^-)},\qquad C_4>0,
\end{equation*}
($\mathcal{G}^-$ standing for the lower half part of the graph $\mathcal{G}$),  which, together with (\ref{proof_pseudomode3}) and the density argument mentioned above finishes the proof of (\ref{est_pseudomode}).
\subsection{Numerical illustration} 
\label{sub:numerical_illustration}
To illustrate and validate the results of Theorem \ref{existence_eig}, we have computed a part of the essential spectrum, some eigenvalues and their associated eigenvectors of the operator $A_{\eps,s}^\mu$ for several values of $\eps$ and several values of $\mu$. 
\subsubsection*{Essential spectrum}
To compute the essential spectrum of the operator $A_{\eps,s}^\mu$ (resp. $\mathcal{A}_s$), one method consists in computing the eigenvalues $\lambda^{(n)}_\eps(\theta) = (\omega^{(n)}_\eps(\theta))^2$ defined in \eqref{eigs} (resp. $\lambda^{(n)}_s(\theta)=(\omega^{(n)}_s(\theta))^2$) defined in \eqref{eig}) for a discrete set of $\theta$ included in $[-\pi,\pi[$ (or equivalently in $[-\pi,0]$). {From a numerical point of view, this is done using the standard $P_1$ conform finite element method (\cite{GiraultRaviart}). }We have represented in Figure~\ref{fig:dispersive_curves} the dispersive curves $\theta\mapsto \omega^{(n)}_\eps(\theta)$ for $n\in\llbracket 1,5\rrbracket$, when $L=2$ and $\eps=0.1$ (left figure), and $\theta\mapsto\omega^{(n)}_s(\theta)$ for $n\in\llbracket 1,5\rrbracket$, corresponding to the graph with the same $L$ (right figure). 
\begin{figure}[htbp]
\begin{center}
\includegraphics[scale=0.3, trim = 0cm 0cm 0cm 1.5cm, clip]{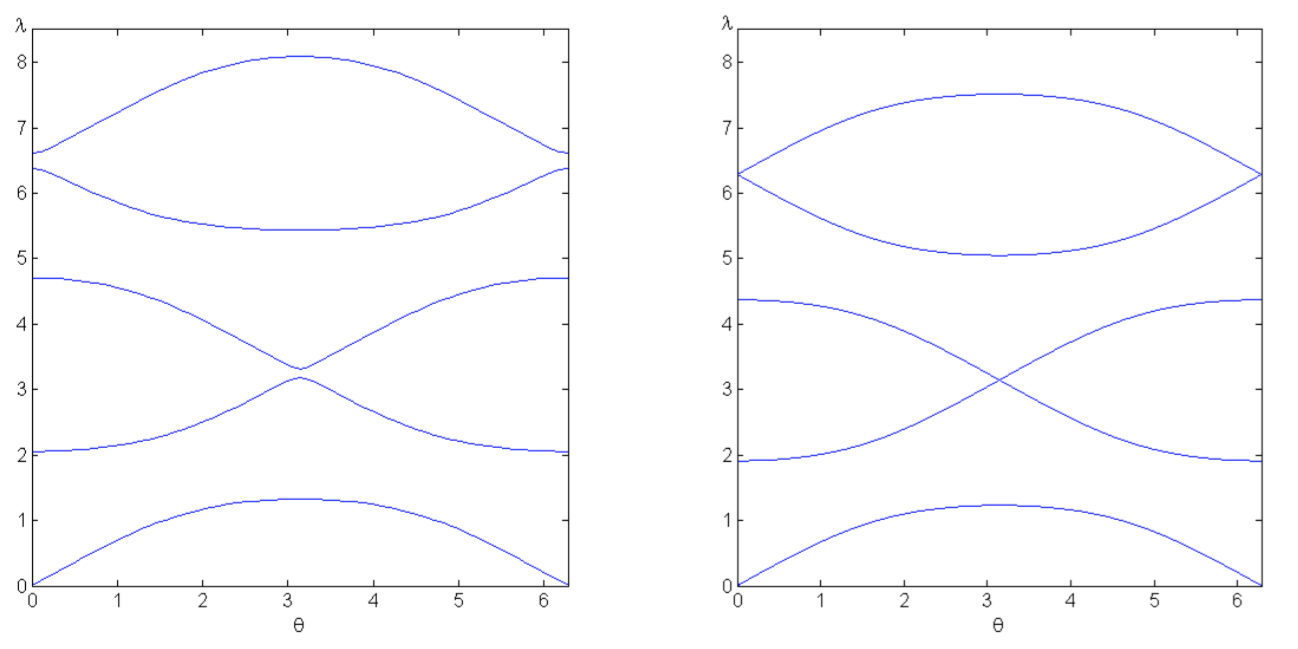}
\caption{\label{fig:dispersive_curves} Dispersive curves for the ladder  of thickness $\eps=0.1$ (left figure) and for the graph (right figure) when $L=2$.}
\end{center}
\end{figure}
The essential spectrum can be easily deduced from the dispersion curves: indeed, as explained by the Floquet-Bloch theory,  it is the image of the segment $([-\pi,\pi])$ by the functions $\lambda^{(n)}_\eps$ (resp. $\lambda^{(n)}_s$). In Figure~\ref{fig:essential_spectrum}, we have represented a part of the essential spectrum of the operator $A_{\eps,s}^\mu$ for different values of $\eps$: the blue bars correspond to the values $\omega$ such that $\lambda = \omega^2$ is in the essential spectrum of $A_{\eps,s}^\mu$.
\begin{figure}[htbp]
\begin{center}
\includegraphics[scale=0.2, trim = 0cm 0cm 0cm 1.8cm, clip]{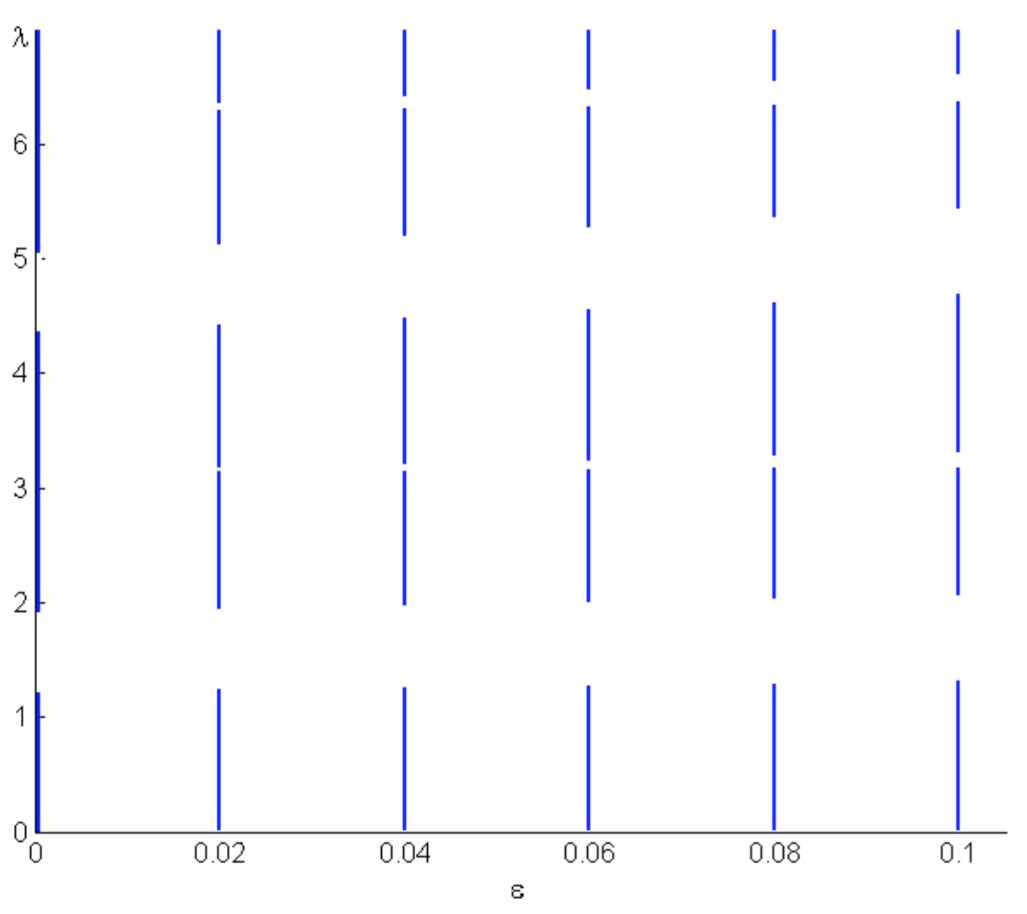}
\caption{\label{fig:essential_spectrum} Representation of the essential spectrum of $A_{\eps,s}^\mu$ for different values of $\eps$ for $L=2$}
\end{center}
\end{figure}
Obviously, for small values of $\eps$, the essential spectrum of the operator $A_{\eps,s}^\mu$ is very close to the essential spectrum of the limit operator $\mathcal{A}_s^\mu$. More precisely,  to each gap of the limit operator $\mathcal{A}_s^\mu$, corresponds a gap of the operator $A_{\eps,s}^\mu$, which is close to it for small $\eps$ . The convergence with respect to $\eps$ is linear as it is predicted by the theory, see Theorem~\ref{thmKuchment}. We also remark a phenomenon that has not been detected by our approach: the opening of a gap near the values $\omega = {\pi\N^*}$, points where the dispersion curves of the limit operator $\mathcal{A}_s^\mu$ touch (see right figure of Figure \ref{fig:dispersive_curves}). This phenomenon could be probably proven using the techniques of \cite{MR2766589}.
\\\\
Another interesting phenomenon concerns the eigenvalues of infinite multiplicity for the limit operator. As explained in Remark \ref{rem:Qc}, the operator $\mathcal{A}_s^\mu$ might have eigenvalues of infinite multiplicity when $L$ is rational. For instance in the case of $L=0.5$, the set of eigenvalues of infinite multiplicity is given by
\[
	\{\lambda = \omega^2,\;\omega=2(2n+1)\pi,\;n\in\N\}.
\]
We can predict that such an eigenvalue becomes a (small) spectral band in the 2D case for $\eps$ small enough. Indeed, as shown in \cite{Post:2006}, the dimension of the spectral projector on any interval is preserved for $\eps$ small enough. On the other hand, in most cases, a periodic 2D operator does not have eigenvalues (see for instance \cite{Sobolev:2002,Suslina:2002,friedlander:2003} for the proof of the absolute continuity of the spectrum of classes of periodic operator defined in waveguides, but see also the counterexample \cite{Filonov2001Absolute}). Thus the most likely possibility is that the operator $A^{\mu}_{\eps,s}$ has a small spectral band {(of width $O(\varepsilon)$, see Theorem~\ref{thmKuchment})} in a neighborhood of the eigenvalues of infinite multiplicity. This phenomenon can be seen on Figure~\ref{fig:infmult} where  the essential spectrum of $A^{\mu}_{\eps,s}$ is represented for different values of $\eps$ and for $L=0.5$. A small spectral band appears in the vicinity of $\omega = 2 \pi$, corresponding to the first eigenvalue of infinite multiplicity of the limit operator.
\begin{figure}[htbp]
\begin{center}
\includegraphics[scale=0.35,trim = 0cm 0cm 0cm 2cm, clip]{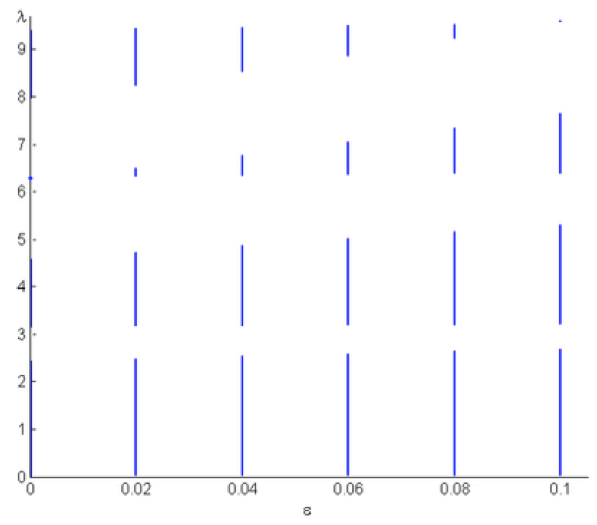}
\caption{\label{fig:infmult} Representation of the essential spectrum of $A_{\eps,s}^\mu$ for different values of $\eps$ for $L=0.5$}
\end{center}
\end{figure}

\subsubsection*{Discrete spectrum}
It is less easy to compute the discrete spectrum because one has to solve an eigenvalue problem set on an unbounded domain. To address this difficulty, we have used a method based on the construction of Dirichlet-to-Neumann operators in periodic waveguides (see \cite{ Fliss:2006,Fliss:2009}): { this requires the solution of cell problems (discretized here again using  the standard $P_1$ finite element methods) and the solution of a stationary Ricatti equation.} The construction of these Dirichlet-to-Neumann operators  enables us to reduce the numerical computation to a small neighborhood of the perturbation independently from the confinement of the mode (which depends on the distance between the eigenvalue and the essential spectrum of the operator). However the reduction of the problem leads to a non linear eigenvalue problem (since the DtN operators depend on the eigenvalue) of a fixed point nature. { It is solved using a Newton-type procedure, each iteration needing a finite element computation, }see \cite{SoniaNonLineraire} for more details. 
\begin{figure}[htbp]
\begin{center}
\includegraphics[scale=0.35,trim = 0cm 0cm 0cm 1.8cm, clip]{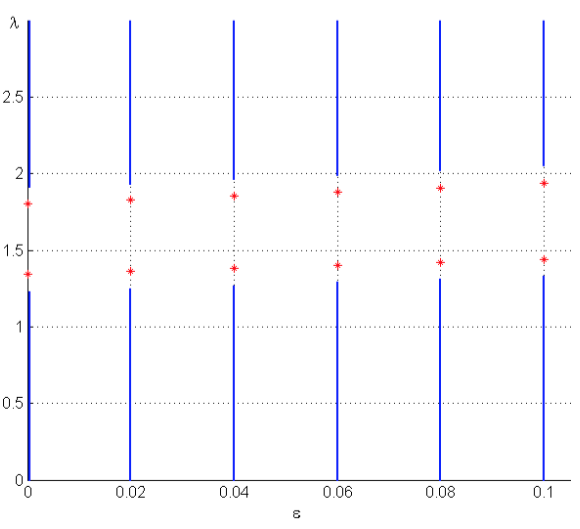}
\caption{\label{fig:eigenvalues1} Representation of the eigenvalues appearing in the first gap of the operator $A_{\eps,s}^\mu$ for different values of $\eps$ for $L=2$ and $\mu=0.25$ (red asterisks). The values for $\eps=0$ correspond to the limit operator $A_s^\mu$.}
\end{center}
\end{figure}
In Figure~\ref{fig:eigenvalues1}, we represent the eigenvalues computed for different values of $\eps$: here again, the blue bars correspond to the values $\omega$ such that $\lambda = \omega^2$ is in the essential spectrum of $A_{\eps,s}^\mu$. The red asterisks stand for the values $\omega$ such that $\lambda = \omega^2$ is in the discrete spectrum of $A_{\eps,s}^\mu$. In Figure~\ref{fig:eig_conv}, we make a zoom on the eigenvalues and we observe the linear convergence of one of this eigenvalue toward the limit one:  as explained in Remark \ref{rem:conv}, the error estimate~\eqref{ConvergenceValeurPropre} is suboptimal. Indeed, a high order asymptotic expansion of $\lambda^\varepsilon$ would restore the linear convergence rate. In Figure~\ref{fig:eigenvector}, the eigenfunction corresponding to the first eigenvalue of the operator $A_{\eps,s}^\mu$ is represented.
\begin{figure}[htbp]
\begin{center}
\includegraphics[scale=0.35,trim = 0cm 0cm 0cm 1.8cm, clip]{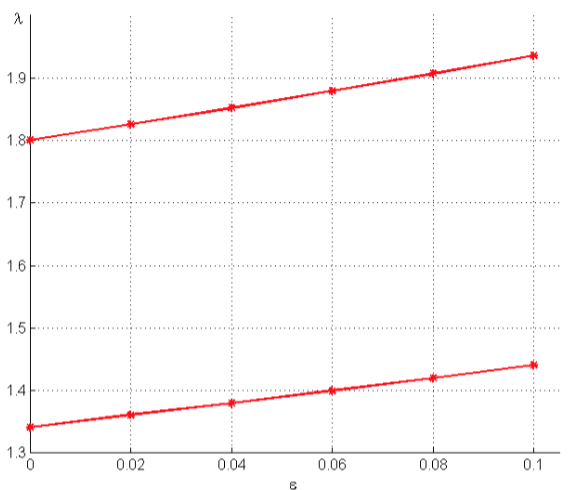}
\caption{\label{fig:eig_conv} Linear convergence of the eigenvalues represented in figure~\ref{fig:eigenvalues1} as $\eps\rightarrow 0$.}
\end{center}
\end{figure}
\begin{figure}[htbp]
\begin{center}
\includegraphics[scale=0.35]{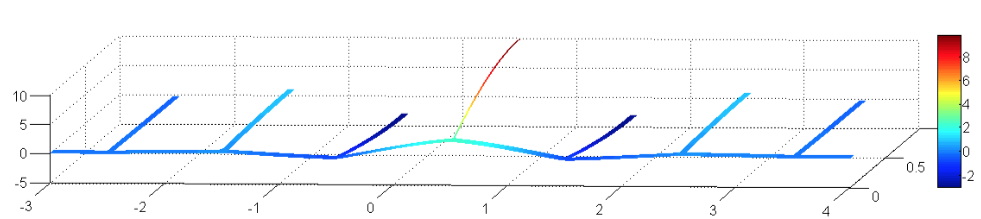}
\caption{\label{fig:eigenvector} Eigenfunction corresponding to the first eigenvalue of the operator $A_{\eps,s}^\mu$ for $L=2$, $\eps=0.06$ and $\mu=0.25$.}
\end{center}
\end{figure}
In Figure~\ref{fig:eigenvalues_mu}, we study the dependence of the eigenvalues with respect to $\mu\in(0,1)$. As it is natural to expect, the smaller $\mu$ is (so, the stronger the perturbation is), the better the eigenvalues are separated from the essential spectrum. When $\mu$ is close to $1$, the computation becomes more costly: since the distance between the eigenvalue and the essential spectrum becomes very small, the mesh size has to be small enough in order to make the distinction between the two different kinds of spectrum.  \begin{figure}[htbp]
\begin{center}
\includegraphics[scale=0.35,trim = 0cm 0cm 0cm 1.5cm, clip]{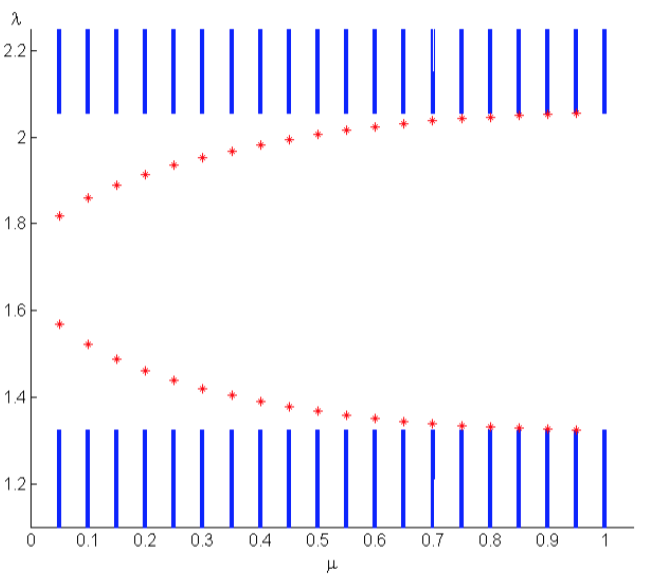}
\caption{\label{fig:eigenvalues_mu} Dependence of the eigenvalues in the first gap with respect to $\mu$ for $L=2$ and $\eps=0.1$.}
\end{center}
\end{figure}
~\\\\
A last natural question for which no theoretical answer has been given yet is what happens for larger values of $\eps$, i.e. when the spectrum of the operator $A_{\eps,s}^\mu$ is not close to the spectrum of the limit operator. In particular, if a gap exists for small values of $\eps$, does it still exist for large values of $\eps$ (until the obstacles disappear)? Similarly, do the eigenvalues still exist when $\eps$ increases or do they immerse into the essential spectrum?
In the cases that we have tested, the gaps seem to keep present for any value of $\eps$ for which the obstacles are present (for $\eps\in(0,\min(1,L/2))$). In Figure~\ref{fig:gap_eps}, we represent the dependence of the first two gaps with respect to $\eps$ in the case $L=2$. The limit case $\eps\rightarrow \min(1,L/2)$ has been studied by S. Nazarov \cite{MR2766589} where opening of a gap is proven when Dirichlet conditions are imposed on the boundary of the periodic waveguide instead of Neumann boundary conditions. The behaviour of the eigenvalues is by contrast more unclear. In Figure~\ref{fig:eig_eps}, we show the eigenvalues in the first gap of the operator $A_{\eps,s}^\mu$ for $L=2$ and $\mu=0.25$. We might think that the eigenvalues disappear for some values of $\eps<1$: however, as previously mentioned, the numerical computation becomes costly when the eigenvalues approach the essential spectrum. For this reason, it is difficult to make the distinction between the case when the eigenvalues do not exist any more and the case when they exist but are very close to the essential spectrum. Moreover, if they really disappear, do they immerse in the essential spectrum? Do they move in the complex plane? { Let us point out that the use of the sophisticated numerical method (based on an automatic choice of the mesh size) presented in \cite{KlindworthSchmidt} might help to answer these questions.}
\begin{figure}[htbp]
\begin{center}
\includegraphics[scale=0.35,trim = 0cm 0cm 0cm 0.9cm, clip]{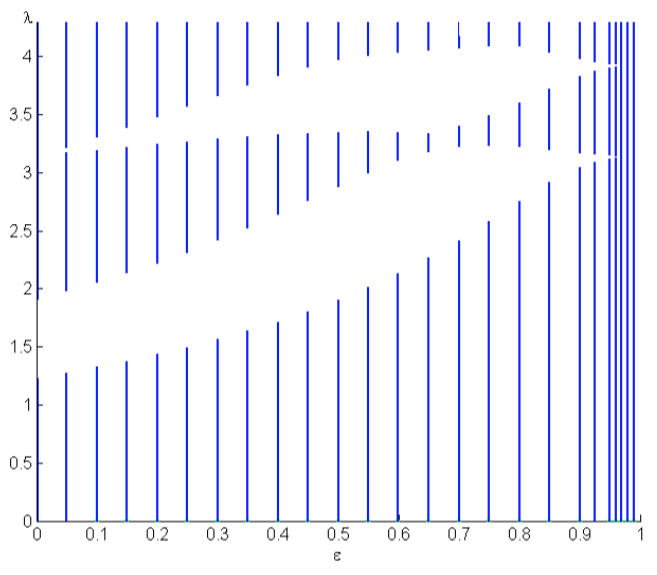}
\caption{\label{fig:gap_eps} Dependence of the first gaps with respect to $\eps<1$ for $L=2$.}
\end{center}
\end{figure}
\begin{figure}[htbp]
\begin{center}
\includegraphics[scale=0.35,trim = 0cm 0cm 0cm 0.9cm, clip]{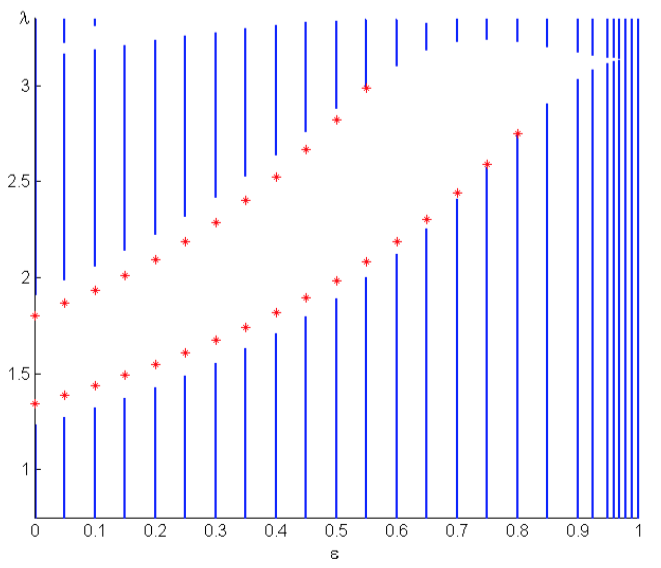}
\caption{\label{fig:eig_eps} Dependence of the eigenvalues in the first gap with respect to $\eps<1$ for $L=2$ and $\mu=0.25$.}
\end{center}
\end{figure}

\bibliographystyle{plain}
\bibliography{biblioFinal}
\end{document}